\numberwithin{equation}{section} \makeatletter
\renewcommand{\subsection}{\@startsection
{subsection}{2}{0mm}{\baselineskip}{-0.25cm}
{\normalfont\normalsize\bf}} \makeatother
\newtheorem{theorem}{Theorem}[section]
\newtheorem{lemma}[theorem]{Lemma}
\newtheorem{definition}[theorem]{Definition}
\newtheorem{remark}[theorem]{Remark}
\newtheorem{proposition}[theorem]{Proposition}
\newtheorem{example}[theorem]{Example}
\newtheorem{assumption}[theorem]{Assumption}
\def \E {\mathcal E}
\def \F {\mathcal F}
\def \H {\mathcal H}
\def \L {\mathcal L}
\def \M {\mathcal M}
\def \P {\mathbf P}
\def \Q {\mathbf Q}
\def \R {\mathbb R}
\def \bF {\mathbb F}
\def \bH {\mathbb H}
\def \bE {\mathbb E}
\newcommand{\ud}{\mathrm d}
\newcommand{\aC}[1]{{\color{red} #1}}
\DeclareMathOperator*{\essinf}{ess\,inf}
\DeclareMathOperator*{\esssup}{ess\,sup}
\DeclarePairedDelimiter{\abs}{\lvert}{\rvert}
\def \a {{(1)}}
\def \b {{(2)}}
\def \i {{(i)}}
\begin{document}

 \author[M.~Brachetta]{Matteo Brachetta}
\address{Matteo Brachetta, Department  of Mathematics,
Politecnico di Milano}\email{matteo.brachetta@polimi.it}

\author[G.~Giorgia]{Giorgia Callegaro}
\address{Giorgia Callegaro, Department of Mathematics, University of
Padova}\email{gcallega@math.unipd.it}

\author[C.~Ceci]{Claudia  Ceci}
\address{Claudia  Ceci, Department of Economic Studies,
University ``G. D'Annunzio'' of Chieti-Pescara, Viale Pindaro, 42,
I-65127 Pescara, Italy.}\email{c.ceci@unich.it}

\author[C.~Sgarra]{Carlo Sgarra}
\address{Carlo Sgarra, Corresponding Author, Department of Mathematics,
 Politecnico di Milano}\email{carlo.sgarra@polimi.it}

\title[Optimal reinsurance via BSDEs in a partially observable model with jump clusters]{Optimal reinsurance via BSDEs in a partially observable model with jump clusters}

\date{\today}

\begin{abstract}
We investigate the optimal reinsurance problem when the loss process exhibits jump clustering features and the insurance company has restricted information about the loss process. We maximize expected exponential utility of terminal wealth and show that an optimal solution exists. By exploiting both the Kushner-Stratonovich and Zakai approaches, we provide the equation governing the dynamics of the (infinite-dimensional) filter and characterize the solution of the stochastic optimization problem in terms of a BSDE, for which we prove existence and uniqueness of solution. After discussing the optimal strategy for a general reinsurance premium, we provide more explicit results in some relevant cases.
\end{abstract}

\maketitle

{\bf Keywords}: Optimal reinsurance; Partial information; Hawkes processes; Cox processes with shot noise; BSDEs; Proportional Reinsurance Premium.

{\bf JEL Classification}: G11, G22, C61.

{\bf AMS Classification}: 60G55, 60J60, 91G05, 91G10, 93E20.


\section{Introduction}

Optimal reinsurance problems have attracted special attention during the past few years and they have been investigated in many different model settings. Insurance companies can hardly deal with all the different sources of risk in the real world, so they hedge against at least part of them, by re-insuring with other institutions. A reinsurance agreement allows the primary insurer to transfer part of the risk to another company and it is well known that this is an effective tool in risk management. Moreover, the subscription of such contracts is required by some financial regulators, see e.g. the Directive Solvency II in the European Union.
Large part of the existing literature focuses mainly on classical reinsurance contracts such as the proportional and the excess-of-loss, which were extensively investigated under a variety of optimization criteria, e.g. ruin probability minimization, dividend optimization and expected utility maximization. Here we are interested in the latter approach (see Irgens and Paulsen \cite{Irgens_Paulsen}, Mania and Santacroce \cite{Mania2010}, Brachetta and Ceci \cite{Brachetta_Ceci_2019} and references therein).
Some of the classical papers devoted to the subject assume a diffusive dynamics for the surplus process, while the more recent literature considers surplus processes including jumps.

The pioneering risk model with jumps in non-life insurance is the classical Cram\'er-Lundberg model, where the claims arrival process is a Poisson process with constant intensity. This assumption implies that the instantaneous probability that an accident occurs is always constant, which is in a way too restrictive in the real world, as already motivated by Grandell \cite{Grandell}.
In recent years, many authors made a great effort to go beyond the classical model formulation. For example, Cox processes were employed to introduce a stochastic intensity for the claims arrival process, see e.g. Albrecher and Asmussen \cite{Albrecher_Asmussen}, Bjork and Grandell \cite{Bjork_Grandell}, Embrechts et al. \cite{Embrechts_Schmidli_Grandell}. Moreover, other authors introduced Hawkes processes in order to capture the self-exciting property of the insurance risk model in presence of catastrophic events. Hawkes processes were introduced by Hawkes \cite{Hawkes_1971} to describe geological phenomena with clustering features like earthquakes. Hawkes processes with general kernels are not Markov processes: they can eventually include long-range dependence, while Hawkes processes with exponential kernel exhibit the appealing property that the couple process-intensity is Markovian; moreover they are affine processes according to the definition provided by Duffie, Filipovic and Schachermayer \cite{Duffie_Filipovic_Schachermayer}.
For the latter literature strand here we mention Stabile and Torrisi \cite{Stabile_Torrisi} and Swishchuk et al. \cite{Swishchuk}.

Dassios and Zhao \cite{Dassios_Zhao_2011} proposed a model which combines the two approaches by introducing a Cox process with shot noise intensity and a Hawkes process with exponential kernel for describing the claim arrival dynamics.
Recently Cao, Landriault and Li \cite{Cao_Landriault_Li} investigated the optimal reinsurance-investment problem in the model setting proposed by Dassios and Zhao \cite{Dassios_Zhao_2011} with a reward function of mean-variance type.

A different line of research related to the optimal-reinsurance investment problem focuses on the possibility that the insurer does not have access to all the information when choosing the reinsurance strategy. As a matter of fact, only the claims arrival and the corresponding disbursements are observable. In this case we need to solve a stochastic optimization problem under partial information. Liang and Bayraktar \cite{Liang_Bayraktar} were the first to introduce a partial information framework in optimal reinsurance problems. They consider the optimal reinsurance and investment problem in an unobservable Markov-modulated compound Poisson risk model, where the intensity and jump size distribution are not known, but have to be inferred from the observations of claim arrivals.
Ceci, Colaneri and Cretarola \cite{Ceci_Colaneri_Cretarola} derive risk-minimizing investment strategies when information available to investors is restricted and they provide optimal hedging strategies for unit-linked life insurance contracts.
Jang, Kim and Lee \cite{Jang_Kim_Lee} present a systematic comparison between optimal reinsurance strategies in complete and partial information framework and quantify the information value in a diffusion setting.

More recently, Brachetta and Ceci \cite{Brachetta_Ceci_2020} investigate the optimal reinsurance problem under the criterion of maximizing the expected exponential utility of terminal wealth when the insurance company has restricted information on the loss process in a model with claim arrival intensity and claim sizes distribution affected by an unobservable environmental stochastic factor. 

In the present paper we investigate the optimal reinsurance strategy for a risk model with jump clustering properties in a partial information setting. The risk model is similar to that proposed by Dassios and Zhao \cite{Dassios_Zhao_2011} and it includes two different jump processes driving the claims arrivals: one process with constant intensity describing the exogenous jumps and another with stochastic intensity representing the endogenous jumps, that exhibits self-exciting features. The externally-excited component represents catastrophic events, which generate claims clustering increasing the claim arrival intensity. The endogenous part allows us to capture the clustering effect due to self-exciting features. That is, when an accident occurs, it increases the likelihood of such events.
The insurance company has only partial information at disposal, more precisely the insurer can only observe the cumulative claims process. The externally-excited component of the intensity is not observable and the insurer needs to estimate the stochastic intensity by solving a filtering problem. Our approach is substantially different from that of Cao et Al. \cite{Cao_Landriault_Li} in several respects: firstly, we work in a partial information setting; secondly, the intensity of the self-excited claims arrival exhibits a slight more general dependence on the claims severity; finally, we maximize an exponential utility function instead of following a mean-variance criterion. In a partially observable framework, our goal is to characterize the value process and the optimal strategy. The optimal stochastic control problem in our case turns out to be infinite dimensional and the characterization of the optimal strategy cannot be performed by solving a Hamilton-Jacobi-Bellman equation, but via a BSDE approach.

A difficulty naturally arises when dealing with Hawkes processes: the intensity of the jumps is not bounded a priori, although a non-explosive condition holds. Hence we are not able to exploit some relevant bounds, which are usually required to prove a verification theorem and results on existence and uniqueness of the solution for the related BSDE. Nevertheless, we are going to show that the optimal stochastic control problem has a solution, which admits a characterization in terms of a unique solution to a suitable BSDE.

Our paper aims to contribute in different directions to the literature on optimal reinsurance problems: first, we provide a rigorous and formal construction of the dynamic contagion model. Second, we study the filtering problem associated to our problem, providing a characterization of the filter process in terms of the Kushner-Stratonovich equation and the Zakai equation as well. To the best of our knowledge, this problem has not been addressed in the existing literature. We refer to Dassios and Jang \cite{Dassios_Jang} for a similar problem without the self-exciting component. Third, we solve the optimal reinsurance problem under the expected utility criterion.

We remark that our study differs from Brachetta and Ceci \cite{Brachetta_Ceci_2020} in many key aspects. The risk model is substantially different, requires a strong effort to be rigorously constructed and the study of a new filtering problem. What is more, a crucial assumption in Brachetta and Ceci \cite{Brachetta_Ceci_2020} is the boundedness of the claims arrival intensity, which is not satisfied in our case, thus leading to additional technicalities in most of the proofs. This is what happens, for example, to prove existence and uniqueness of the solution to the BSDE. Moreover, we perform the optimization over a class of admissible contracts, instead of maximizing over the retention level. This feature allows us to cover a larger class of problems. Finally, we do not require the existence of an optimal control for the derivation of the BSDE, hence the general presentation turns out to be different.

The paper is organized as follows. In Section \ref{sec:model} we are going to introduce the risk model and to specify what information is available to the insurer. A rigorous mathematical construction is provided,  based on a measure change approach, necessary to develop the following analysis in full details.
In Section \ref{sec:filtering} the filtering problem is investigated in order to reduce the optimal stochastic control problem to a complete information setting. The stochastic differential equation satisfied by the filter is obtained, by exploiting both the Kushner-Stratonovich and the Zakai approaches.
In Section \ref{Optimal_Control} the optimal stochastic control problem is formulated, while in Section \ref{Value_Process} a characterization of the value process associated with the optimal stochastic control problem is illustrated. Due to the infinite dimension of the filter, the approach based on the Hamilton-Jacobi-Bellman equation cannot be exploited, so the value process is characterized as the unique solution of a BSDE. In Section \ref{Optimal_Reinsurance} the optimal reinsurance strategy is investigated  under general assumptions and some relevant cases are discussed.
Some proofs and useful computations are collected in Appendices \ref{Appendix}, \ref{app:useful_res} and \ref{AppendixC}.


\section{The mathematical model}\label{sec:model}

Let $(\Omega,\F, \P;\bF)$ be a filtered probability space and assume that the filtration $\bF=\{\F_t, \ t \in [0,T]\}$ satisfies the usual hypotheses. The time $T>0$ is a finite time horizon that represents the maturity of a reinsurance contract. Here we start by giving an overview of the optimal reinsurance problem from the primary insurer's point of view, then, in Section \ref{subsec:model}, we provide a rigorous construction of our model setting.

 Our aim is to introduce a dynamic contagion process which generalizes the Hawkes and Cox processes with shot noise intensity introduced e.g. by Dassios and Zhao \cite{Dassios_Zhao_2011}. More precisely, the claims counting process $N^\a$ has the following $(\P, \bF)$-stochastic intensity, for $t \in [0,T]$:
\begin{equation}\label{intensity}
\lambda_t = \beta + (\lambda_0 - \beta)  e^{-\alpha t} + \sum_{j=1}^{N^\a_t} e^{-\alpha (t -  T^\a_j)} \ell(Z^\a_j) +  \sum_{j=1}^{N^\b_t}  e^{-\alpha (t - T^\b_j)} Z^\b_j ,
\end{equation}
where
\begin{itemize}
\item  $\beta > 0$ is the constant reversion level;
\item  $\lambda_0 >0$ is the initial value;
\item $\alpha >0$ is the constant rate of exponential decay;
\item $N^\b$ is a Poisson process with constant intensity $\rho>0$;
\item $\{T^\a_n\}_{n \geq 1}$ are the jump times of $N^\a$, i.e., the time instants when claims are reported;
\item $\{T^\b_n\}_{n \geq 1}$  are the jump times of $N^\b$, i.e., when exogenous/external factors make intensity jump;
\item $\{Z^\a_n\}_{n \geq 1}$ represent the claim size and they are modeled as a sequence of i.i.d. $\R^+$-valued random variables with  distribution function $F^\a : (0, + \infty) \to [0,1]$ such that $\mathbb E[Z^\a] < + \infty$;
\item $\ell : [0, +\infty) \to [0, +\infty)$ is a measurable function (for instance we could take $\ell(z) = a z$, $a>0$, and the self-exciting jumps would be proportional  to  claims sizes) such that $\mathbb E[ \ell(Z^\a)] < + \infty$;
\item $\{Z^\b_n\}_{n \geq 1}$ are the externally-excited jumps and they are modeled as a sequence of i.i.d. $\R^+$-valued random variables with  distribution function $F^\b : (0, + \infty) \to [0,1]$, such that $\mathbb E[Z^\b] < + \infty$.
\end{itemize}

Notice that the counting process $N^{(1)}$ is defined via its intensity $\lambda$ in Equation \eqref{intensity}, which in turn depends on the history of $N^{(1)}$. So, an apparent logical loop seems to arise about the existence of $\lambda$. We postpone this issue to Section \ref{subsec:model}, where we perform a rigorous construction of the model based on an equivalent change of probability measure.

The following assumption will hold from now on:
\begin{assumption}\label{ass:indep}
We assume $N^{(2)}$,  $\{Z^\a_n\}_{n \geq 1}$ and $\{Z^\b_n\}_{n \geq 1}$ to be independent of each other.\\
\end{assumption}

We define the cumulative claim process $C = \{ C_t,  t \in [0,T] \}$ at time $t$ as
\begin{equation}\label{loss}
C_t = \sum_{j=1}^{N^\a_t} Z^\a_j,\quad t \in [0,T].
\end{equation}

\begin{remark}
Our model includes many meaningful properties of risk models. The claim arrival process has stochastic intensity, reflecting random changes in the instantaneous probability that accidents occur. Most importantly, our framework captures both self-exciting (endogenous) and externally-exciting (exogenous) factors, via, respectively, the claim arrival times and sizes $\{ T^\a_n, Z^\a_n \}_{n \geq 1}$ and $\{ T^\b_n, Z^\b_n \}_{n \geq 1}$. For this reason, it is well suited to describe, for instance, ca\-ta\-stro\-phic events, see Cao, Landriault and Li \cite{Cao_Landriault_Li}, where self-exciting jump sizes are independent on claims severity. Differently, in our model they depend on claim sizes: ${\color{red}{\ell}}(Z^{(1)}_j )$.
Moreover, the decay coefficient is considered, because the catastrophic events typically exhibit this behavior.
 \end{remark}

The insurance company is allowed to subscribe a reinsurance contract with a retention function $\Phi(z,u)$ parametrized by a dynamic reinsurance strategy $u_t\in U$, $\forall t\in[0,T]$ (the control). That is, under a dynamic strategy $u = \{ u_t, t \in [0,T]\}$ the aggregate losses covered by the insurer, denoted by $C^u=\{ C^u_t, t \in [0,T]\}$, read
$$
C^u_t = \sum_{j=1}^{N^\a_t} \Phi(Z^\a_j, u_{T^\a_j}), \quad t \in [0,T],
$$
so that the remaining losses $(C - C^u)$ will be undertaken by the reinsurer. We highlight that in our settings the insurer can choose the optimal reinsurance arrangement over a class of admissible contracts, see Section \ref{Optimal_Control} for details.
For this service a reinsurance premium rate $q^u = \{ q^u_t, t \in [0,T]\}$ must be paid.
Hence the primary insurer receives the insurance premium rate $c$, pays the reinsurance premium rate $q^u$ and bears the aggregate losses $C^u$, so that the surplus process, $R^u$, follows the SDE:
\[
dR^u_t = \bigl(c_t-q^u_t\bigl)\,dt - dC^u_t, \quad R_0^u = R_0 \in \mathbb R^+,
\]
where $R_0$ denotes the initial capital. Investing the surplus in a risk-free asset with interest rate $r>0$, the total wealth $X^u$ of the primary insurer is
\[
dX^u_t = dR^u_t + rX^u_t\,dt, \qquad X^u_0= R_0\in\mathbb{R}^+ .
\]

We assume that the information at disposal is limited: the insurer only observes the cumulative claims process $C$ in Equation \eqref{loss}. Let us denote by $\bH$ the natural filtration generated by $C$:
\begin{equation}\label{eq:defH}
\bH = \bF^C = \{\F_t^C, \ t \in [0,T]\} \subset \bF,  \qquad  \F_t^C =  \sigma {\{ C_s, 0 \le  s \le t \}}.
\end{equation}
We assume that the insurer and the reinsurer have the same information represented by $\bH$. Therefore, the insurance and the reinsurance premium have to be $\bH$-predictable. The same applies to the insurer's control $u$. The insurer aims at maximizing the expected exponential utility of terminal wealth over a suitable class of $\bH$-predictable strategies $\mathcal{U}$ (which will be made precise later in Definition \ref{def:U}):
\[\label{exp_utility}
\sup_{u\in\mathcal{U}}\mathbb{E}\bigl[ 1-e^{-\eta X^u_T} \bigr],
\]
 where $\eta>0$ denotes the insurer's risk aversion. More mathematical details on the control problem to be solved will be given in Section \ref{Optimal_Control}.

\begin{remark}
Notice that the stochastic wealth $X^u$ can possibly take negative values, due to the possibility of borrowing money from the bank account.
\end{remark}

This setting leads to investigate  a stochastic control problem under partial information. Due to  the presence of  the externally-excited component, the claim arrival intensity in Equation \eqref{intensity} is $\bF$-adapted rather than $\bH$-adapted, hence it is not observable by the insurance and reinsurance companies. We will reduce the original problem to a stochastic control problem under complete information by solving a filtering problem in Section \ref{sec:filtering}. The knowledge of the filter process allows to compute the $\bH$-adapted (predictable)  intensity of the claim arrival process $N^\a$, which represents the best estimate of the stochastic intensity $\lambda$ based on the available information. 


The next subsection provides a formal and rigorous construction of our model.

\subsection{Model construction}\label{subsec:model}

We are going to introduce the dynamic contagion model by a suitable measure change, starting from two Poisson processes with constant intensity on a given probability space  $(\Omega, \mathcal F, \Q; \mathbb F)$: $N^{(1)}$ is standard and $N^{(2)}$ has constant intensity $\rho>0$. Moreover, we take two sequences $\{Z^\a_n\}_{n \geq 1}$ and $\{Z^\b_n\}_{n \geq 1}$ of i.i.d. positive random variables with distribution functions $F^\a$ and $F^\b$, respectively, and such that  $\mathbb E^{\Q}[\ell(Z^\a)] < + \infty$ and $\mathbb \mathbb E^{\Q}[ Z^\b] <+ \infty$. We assume $N^{(1)}$, $N^{(2)}$,  $\{Z^\a_n\}_{n \geq 1}$ and $\{Z^\b_n\}_{n \geq 1}$ to be independent of each other under $\Q$.

The key idea behind our construction is to introduce a new measure $\P$, equivalent to $\Q$ on $(\Omega, \mathcal F; \mathbb F)$, such that, under $\P$, the intensity of $N^\b$ and the distributions of $\{Z^\a_n\}_{n \geq 1}$ and $\{Z^\b_n\}_{n \geq 1}$ do not change and $N^{(1)}$ is a counting  process with stochastic intensity $\lambda$  given by Equation \eqref{intensity}. Notice that, under $\P$, $N^{(1)}$, $N^{(2)}$,  $\{Z^\a_n\}_{n \geq 1}$ and $\{Z^\b_n\}_{n \geq 1}$ are not independent anymore.

Let us introduce the integer-valued random measures $m^\i(\ud t, \ud z)$, $i=1,2$
\begin{equation}
\label{eqn:m1m2}
m^\i(\ud t, \ud z) = \sum_{n \geq 1} \delta_{(T_n^\i, Z_n^\i)}(\ud t, \ud  z) 1\!\!1_{\{ T_n^\i <+ \infty\}},
\end{equation}
where $\delta_{(t,z)}$ denotes the Dirac measure in $(t,z)$.
Under  $\Q$, $m^\i(\ud t, \ud z)$, $i=1,2$, are independent Poisson measures  with compensator measures given respectively by
$$\nu^{\a, \Q}(\ud t, \ud z)=  F^\a(\ud z) \ud t, \quad \nu^{\b, \Q}(\ud t, \ud z)= \rho F^\b(\ud z) \ud t.$$

The measure change from $(\Q, \mathbb F)$ to $(\P,\mathbb F)$ will be performed via the stochastic process $L$ defined as follows, for $t \in [0,T]$:
\begin{equation}
\label{eqn:L1}
L_t = \E \left( \int_0^t \int_0^{+\infty} (\lambda_{s ^-}- 1) \left( m^\a(\ud s, \ud z)  - F^\a(\ud z) \ud s \right) \right) = \E \left( \int_0^t(\lambda_{s ^-}- 1) (dN_s^\a  - \ud s) \right),
\end{equation}
where $\E (M_t)$ denotes the Dol\'eans-Dade exponential of a martingale $M$ and where $\lambda$ under $\mathbb Q$ is defined by Equation \eqref{intensity}.
This process will be proved to be a $(\Q, \mathbb F)$-martingale under the following:

\begin{assumption}\label{nuova_bis}
We assume that there exists $\varepsilon >0$ such that
$$\mathbb E^{\Q}\left[ e^{\varepsilon \ell(Z^\a)} \right] <+ \infty,
	\quad \mathbb E^{\Q} \left[ e^{\varepsilon Z^\b} \right] <+ \infty.$$
\end{assumption}
Before proving the martingale property, we notice the following:
\begin{remark}
Let us observe that
$\{ \int_0^t(\lambda_{s ^-}- 1) (dN_s^\a  - \ud s)$, $t \in [0,T]\}$  is a $(\Q, \mathbb F)$-martingale, since $\mathbb E^{\Q} [\int_0^t\lambda_{s } \ud s ]<+ \infty$, $\forall t \in [0,T]$.
In fact, by Equation \eqref{intensity}
\begin{equation}\label{eq:lambda_bound}
\lambda_t \le \max\{\lambda_ 0,\beta\} +  \sum_{j=1}^{N^\a_t}  \ell(Z^\a_j) +  \sum_{j=1}^{N^\b_t}  Z^\b_j
\end{equation}
and
\[
\begin{split}
\mathbb E^{\Q} [\lambda_{t } ]  &\le \max\{\lambda_ 0,\beta\}  + E^{\Q} [N^\a_t] E^{\Q} [\ell(Z^\a)] 		+ E^{\Q} [N^\b_t] E^{\Q} [Z^\b] \\
	&=\max\{\lambda_ 0,\beta\}   + (E^{\Q} [\ell(Z^\a] + \rho E^{\Q} [Z^\b] ) t .
\end{split}
\]
\end{remark}

We then have an explicit expression for $ L_t $:
\begin{equation}
\label{eqn:L}
L_t = e^{ -\int_0^t (\lambda_{s} - 1) ds  + \int_0^t \ln (\lambda_{s^-})  d N_s^{(1)} }, \quad t \in [0,T] ,
\end{equation}
and we define the equivalent measure $\P$ via
$$
\frac{d \P}{d \Q} {\Big \vert  _{\mathcal F_T}} = L_T.
$$

\begin{proposition}\label{Prop:L_mart}
Under Assumption \ref{nuova_bis} the Radon-Nikodym density process $L$ given in Equation \eqref{eqn:L} is a $(\Q, \mathbb F)$-martingale.
\end{proposition}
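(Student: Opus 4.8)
The plan is to reduce everything to the single identity $\bE^{\Q}[L_T]=1$. Indeed, $L=\E(M)$ with $M_t:=\int_0^t(\lambda_{s^-}-1)(dN^\a_s-ds)$, which by the Remark preceding the statement is a genuine $(\Q,\mathbb F)$-martingale (as $\bE^{\Q}[\int_0^T\lambda_s\,ds]<\infty$); hence $L$ is a nonnegative — in fact strictly positive, since $\lambda_t\ge\min\{\lambda_0,\beta\}>0$ by \eqref{intensity}, so the multiplicative jump of $L$ at a jump time of $N^\a$ equals $\lambda_{t-}>0$ — local martingale, thus a $(\Q,\mathbb F)$-supermartingale with $L_0=1$. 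For such a process $\bE^{\Q}[L_T]=1$ forces $\bE^{\Q}[L_t]\equiv 1$ on $[0,T]$ and $L$ to be a true martingale. So it suffices to show that $L_T\cdot\Q$ has total mass one.

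I would obtain this by localizing along $\tau_n:=T^\a_n$, the $n$-th jump time of $N^\a$ (so $\tau_n\uparrow+\infty$ $\Q$-a.s., $N^\a$ being Poisson under $\Q$), and then letting $n\to\infty$. First, each stopped process $L^{\tau_n}:=L_{\cdot\wedge\tau_n}$ is a true $\Q$-martingale with $\bE^{\Q}[L_{T\wedge\tau_n}]=1$; the concrete way to see this is to condition on $\G:=\sigma\big(N^\b,(Z^\b_j)_j,(Z^\a_j)_j\big)$. Under $\Q$ the process $N^\a$ is a rate-one Poisson process independent of $\G$, and given $\G$ the predictable intensity $\lambda_{\cdot-}$ is a piecewise-deterministic functional of the history of $N^\a$, bounded below by $\min\{\lambda_0,\beta\}>0$; consequently $L_{T\wedge\tau_n}$ is exactly the Radon--Nikodym density turning the intensity of $N^\a$ into $\lambda$ on $[0,\tau_n]$, and because $\int_0^\infty\lambda_s\,ds=+\infty$ along every path no mass is lost (explosion cannot occur before the $n$-th jump, and the $n$-th jump time remains finite), so $\bE^{\Q}[L_{T\wedge\tau_n}\mid\G]=1$, hence $\bE^{\Q}[L_{T\wedge\tau_n}]=1$. (Alternatively one may invoke a standard change-of-intensity theorem for point processes observed up to a fixed number of jumps.)

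I then set $\P_n:=L_{T\wedge\tau_n}\cdot\Q$, a probability measure on $\F_T$ under which, by Girsanov's theorem for marked point processes applied to the true density $L^{\tau_n}$ (using that $N^\a$ and $N^\b$ have no common jumps $\Q$-a.s.), $m^\a$ has compensator $\lambda_s\,\I_{[0,\tau_n]}(s)\,F^\a(dz)\,ds$ while $m^\b$ keeps its $\Q$-compensator $\rho\,F^\b(dz)\,ds$ and the marks keep their laws. Since $L_T=L_{T\wedge\tau_n}$ on $\{\tau_n>T\}$, monotone convergence ($\I_{\{\tau_n>T\}}\uparrow1$) gives
\[
\bE^{\Q}[L_T]=\lim_{n\to\infty}\bE^{\Q}\big[L_{T\wedge\tau_n}\,\I_{\{\tau_n>T\}}\big]=\lim_{n\to\infty}\big(1-\P_n(\tau_n\le T)\big),
\]
so it remains to prove $\P_n(\tau_n\le T)\to0$. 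Because $N^\a_{T\wedge\tau_n}=n$ on $\{\tau_n\le T\}$, Markov's inequality yields $\P_n(\tau_n\le T)\le n^{-1}\bE^{\P_n}[N^\a_{T\wedge\tau_n}]=n^{-1}\bE^{\P_n}\!\big[\int_0^{T\wedge\tau_n}\lambda_s\,ds\big]$. Writing $h_n(t):=\bE^{\P_n}[N^\a_{t\wedge\tau_n}]\le n<\infty$ and combining the linear bound \eqref{eq:lambda_bound} with the two compensators above, Tonelli's theorem produces an estimate of the form
\[
h_n(t)\ \le\ a_T+\bE^{\Q}[\ell(Z^\a)]\int_0^t h_n(s)\,ds,\qquad t\in[0,T],
\]
where $a_T$ is a finite constant depending only on $\lambda_0,\beta,\rho,T$ and $\bE^{\Q}[Z^\b]$ (here one uses $\int\ell(z)F^\a(dz)=\bE^{\Q}[\ell(Z^\a)]<\infty$ and $\int z\,F^\b(dz)=\bE^{\Q}[Z^\b]<\infty$). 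As $h_n$ is bounded, Gronwall's lemma gives $h_n(T)\le a_T e^{\bE^{\Q}[\ell(Z^\a)]T}=:C$, independent of $n$; hence $\P_n(\tau_n\le T)\le C/n\to0$, $\bE^{\Q}[L_T]=1$, and $L$ is a $(\Q,\mathbb F)$-martingale.

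\textbf{Main obstacle.} The real difficulty is that $\lambda$ is not bounded a priori — the self-exciting feature — so none of the classical criteria (Novikov, Kazamaki, or boundedness of the likelihood ratio) applies; the martingale property must be traced back to non-explosiveness of the dynamic contagion process, which is precisely what the localization by the jump times of $N^\a$, the linear growth bound \eqref{eq:lambda_bound}, and the integrability of the jump sizes guaranteed by Assumption \ref{nuova_bis} deliver. (In an alternative organization of the argument, Assumption \ref{nuova_bis} would instead be used to bound exponential moments of $\int_0^T\lambda_s\,ds$, or to establish uniform integrability of $\{L_{T\wedge\tau_n}\}_n$ and pass to the limit via Scheff\'e.) The two points requiring genuine care are (i) the invariance under $\P_n$ of the compensator of $m^\b$ and of the mark distributions — i.e.\ that reweighting the jumps of $N^\a$ does not perturb the orthogonal jump component — and (ii) the rigor of the conditioning argument behind $\bE^{\Q}[L_{T\wedge\tau_n}]=1$, where the strict positivity $\lambda\ge\min\{\lambda_0,\beta\}$ is exactly what prevents loss of mass.
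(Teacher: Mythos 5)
Your proof is correct, but it takes a genuinely different route from the paper. The paper invokes the sufficient criterion of Sokol and Hansen \cite[Corollary 2.5]{sokol} for exponential martingales of counting processes, reducing the martingale property to the finiteness of $\bE^\Q\big[e^{\int_u^t \log_+(\lambda_{s-})\,dN^\a_s}\big]$ on short windows, which via Lemmas \ref{lemma:exp_predictable}--\ref{lemma:exp_predictable2} reduces further to the exponential moments $\bE^\Q[e^{\varepsilon\ell(Z^\a)}],\ \bE^\Q[e^{\varepsilon Z^\b}]<\infty$ of Assumption \ref{nuova_bis}. You instead run a self-contained non-explosion argument: stop $L$ at the jump times $\tau_n=T^\a_n$ of $N^\a$, argue $\bE^\Q[L_{T\wedge\tau_n}]=1$ by conditioning on $\G$ (so that the conditional intensity on $[0,\tau_n]$ is bounded and the standard counting-process Girsanov theorem applies), and then propagate $\bE^\Q[L_T]=1$ by showing $\P_n(\tau_n\le T)\to 0$ via Markov's inequality, the linear bound \eqref{eq:lambda_bound}, and Gronwall. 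Both approaches are valid. The interesting difference is that your argument needs only the \emph{first} moments $\bE^\Q[\ell(Z^\a)],\ \bE^\Q[Z^\b]<\infty$, which are already built into the model construction in Section \ref{subsec:model}; you never invoke Assumption \ref{nuova_bis}. So your argument in fact proves the proposition under weaker hypotheses, at the cost of being longer and relying on a conditional-Girsanov step that requires some care (fixing the marks $(Z^\a_j)$ via $\G$ and applying the likelihood-ratio formula to the bare counting process, then checking that the orthogonal component $m^\b$ and the mark laws are unaffected, as you correctly flag). The paper's route is shorter once Lemmas \ref{lemma:exp_predictable}--\ref{lemma:exp_predictable2} are in place, and those same lemmas and the exponential-moment hypothesis are reused repeatedly downstream (e.g.\ Lemma \ref{lemma:expCfinito}, Lemma \ref{lem:finite}), so Assumption \ref{nuova_bis} is not an extra cost in the context of the full paper.

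One step worth tightening: when you write $\bE^{\P_n}[N^\a_{T\wedge\tau_n}]=\bE^{\P_n}\big[\int_0^{T\wedge\tau_n}\lambda_s\,ds\big]$, you should note that both sides are finite a priori because $N^\a_{T\wedge\tau_n}\le n$, so the compensator identity is legitimate; and the Gronwall step relies on $h_n(t)\le n<\infty$ for each fixed $n$, which you do record. The strict positivity $\lambda\ge\min\{\lambda_0,\beta\}>0$ is correctly used to ensure the density never hits zero, so $\P_n\sim\Q$ and the change-of-measure identities for the compensators hold on the whole space.
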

\begin{proof}
This proof is based on Sokol and Hansen \cite[Corollary 2.5]{sokol}. We observe that $\lambda$ in Equation \eqref{intensity} is nonnegative, predictable and locally bounded. Hence Sokol and Hansen \cite[Corollary 2.5]{sokol} can be straightforwardly applied after we prove that condition $(2.6)$ therein holds: there exists $\varepsilon >0$ such that whenever $0 \le u \le t, t-u \le \varepsilon$
\begin{equation}
\label{eqn:sokol_cond}
\mathbb E^{\Q} \left[ e^{ \int_u^t \log_+ (\lambda_{s-} ) d N_s^{(1)} } \right] < +\infty,
\end{equation}
where $\log_+ (x):=\max\{ 0, \log x\}$.
Applying Lemma \ref{lemma:exp_predictable} under the measure $\Q$, we obtain that
$$
\mathbb E^{\Q} \left[ e^{\int_u^t \log_+ (\lambda_{s-}) \,dN_s^{(1)}  }\right]
	\le  \mathbb E^{\Q}  \left[ e^{  \int_u^t( \lambda_{s-} -1) \,ds}\right].
$$
Hence condition \eqref{eqn:sokol_cond} is fulfilled if the expectation $\mathbb E^{\Q}  \left[ e^{  \int_u^t \lambda_{s-} \,ds}\right]$ is finite. By Equation \eqref{eq:lambda_bound} we have:
$$
\mathbb E^{\Q}  \left[ e^{  \int_u^t \lambda_{s-}  \,ds}\right] \le e^{\varepsilon \lambda_0\lor\beta} \mathbb E^{\Q}  \left[ e^{  \varepsilon  \sum_{j=1}^{N^\a_t}  \ell(Z^\a_j)  } \cdot e^{  \varepsilon  \sum_{j=1}^{N^\b_t}  Z^\b_j    } \right] \le e^{\varepsilon \lambda_0\lor\beta} \mathbb E^{\Q}  \left[ e^{  \varepsilon  \sum_{j=1}^{N^\a_t}  \ell(Z^\a_j)  } \right]  \mathbb E^{\Q}  \left[ e^{  \varepsilon  \sum_{j=1}^{N^\b_t}  Z^\b_j    } \right],
$$
where we used the mutual independence of $N^{(1)}$, $N^{(2)}$,  $\{Z^\a_n\}_{n \geq 1}$ and $\{Z^\b_n\}_{n \geq 1}$ which holds by construction under $\Q$. By exploiting Lemma \ref{lemma:exp_predictable2} we immediately find:
\begin{eqnarray*}
	 \mathbb E^{\Q}  \left[ e^{  \varepsilon  \sum_{j=1}^{N^\a_t}  \ell(Z^\a_j)  } \right]
	& = & 
e^{ t  \big (\mathbb E^{\Q}   [ e^{  \varepsilon \ell(Z^\a)  }  ] - 1 \big ) } < +\infty.
\end{eqnarray*}
Using similar arguments, one shows that
$
\mathbb E^{\Q}  \left[ e^{  \varepsilon  \sum_{j=1}^{N^\b_t}  Z^\b_j    } \right]  = e^{\rho t \big (\mathbb E^{\Q}   [ e^{  \varepsilon Z^\b  }   ] - 1 \big ) } < +\infty.
$
\end{proof}

Now that the change of measure has been rigorously introduced, we can safely introduce the $(\P, \bF)$ compensator measures of $m^\i(\ud t, \ud z), i=1,2$.
\begin{remark}\label{projection}
By the Girsanov Theorem the $(\P, \bF)$-predictable projections measures (the so-called compensator measures) of $m^\a(dt,dz)$ and $m^\b(dt,dz)$ (see Equation \eqref{eqn:m1m2}) are given respectively by
\begin{equation} \label{dualpred}
\nu^\a(\ud t, \ud z)= \lambda_{t^-} F^\a(\ud z) \ud t, \quad \nu^\b(\ud t, \ud z)= \rho F^\b(\ud z) \ud t.
\end{equation}
In particular, $N^\a$ is a point process with $(\P, \bF)$-predictable intensity $\{\lambda_{s^-}\}_{s \in [0,T]}$, while $N^\b$ remains a point process with constant $(\P, \bF)$ intensity $\rho>0$.\\
It turns out that for any $\bF$-predictable random field $\{H(t,z), t \in [0,T], z \in [0, + \infty)\}$ and $i=1,2$
$$
\bE \left[ \int_0^t \int_0^{+\infty} H(s,z)m^\i(\ud s, \ud z) \right] = \bE \left[ \int_0^t \int_0^{+\infty} H(s,z)\nu^\i(\ud s, \ud z) \right], \ {\color{red}{\forall}} t \in [0,T],
$$
where $\nu^\i(\ud s, \ud z)$, $i=1,2$, are defined in Equation \eqref{dualpred}.
Moreover, under the condition
$$\bE \left[ \int_0^T \int_0^{+\infty} |H(s,z)|\nu^\i(\ud s, \ud z) \right] <+ \infty,$$
the process
$$ \int_0^t \int_0^{+\infty} H(s,z) \left( m^\i(\ud s, \ud z) - \nu^\i(\ud s, \ud z) \right), \quad t \in [0,T],$$
is a $(\P, \bF)$-martingale.
\end{remark}

\subsection{Markov property}
\label{Markov property}

In this subsection we discuss and characterize the Markov structure of the intensity, working on $(\Omega,\F, \P;\bF)$. Equation \eqref{intensity} reads as
\begin{equation}\label{intensity_eq}
\ud \lambda_t = \alpha (\beta - \lambda_t ) \ud t+ \int_0^{+ \infty} \ell(z) m^\a(\ud t , \ud z) + \int_0^{+ \infty}  z m^\b(\ud t , \ud z).
\end{equation}


\begin{proposition}\label{prop:generatore}
The process $\lambda$ is a $(\P,\bF)$-Markov process with generator
\begin{equation}\label{generatore}
{\L} f( \lambda) =\alpha (\beta - \lambda) f' (\lambda) +
\int_0^{+ \infty} [ f(\lambda + \ell(z)) - f(\lambda) ] \lambda F^\a (\ud z) \nonumber + \int_0^{+ \infty} [ f(\lambda + z) - f(\lambda) ]  \rho F^\b (\ud z).
\end{equation}

The domain of the generator ${\L}$, denoted by ${\mathcal D}(\L)$, is given by the class of functions $f\in C^{1}(0, + \infty)$ such that
\begin{equation}
 \begin{split}
 &\bE \left[ \int_0^t \int_0^{+ \infty} |f(\lambda_s +\ell(z)) - f(\lambda_s)| \lambda_s  F^\a (\ud z)\ud s \right] <+ \infty, \\
 &\bE \left[ \int_0^t \int_0^{+ \infty}  |f(\lambda_s + z)) - f(\lambda_s)|  F^\b (\ud z)\ud s \right] <+ \infty,
 \end{split}
 \end{equation}
and
  \begin{equation}
   \bE \left[ \int_0^t \lambda_s |  f' (\lambda_s) | \ud s \right] <+ \infty.
  \end{equation}
\end{proposition}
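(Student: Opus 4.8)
The plan is to establish the Markov property of $\lambda$ together with the form of its generator by exhibiting $\lambda$ as the solution of the SDE \eqref{intensity_eq} driven by the Poisson random measures $m^{(1)},m^{(2)}$, and then applying the Dynkin / martingale-problem characterization. First I would verify that for $f\in C^1(0,+\infty)$ satisfying the three integrability conditions in the statement of $\mathcal D(\L)$, the It\^o formula for jump processes applied to $f(\lambda_t)$ gives
\[
f(\lambda_t) = f(\lambda_0) + \int_0^t \alpha(\beta-\lambda_s) f'(\lambda_s)\,\ud s + \int_0^t\!\!\int_0^{+\infty}\!\!\big[f(\lambda_{s^-}+\ell(z))-f(\lambda_{s^-})\big]\,m^\a(\ud s,\ud z) + \int_0^t\!\!\int_0^{+\infty}\!\!\big[f(\lambda_{s^-}+z)-f(\lambda_{s^-})\big]\,m^\b(\ud s,\ud z).
\]
Here I use that between jumps $\lambda$ solves the ODE $\dot\lambda = \alpha(\beta-\lambda)$, which is absolutely continuous, so the continuous-martingale part is absent and the drift term is exactly $\alpha(\beta-\lambda)f'(\lambda)$; at a jump of $m^\a$ of mark $z$ the value moves from $\lambda_{s^-}$ to $\lambda_{s^-}+\ell(z)$, and similarly for $m^\b$. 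Then, using the $(\P,\bF)$-compensators $\nu^\a(\ud s,\ud z)=\lambda_{s^-}F^\a(\ud z)\ud s$ and $\nu^\b(\ud s,\ud z)=\rho F^\b(\ud z)\ud s$ from Remark \ref{projection}, the three integrability conditions defining $\mathcal D(\L)$ are precisely what is needed to guarantee that
\[
\int_0^t\!\!\int_0^{+\infty}\!\!\big[f(\lambda_{s^-}+\ell(z))-f(\lambda_{s^-})\big]\big(m^\a(\ud s,\ud z)-\nu^\a(\ud s,\ud z)\big) + \int_0^t\!\!\int_0^{+\infty}\!\!\big[f(\lambda_{s^-}+z)-f(\lambda_{s^-})\big]\big(m^\b(\ud s,\ud z)-\nu^\b(\ud s,\ud z)\big)
\]
is a genuine $(\P,\bF)$-martingale (again invoking the martingale statement in Remark \ref{projection}), and that the compensated integrand $\int_0^t \lambda_s|f'(\lambda_s)|\,\ud s$ is finite. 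Consequently $f(\lambda_t) - \int_0^t \L f(\lambda_s)\,\ud s$ is an $\bF$-martingale, which is exactly the martingale-problem formulation with generator \eqref{generatore}.

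From the fact that $\big(f(\lambda_t)-\int_0^t\L f(\lambda_s)\,\ud s\big)_{t\in[0,T]}$ is an $\bF$-martingale for every $f\in\mathcal D(\L)$, the Markov property follows in the standard way: for $f$ in a sufficiently rich subclass of $\mathcal D(\L)$ (for instance $f$ bounded $C^1$ with bounded derivative, for which all three integrability conditions are automatic given $\bE[\int_0^t\lambda_s\,\ud s]<\infty$, which was already shown in the Remark preceding Proposition \ref{Prop:L_mart}), the martingale property yields $\bE[f(\lambda_{t+h})\mid\F_t]=\bE[f(\lambda_{t+h})\mid\lambda_t]$ by the usual argument that the solution of the (well-posed) martingale problem started from $\F_t$ depends only on the initial value $\lambda_t$. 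Well-posedness here is inherited from the pathwise construction: given the driving data $(m^\a,m^\b)$ and an initial condition, Equation \eqref{intensity_eq} has a unique solution obtained by the explicit deterministic interpolation between successive jump times (the right-hand side of \eqref{intensity} is an explicit functional of the jump times and marks), so uniqueness in law holds and the Markov property is a consequence.

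I expect the main obstacle to be the unboundedness of $\lambda$, which is precisely the recurring difficulty flagged in the introduction: the integrand $f(\lambda_{s^-}+\ell(z))-f(\lambda_{s^-})$ is integrated against $\lambda_{s^-}F^\a(\ud z)\ud s$, so controlling the local-martingale-versus-martingale distinction requires the bound \eqref{eq:lambda_bound} together with Assumption \ref{nuova_bis} and Lemmas \ref{lemma:exp_predictable}--\ref{lemma:exp_predictable2} to secure $\bE[\int_0^t\lambda_s\,\ud s]<\infty$ (already available) and, more delicately, to justify taking expectations and interchanging limits when $f$ or $f'$ grow. For the statement as phrased this is handled by the hypotheses defining $\mathcal D(\L)$, which are tailored exactly so that the stochastic integrals are true martingales; the technical care is in checking that It\^o's formula applies under only $C^1$ regularity (legitimate here since $\lambda$ is of finite variation, being a pure-drift-plus-jumps process, so the change-of-variables formula for finite-variation processes suffices and no second derivative is needed) and in verifying the localization argument that removes the stopping times used to make the integrals integrable in the first place.
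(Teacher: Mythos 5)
Your argument is exactly the one the paper intends by its terse remark ``It is a direct application of It\^o formula'': apply the change-of-variables formula for the finite-variation process $\lambda$, compensate the jump integrals using the $(\P,\bF)$-compensators from Remark \ref{projection}, observe that the integrability conditions defining $\mathcal D(\L)$ are precisely what upgrades the compensated integrals from local martingales to true martingales, and read off $\L$. Your additional care about the Markov property via well-posedness of the martingale problem and pathwise uniqueness is a reasonable and correct elaboration of a point the paper leaves implicit, so the proposal is correct and follows essentially the same route as the paper.
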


\begin{proof}
It is a direct  application of It\^o formula.
\end{proof}
In what follows we will need the following, which will be crucial to prove Proposition \ref{mom}:
\begin{assumption}\label{nuova1}
\[
 \bE[ (\ell(Z^\a)^k] < + \infty, \quad  \bE[ (Z^\b)^k] < + \infty, \quad \forall k=1,2, \dots.
\]
\end{assumption}

\begin{proposition}\label{mom}
Under Assumption \ref{nuova1}, for any $t \in [0,T]$
$$\bE \left[ \int_0^t \lambda_s^k \ud s \right] <  + \infty, \quad \forall k=1,2, \dots.$$
\end{proposition}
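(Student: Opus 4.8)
\emph{Strategy.} The plan is to argue by strong induction on $k\ge1$, the case $k=0$ being trivial since $\int_0^t1\,\ud s=t$. Because the intensity $\lambda$ carries no a priori bound — the recurring source of technical trouble in this paper — I would first localize. Set $\tau_n:=\inf\{t\ge0:\lambda_t\ge n\}\wedge T$. By the pathwise bound \eqref{eq:lambda_bound}, $\sup_{s\le T}\lambda_s\le\lambda_0\vee\beta+\sum_{j=1}^{N^\a_T}\ell(Z^\a_j)+\sum_{j=1}^{N^\b_T}Z^\b_j<+\infty$ a.s., so $\lambda$ does not explode on $[0,T]$; hence $\tau_n\uparrow T$ a.s., and $\lambda_{s^-}\le n$ on the stochastic interval $(0,\tau_n]$.

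Fix $k\ge1$ and apply It\^o's formula to $t\mapsto\lambda_{t\wedge\tau_n}^{k}$, using the dynamics \eqref{intensity_eq} and the $(\P,\bF)$-compensators \eqref{dualpred}:
\[
\lambda_{t\wedge\tau_n}^{k}=\lambda_0^{k}+\int_0^{t\wedge\tau_n}\psi_k(\lambda_s)\,\ud s+M_{t\wedge\tau_n},
\]
where $\psi_k$ is the function given by the right-hand side of \eqref{generatore} with $f(x)=x^{k}$ (finite for every $\lambda\ge0$ by Assumption \ref{nuova1}) and $M$ is the corresponding compensated pure-jump martingale. Since $\lambda_{s^-}\le n$ on $(0,\tau_n]$ and, by Assumption \ref{nuova1}, $\int_0^{+\infty}(n+\ell(z))^{k}F^\a(\ud z)<+\infty$ and $\int_0^{+\infty}(n+z)^{k}F^\b(\ud z)<+\infty$, the stopped integrand of $M$ is integrable against the compensator measures on $[0,T]$; hence, by Remark \ref{projection}, $M_{\cdot\wedge\tau_n}$ is a true $(\P,\bF)$-martingale. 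Since moreover the time integrand is bounded by a constant depending only on $n$ on $(0,\tau_n)$, taking expectations gives, with both sides finite,
\[
\bE\big[\lambda_{t\wedge\tau_n}^{k}\big]=\lambda_0^{k}+\bE\Big[\int_0^{t\wedge\tau_n}\psi_k(\lambda_s)\,\ud s\Big].
\]

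The decisive estimate is elementary: expanding $(\lambda+\ell(z))^{k}-\lambda^{k}=\sum_{i=0}^{k-1}\binom{k}{i}\lambda^{i}\ell(z)^{k-i}$ (and likewise in $z$) and discarding the nonpositive drift term $-k\alpha\lambda^{k}$, one obtains, for all $\lambda\ge0$,
\[
\psi_k(\lambda)\ \le\ k\,\bE[\ell(Z^\a)]\,\lambda^{k}+R_{k-1}(\lambda),
\]
where $R_{k-1}$ is a polynomial of degree at most $k-1$ with nonnegative coefficients, all finite by Assumption \ref{nuova1} (they are built from $\alpha,\beta,\rho$ and the moments $\bE[\ell(Z^\a)^{p}],\bE[(Z^\b)^{p}]$ with $p\le k$). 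By the induction hypothesis, $K_{k-1}:=\bE\big[\int_0^T R_{k-1}(\lambda_s)\,\ud s\big]<+\infty$; hence, setting $\phi_n(t):=\bE[\lambda_{t\wedge\tau_n}^{k}]$ (a bounded function of $t$ on $[0,T]$) and using $\bE\big[\int_0^{t\wedge\tau_n}\lambda_s^{k}\,\ud s\big]\le\int_0^t\phi_n(s)\,\ud s$,
\[
\phi_n(t)\ \le\ \lambda_0^{k}+K_{k-1}+k\,\bE[\ell(Z^\a)]\int_0^t\phi_n(s)\,\ud s .
\]
Gronwall's lemma then yields $\phi_n(t)\le(\lambda_0^{k}+K_{k-1})\,e^{k\bE[\ell(Z^\a)]T}$ uniformly in $n$, and letting $n\to\infty$ with $\tau_n\uparrow T$, monotone convergence gives $\bE\big[\int_0^t\lambda_s^{k}\,\ud s\big]\le T(\lambda_0^{k}+K_{k-1})\,e^{k\bE[\ell(Z^\a)]T}<+\infty$, which closes the induction.

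\emph{Main obstacle.} The only genuine difficulty is the absence of an a priori bound on $\lambda$: it forces the localization and, above all, it is precisely why the full strength of Assumption \ref{nuova1} is needed — the $k$-th moments of $\ell(Z^\a)$ and $Z^\b$ are what make the stopped jump integrals true martingales and what keep the coefficients of $R_{k-1}$ finite. Once these integrability issues are settled, the Gronwall step and the passage to the limit via non-explosion are routine.
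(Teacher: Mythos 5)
Your proof follows the same strategy as the paper's: strong induction on $k$, It\^o's formula applied to $\lambda^k$ with the binomial expansion of the jump terms, and Gronwall's lemma. The one place where you genuinely improve on the paper is the localization. The paper takes expectations directly in the It\^o identity and writes $\bE[\lambda_t^k] = \lambda_0^k + \dots + \int_0^t c_k\,\bE[\lambda_s^k]\,\ud s$, then invokes Gronwall — but Gronwall needs the unknown function to be a priori finite (or at least locally integrable), which is exactly what is being proved, so there is a circularity the paper does not address. Your stopping times $\tau_n$ resolve this cleanly: $\phi_n(t)=\bE[\lambda_{t\wedge\tau_n}^k]$ is bounded by $n^k\vee\lambda_0^k$ by construction, Gronwall gives a bound uniform in $n$, and monotone convergence of $\int_0^{t\wedge\tau_n}\lambda_s^k\,\ud s\uparrow\int_0^t\lambda_s^k\,\ud s$ closes the argument. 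The estimate $\psi_k(\lambda)\le k\,\bE[\ell(Z^\a)]\,\lambda^k+R_{k-1}(\lambda)$ is exactly what the paper's binomial expansion amounts to, so the inductive structure is identical; you have merely made the finiteness bookkeeping explicit. One small cosmetic point: the passage $\lambda_{t\wedge\tau_n}^k\to\lambda_t^k$ is not monotone in $n$, so at that spot you should say Fatou (or simply rely, as you eventually do, on the monotone convergence of the \emph{time integrals}, which is the step that actually matters).
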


\begin{proof}
We proceed by induction on $k$. We first prove that $\bE[ \lambda_t ] \leq h_1(t)$,  $t\geq 0$, with $h_1$ a measurable, nonnegative function such that $\int_0^T h_1(t) \ud t < + \infty$.  Let us observe that Equation \eqref{intensity} reads as
\begin{equation}
\lambda_t = \beta + (\lambda_0 - \beta)  e^{-\alpha t} +\int_0^t \int_0^ {+ \infty} e^{-\alpha (t -  s)} \ell(z) m^\a(\ud s , \ud z) +  \int_0^t \int_0^ {+ \infty} e^{-\alpha (t -  s)} z m^\b(\ud s , \ud z),
\end{equation}
hence by Remark \ref{projection}
\begin{equation}
\bE \left[ \lambda_t \right]= \beta + \left( \lambda_0 - \beta - \frac{\rho \bE[Z^\b] }{\alpha} \right)  e^{-\alpha t} + {1\over \alpha} \rho \bE[Z^\b]  +  \bE[\ell(Z^\a)] \int_0^t  e^{-\alpha (t -  s)}  \bE[\lambda_s]  \ud s.
\end{equation}
By applying  Gronwall's Lemma we obtain
\begin{equation} \label{stim}
\bE[\lambda_t ] \leq  \Big ( \beta + (\lambda_0 - \beta - \frac{\rho \bE[Z^\b] }{\alpha} )  e^{-\alpha t} + {1\over \alpha} \rho \bE[Z^\b] \Big ) e^{\bE[\ell(Z^\a)] {1 - e^{-\alpha t}\over \alpha}} = h_1(t), \quad t \in [0,T].
\end{equation}

It is immediate to verify that $ h_1(t) \geq 0$ and $\int_0^T h_1(t) \ud t < + \infty$.
Let us assume that $\bE [ \lambda_t^i ] \leq h_i(t)$,  with $h_i$ a measurable, nonnegative function such that $\int_0^T h_i(t) \ud t < + \infty$ for any $i=1,2, \dots k-1$.  By It\^o formula we get
\[
\begin{split}
\lambda_t^k   &  =  \lambda_0^k +  \int_0^t \alpha (\beta -\lambda_s) k \lambda^{k-1} _s \ud s +  \int_0^t \int_0^ {+ \infty} [  (\lambda_{s^-} + \ell(z))^k - (\lambda_{s^-})^k ] m^\a(\ud s , \ud z) \\
 \;&+\int_0^t  \int_0^{+ \infty} [ (\lambda_{s^-} + z)^k - (\lambda_{s^-})^k ] m^\b(\ud s , \ud z)  \\
& = \lambda_0^k + \int_0^t \alpha (\beta -\lambda_s) k \lambda^{k-1} _s \ud s + \int_0^t \int_0^ {+ \infty} \sum_{i=0}^{k-1} \binom{k}{i} (\lambda_{s^-})^i \ell(z)^{k-i} m^\a(\ud s , \ud z)   \\
 \;&+ \int_0^t  \int_0^{+ \infty} \sum_{i=0}^{k-1}  \binom{k}{i} (\lambda_{s^-})^i z^{k-i} m^\b(\ud s , \ud z).\end{split}
\]

Then, there exist $c_i >0$,   $i=1,2, \dots k$ such that
\begin{eqnarray*}
\bE[ \lambda_t^k]   &  = & \lambda_0^k +  \int_0^t \alpha k (\beta  \bE[\lambda^{k-1} _s] -  \bE[\lambda_s^k])  \ud s + \int_0^t \sum_{i=0}^{k-1} \binom{k}{i}  \bE[ \lambda_s^{i+1}]   \bE[\ell(Z^\a)^{k-i}] \ud s  \\
 & + &  \int_0^t  \sum_{i=0}^{k-1} \binom{k}{i} \bE[\lambda^i_{s}] \rho \bE[(Z^\b)^{k-i}] \ud s  \leq \lambda_0^k +  \int_0^t  \sum_{i=0}^{k-1}  c_i  h_i(s) \ud s + \int_0^t c_k \bE[ \lambda_s^k]  \ud s,
 \end{eqnarray*}
and again by  Gronwall's Lemma it follows that $\bE[ \lambda_t^{k} ] \leq h_{k}(t)$,  with $h_{k}$ a measurable, integrable and  nonnegative function on $[0,T]$, and this concludes the proof.
\end{proof}

\begin{proposition}
Under Assumption \ref{nuova1}, the functions $f_k(\lambda) :=\lambda^k $, $k=1,2, \dots$ belong to $\mathcal{D}(\L)$.
\end{proposition}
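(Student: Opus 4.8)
The plan is to verify directly the three integrability conditions that define $\mathcal{D}(\L)$ in Proposition \ref{prop:generatore}, for the choice $f_k(\lambda)=\lambda^k$. Since $f_k\in C^{1}(0,+\infty)$ with $f_k'(\lambda)=k\lambda^{k-1}$, there is nothing else to check besides these three conditions, and each of them reduces, via the binomial expansion, to a combination of Assumption \ref{nuova1} and the moment estimates of Proposition \ref{mom}. The only mild bookkeeping point is to observe that only moments of $\lambda$ of order at most $k$ ever appear, which is exactly the range covered by Proposition \ref{mom}.

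First I would dispose of the drift-type condition $\bE\!\left[\int_0^t \lambda_s\,|f_k'(\lambda_s)|\,\ud s\right]<+\infty$. Here $\lambda_s\,|f_k'(\lambda_s)|=k\,\lambda_s^{k}$, so this is precisely $k\,\bE\!\left[\int_0^t \lambda_s^{k}\,\ud s\right]<+\infty$, i.e. the statement of Proposition \ref{mom}.

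Next I would treat the two jump-type conditions using the elementary identity $(\lambda+y)^k-\lambda^k=\sum_{i=0}^{k-1}\binom{k}{i}\lambda^{i}y^{k-i}$, valid and termwise nonnegative for $\lambda,y\ge 0$. For the $m^\a$-term, multiplying by $\lambda_s$ and integrating against $F^\a(\ud z)$ gives $\sum_{i=0}^{k-1}\binom{k}{i}\lambda_s^{i+1}\,\bE[\ell(Z^\a)^{k-i}]$; taking expectations and integrating over $[0,t]$ (legitimate by Tonelli, all integrands being nonnegative) yields $\sum_{i=0}^{k-1}\binom{k}{i}\,\bE[\ell(Z^\a)^{k-i}]\int_0^t\bE[\lambda_s^{i+1}]\,\ud s$, which is finite because each factor $\bE[\ell(Z^\a)^{k-i}]$ is finite by Assumption \ref{nuova1} and each $\int_0^t\bE[\lambda_s^{i+1}]\,\ud s<+\infty$ (note $i+1\le k$) by Proposition \ref{mom}. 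The $m^\b$-term is handled identically: one obtains $\sum_{i=0}^{k-1}\binom{k}{i}\,\bE[(Z^\b)^{k-i}]\int_0^t\bE[\lambda_s^{i}]\,\ud s$, where the $i=0$ summand equals $\bE[(Z^\b)^{k}]\,t<+\infty$ and, for $i\ge 1$, $\int_0^t\bE[\lambda_s^{i}]\,\ud s<+\infty$ again by Proposition \ref{mom}.

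I do not expect a genuine obstacle: the argument is a routine Tonelli-plus-binomial computation, and the two nontrivial inputs (all moments of $\ell(Z^\a)$ and $Z^\b$ finite, and all $L^k$-in-time bounds on $\lambda$) are exactly Assumption \ref{nuova1} and Proposition \ref{mom}, both already available. The only thing to be careful about is to keep track of the shift in index ($\lambda^{i}$ becomes $\lambda^{i+1}$ in the $m^\a$-term because of the extra factor $\lambda_{s^-}$ coming from the compensator $\lambda_{t^-}F^\a(\ud z)\,\ud t$), so that one never needs a moment of order strictly larger than $k$.
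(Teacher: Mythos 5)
Your proposal is correct and is essentially the paper's argument made explicit: the paper's one-line proof refers to ``computations similar to those performed in the proof of Proposition~\ref{mom}'', and you have carried out precisely those computations --- binomial expansion of $(\lambda+y)^k-\lambda^k$, Tonelli, then Assumption~\ref{nuova1} for $\bE[\ell(Z^\a)^{k-i}]$, $\bE[(Z^\b)^{k-i}]$ and Proposition~\ref{mom} for $\int_0^t\bE[\lambda_s^j]\,\ud s$ with $j\le k$. The index-shift remark (the $m^\a$-term producing $\lambda_s^{i+1}$ rather than $\lambda_s^i$, still capped at order $k$) is exactly the small point worth writing down, and you did.
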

\begin{proof}
Under Assumption \ref{nuova1}, by computations similar to those performed in the proof of Proposition \ref{mom}, we get the claim.
\end{proof}


\section{The filtering problem}\label{sec:filtering}

We assume that the  insurance company has a partial information because the externally-exciting component in the intensity process $\lambda$ introduced in Equation \eqref{intensity} is not observable. For filtering of Cox processes with shot noise intensity, that is without the self-exciting component in Equation \eqref{intensity}, we refer to Dassios and Jang \cite{Dassios_Jang}, where the estimation of the intensity $\lambda$ given the observations of the claim arrival process $N^\a$ reduces to the use of the classical Kalman-Bucy filter after a Gaussian approximation of the intensity is performed. This result applies in the case where the intensity $\rho$ of the externally-exciting component  is sufficiently large.
Their working setting can be seen as a particular case of our contagion model and their results can then be obtained as special cases, with no assumption on $\rho$ needed (see also Remark \ref{Cshot}).

The insurance company aims at estimating the intensity $\lambda$ by observing the cumulative claim process $C$ defined in Equation \eqref{loss}, that is, by observing the double sequence $\{(T_n^\a, Z_n^\a)\}_{n \geq 1}$ of arrival times and claim sizes.  This leads to a filtering problem with marked point processes observations.

Let us recall that $\bH = \bF^C$, defined in Equation \eqref{eq:defH}, is the observation flow, representing the  information at disposal by the insurance company.
So, the estimate of the intensity $\lambda$ can be described through  the filter process $\pi=\{\pi_t, t \in [0,T]\}$ which provides the conditional distribution of $\lambda_t$ given $\H_t$, for any time $t \in [0,T]$.
More in details, the filter is the $\bH$-c\`adl\`ag (right-continuous with left limits) process taking values in the space of probability measures on $[0, + \infty)$ such that
 $$
 \pi_t(f) = \bE[ f(\lambda_t) |  \H_t ],
 $$
for any function $f$ satisfying $\bE[ \int_0^t | f(\lambda_s)| \ud s] < +\infty$, $\forall t \in [0,T]$.
It is easy to verify that $\{\pi_{t^-}(\lambda), t \in [0,T]\}$, where $\pi_{t}(\lambda) =  \bE[ \lambda_t |  \H_t ]$ and $\pi_{t^-}(\lambda) = \lim_{s\to t^-} \pi_{t}(\lambda)$, provides the $\bH$-predictable intensity of $N^\a$.

\begin{remark}\label{nuovo}
For any function $f$ satisfying $\bE[ \int_0^t | f(\lambda_s)| \ud s] < +\infty$, for any $ t\in [0,T]$, we have that $\bE \left[ \int_0^t  \pi_s(f) \ud s \right]  = \bE \left[ \int_0^t f(\lambda_s) \ud s \right]$  and Jensen's inequality implies
$$\bE  \left[  \int_0^t |\pi_s(f)| \ud s\right]  \leq \bE  \left[  \int_0^t \pi_s(|f|)\ud s\right] = \bE  \left[  \int_0^t| f(\lambda_s)| \ud s\right]  < +\infty, \quad \forall t\in [0,T].$$
\end{remark}

By applying the innovation method (see for instance Br\'emaud \cite[Chapter IV]{Bremaud}) we will characterize the filter in terms of the so called Kushner-Stratonovich (KS henceforth) equation.

\begin{theorem}[Kushner-Stratonovich equation]  \label{KSthm}
For any $f \in {\mathcal D}(\L)$, the filter  is the unique strong solution to the filtering equation, for any $t \in [0,T]$
\begin{equation}\label{KS}
\begin{split}
 \pi_t (f) &= f(\lambda_0) + \int_0^t \pi_s (\L f) \ud s  \\
&+ \int_0^t  \int_0^{+ \infty} \Big ( {\pi_{s^-}(f( \lambda + \ell(z)) \lambda)  \over
\pi_{s^-}(\lambda)} - \pi_{s^-}(f) \Big ) \left( m^\a( \ud s, \ud z) - \pi_{s^-} (\lambda) F^\a (\ud z) \ud s \right),
\end{split}
\end{equation}
where $\L$ and ${\mathcal D}(\L)$ are given in Proposition \ref{prop:generatore}.

\end{theorem}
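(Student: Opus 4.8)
The plan is to follow the innovation method of Br\'emaud \cite[Chapter IV]{Bremaud}, adapted to marked point process observations and to the present setting where the intensity is not bounded. The three ingredients are: the $(\P,\bF)$-semimartingale decomposition of the signal, its projection onto $\bH$ combined with a martingale representation theorem for $m^\a$, and the identification of the gain coefficient by testing against $\bH$-predictable integrands.

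\textbf{Step 1 (decomposition of the signal).} For $f\in\mathcal D(\L)$, It\^o's formula applied to \eqref{intensity_eq}, together with Remark \ref{projection} and the integrability built into $\mathcal D(\L)$, gives $f(\lambda_t)=f(\lambda_0)+\int_0^t\L f(\lambda_s)\,\ud s+M^f_t$, where
\[
M^f_t=\int_0^t\!\!\int_0^{+\infty}\![f(\lambda_{s^-}+\ell(z))-f(\lambda_{s^-})](m^\a-\nu^\a)(\ud s,\ud z)+\int_0^t\!\!\int_0^{+\infty}\![f(\lambda_{s^-}+z)-f(\lambda_{s^-})](m^\b-\nu^\b)(\ud s,\ud z)
\]
is a $(\P,\bF)$-martingale.

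\textbf{Step 2 (projection and representation).} Conditioning on $\H_t$ and using Remark \ref{nuovo} to justify Fubini's theorem, one checks that $\pi_\cdot(f)-\int_0^\cdot\pi_s(\L f)\,\ud s$ is a $(\P,\bH)$-martingale: for $G\in\H_s$ with $s<t$, $\bE[\mathbf 1_G(\pi_t(f)-\pi_s(f))]=\bE[\mathbf 1_G(f(\lambda_t)-f(\lambda_s))]=\bE[\mathbf 1_G\int_s^t\L f(\lambda_u)\,\ud u]=\bE[\mathbf 1_G\int_s^t\pi_u(\L f)\,\ud u]$. Since $\bH=\bF^C$ is the natural filtration of the marked point process $m^\a$, whose $(\P,\bH)$-compensator is $\pi_{s^-}(\lambda)F^\a(\ud z)\ud s$ (the marks $\{Z^\a_n\}$ being i.i.d.\ with law $F^\a$ and independent of the rest, only the arrival rate is affected by the conditioning), the representation theorem for point-process martingales yields an $\bH$-predictable random field $\gamma$ with
\[
\pi_t(f)=f(\lambda_0)+\int_0^t\pi_s(\L f)\,\ud s+\int_0^t\!\!\int_0^{+\infty}\!\gamma_s(z)\bigl(m^\a(\ud s,\ud z)-\pi_{s^-}(\lambda)F^\a(\ud z)\ud s\bigr).
\]

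\textbf{Step 3 (identification of the gain).} Fix a bounded $\bH$-predictable field $h$ and set $N^h_t=\int_0^t\int_0^{+\infty}h_s(z)(m^\a(\ud s,\ud z)-\pi_{s^-}(\lambda)F^\a(\ud z)\ud s)$, an $L^2$-$(\P,\bH)$-martingale by Remark \ref{nuovo}. Evaluating $\bE[f(\lambda_t)N^h_t]$ in two ways, via the decompositions of Steps 1 and 2 respectively and integration by parts, and using that $N^h_{s^-}$ is $\H_s$-measurable to replace signal terms by their filters, the common term $\bE[\int_0^tN^h_{s^-}\pi_s(\L f)\,\ud s]$ cancels and one is left with
\[
\bE\Bigl[\int_0^t\!\!\int_0^{+\infty}\!\gamma_s(z)h_s(z)\,\pi_{s^-}(\lambda)F^\a(\ud z)\ud s\Bigr]=\bE\Bigl[\int_0^t\!\!\int_0^{+\infty}\!h_s(z)\bigl(\pi_{s^-}(f(\lambda+\ell(z))\lambda)-\pi_{s^-}(\lambda)\pi_{s^-}(f)\bigr)F^\a(\ud z)\ud s\Bigr].
\]
Indeed $[M^f,N^h]$ contributes $\pi_{s^-}(f(\lambda+\ell(z))\lambda)-\pi_{s^-}(f(\lambda)\lambda)$ (only the $m^\a$-jumps matter, with jump size $\ell(z)$ for $\lambda$), while the term $\int f(\lambda_{s^-})\,\ud N^h_s$ contributes $\pi_{s^-}(f(\lambda)\lambda)-\pi_{s^-}(\lambda)\pi_{s^-}(f)$, coming from the gap between the $\bF$- and $\bH$-compensators of $m^\a$; the two add up to the right-hand side. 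As $h$ is arbitrary and $\pi_{s^-}(\lambda)>0$ (since $\lambda_t>0$ for all $t$), this forces $\gamma_s(z)=\pi_{s^-}(f(\lambda+\ell(z))\lambda)/\pi_{s^-}(\lambda)-\pi_{s^-}(f)$, $\nu^\a$-a.e., which is exactly \eqref{KS} (recall $\pi_0(f)=f(\lambda_0)$ as $\lambda_0$ is deterministic).

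\textbf{Step 4 (uniqueness) and the main obstacle.} Between two consecutive jumps of $C$ equation \eqref{KS} reduces to a measure-valued ODE, and at each observation time $\pi$ is updated through the explicit Bayes rule read off from the jump term; this piecewise-deterministic structure plus a Gronwall argument on each inter-jump interval gives uniqueness of the strong solution, as in Br\'emaud \cite[Chapter IV]{Bremaud}. The delicate point is Step 3: because $\lambda$, hence $\L f$, $M^f$ and all the integrands, is unbounded, the integration-by-parts identities and the Fubini exchanges must be justified by a localisation/truncation argument and then a passage to the limit, using the moment estimates of Proposition \ref{mom} and the integrability in $\mathcal D(\L)$ and Remark \ref{nuovo}, and one must check throughout that the stochastic integrals involved are genuine, not merely local, martingales.
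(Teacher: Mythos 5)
Steps 1--3 of your proposal are essentially the paper's own argument (the innovation method: It\^o decomposition of the signal, optional projection plus martingale representation for $m^\a$, then identification of the gain by testing against $\bH$-predictable integrands). Whether one works with the uncompensated integral $\Gamma_t=\int_0^t\int U_s(z)\,m^\a(\ud s,\ud z)$ and the equality $\widehat{\Gamma_t f(\lambda_t)}=\Gamma_t\widehat{f(\lambda_t)}$, as the paper does, or with the compensated $N^h$ and two evaluations of $\bE[f(\lambda_t)N^h_t]$, as you do, is a cosmetic variation; the bracket and compensator bookkeeping are identical, and you correctly flag that the martingale-versus-local-martingale and Fubini issues need a truncation. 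Note, however, that the paper's choice $U_t(z)=U_t\mathbbm 1_A(z)\mathbbm 1_{\{t\le T^\a_n\}}$ is precisely that truncation (it forces $\Gamma$ to be bounded), followed by letting $n\to\infty$ via non-explosion; you mention localisation in passing but should make it part of Step 3 rather than a remark in Step 4.

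The genuine gap is in your Step 4. You assert uniqueness by ``piecewise-deterministic structure plus a Gronwall argument on each inter-jump interval, as in Br\'emaud.'' The Br\'emaud reference covers filtering with bounded rates on a finite/low-dimensional state space, and does not transfer directly here: between jumps the KS equation is a \emph{nonlinear} measure-valued ODE,
\[
\ud\pi_t(f)=\pi_t(\widetilde\L f)\,\ud t-\bigl[\pi_t(\lambda f)-\pi_t(\lambda)\pi_t(f)\bigr]\ud t,
\]
whose drift involves the unbounded coefficient $\lambda$ and the quadratic term $\pi_t(\lambda)\pi_t(f)$; a naive Gronwall bound in a metric on probability measures is not available without further work. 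The paper avoids this entirely and proves uniqueness by a different route: strong uniqueness of the KS equation is reduced, following Ceci--Colaneri, to uniqueness of the Filtered Martingale Problem for the pair $(\lambda,C)$, which is then established via Kurtz--Ocone's Theorem 3.3 after verifying their hypotheses (well-posedness of the martingale problem, a carefully chosen domain $\mathcal D(\bar\L)$ of compactly supported $C^1$-in-$\lambda$ functions on which $\bar\L f$ is bounded and continuous, and density of that domain in $C_0$). Alternatively one can first pass to the linear Zakai equation between jumps and use the Feynman--Kac representation of Proposition \ref{lin} to conclude. Either way, the uniqueness part of your argument is not a proof as written; you need to replace the Gronwall claim with the FMP machinery (or the Zakai linearisation) to close the gap.
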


 \begin{proof}
 We denote by $\widehat R$ the $(\P, \bH)$-optional projection of  an $\bF$-progressively measurable process $R$ such that  $\bE[ |R_t|] <+ \infty$ $\forall t \in[0,T]$. We will use the two-well known facts:
 \begin{itemize}
 \item for every $(\P, \bF)$-martingale $m$, the $(\P, \bH)$-optional projection $\widehat m$  is a $(\P, \bH)$-martingale;
\item for any $\bF$-progressively measurable process $\Psi$ we have that $ \widehat {\int_0^t \Psi_s \ud s} - {\int_0^t \widehat \Psi_s \ud s}$, $\in[0,T]$,
 is a $(\P,\bH)$-martingale.
 \end{itemize}

By It\^o formula, for any $f \in {\mathcal D}(\L)$, we have:
$$
f(\lambda_t) = f(\lambda_0) + \int_0^t {\L} f(\lambda_s) \ud s  +  m^f_t, \quad \forall t \in [0,T],
$$
where $m^f$ is a $(\P, \bF)$-martingale and taking the $(\P,\bH)$-optional projection we get
\begin{equation} \label{dec2}
\widehat {f(\lambda_t)} = f(\lambda_0) + \int_0^t \widehat {\L f(\lambda_s)} \ud s  +  M^f_t, \quad \forall t \in [0,T],
\end{equation}
where $M^f$ is a $(\P, \bH)$-martingale.  By the martingale representation theorem there exists an $\bH$-predictable random field,
$h^f=\{ h^f_t(z), t \in [0,T], z \in [0, + \infty)\}$, such that, for any $t \in [0,T]$
\begin{equation}\label{mgf}
M^f_t =  \int_0^t \int_0^ {+ \infty} h^f_s(z)  \left( m^\a(\ud s , \ud z)  - \pi_{s^-}(\lambda) F^\a (\ud z)\ud s \right)
\end{equation}
and $\bE \left[ \int_0^t \int_0^ {+ \infty} |h^f_s(z)| \pi_{s^-}(\lambda) F^\a (\ud z)\ud s \right] <+ \infty$. To derive  the expression of $h^f$, we consider an $\bH$-adapted and bounded process
$$\Gamma_t = \int_0^t  \int_0^ {+ \infty} U_s(z) m^\a(\ud s, \ud z)$$
with $U$ an $\bH$-predictable bounded random field. Since $\Gamma$ is $\bH$-adapted the following equality holds
\begin{equation}\label{1} \widehat{\Gamma_t f(\lambda_t)} = \Gamma_t \widehat{f(\lambda_t)},  \quad \forall t \in [0,T], \P-a.s. .
\end{equation}
By applying the product rule we get
\[
\begin{split}
\ud (\Gamma_t f(\lambda_t) ) & =  \Gamma_{t^-}  \ud f(\lambda_t) + f(\lambda_{t^-}) \ud \Gamma_t + \ud ([ \Gamma_t, f(\lambda_t) ]) \\
& = \Gamma_{t^-}  {\L}f(\lambda_t) \ud t  + \Gamma_{t^-} \ud m^f_t + \int_0^ {+ \infty} f(\lambda_{t^-}) U_t(z) m^\a(\ud t , \ud z)   \\
 \;& +  \int_0^{+ \infty} U_t (z) [ f(\lambda_{t^-} + \ell(z)) - f(\lambda_{t^-})] m^\a( \ud t, \ud z)  \\
& = \Gamma_{t^-}  {\L}f(\lambda_t) \ud t  +
\int_0^{+ \infty} U_t (z)  f(\lambda_{t^-} + \ell(z)) \lambda_{t} F^\a(\ud z) \ud t+ \ud \overline  m^f_t ,
\end{split}
\]
 where   $\overline m^f$ is a $(\P,\bF)$-martingale. Taking the  $(\P, \bH)$-optional projection we obtain that
\begin{align}\label{eq1}
	\ud ( \widehat{\Gamma_t f(\lambda_t)} ) = & \Big [\Gamma_{t^-} \  \widehat{{\L}f(\lambda_t)} +  \int_0^{+ \infty} U_t(z)
	\widehat{\big (\lambda_t f(\lambda_{t} + \ell}(z)) \big )  F^\a (\ud z) \Big ] \ud t + \M^f_t,
\end{align}
where $\M^f$ is a $(\P, \bH)$-martingale.  On the other hand we have that
\begin{equation} \label{eq2}
\begin{split}
\ud (\Gamma_t \ \widehat{f(\lambda_t)} ) &= \Gamma_{t^-}  \ud \widehat{f(\lambda_t)}  + \widehat{f(\lambda_{t^-})} \ud \Gamma_t + \ud ([ \Gamma_t, \widehat{f(\lambda_t)}  ])  \\
 &=  \Big [ \Gamma_{t^-}  \widehat{{\L}f(\lambda_t)}  +   \int_0^{+ \infty} U_t(z) \big ( h^f_t (z) +   \widehat{f(\lambda_{t})} \big )\widehat{\lambda_t}  F^\a (\ud z) \Big ] \ud t + \overline \M^f_t,
\end{split}
\end{equation}
where $\overline \M^f$ is a $(\P, \bH)$-martingale. By \eqref{1} we have that the finite variation parts in  Equations \eqref{eq1} and \eqref{eq2} have to coincide: for any $t \in [0,T]$
\begin{align}
\int_0^t \int_0^{+ \infty} U_s (z) h^f_s (z) \widehat{\lambda_s}  F^\a (\ud z) = \int_0^t \int_0^{+ \infty} U_s(z) \big [  \widehat{(  \lambda_s f(\lambda_{s} + \ell}(z)))-   \widehat{f(\lambda_{s})} \widehat{\lambda_s} ] F^\a (\ud z).
\end{align}
We select $U_t (z)$ of the form $U_t(z) = U_t 1\!\!1_A(z) 1\!\!1_{ \{t \leq T^\a_n \}}$ with $U = \{U_t, t  \in [0,T]\}$ any bounded $\bH$-predictable, positive process and $A \in \mathcal B([0, + \infty))$.  With this choice we get that $\Gamma$ is bounded and $\forall A \in \mathcal B([0, + \infty))$ and $t \leq T^\a_n \wedge T$
$$\int_A  h^f_t (z) \widehat{\lambda_t}  F^\a (\ud z) = \int_A \big [  \widehat{( \lambda_t f(\lambda_{t} + \ell}(z))) -   \widehat{f(\lambda_{t})} \widehat{\lambda_t} ] F^\a (\ud z)$$
and recalling that $\lambda_{t} >0$, $\forall t \in [0,T]$ (which implies  $\widehat{\lambda_t} = \pi_t(\lambda)>0$ $\forall t \in [0,T]$), we obtain that
\begin{equation}
h^f_t (z)  = {\pi_{t^-}(f(\lambda + \ell(z)) \lambda)  \over \pi_{t^-}(\lambda)} - \pi_{t^-}(f(\lambda)), \quad    t \leq T^\a_n \wedge T.
\end{equation}
Finally since the counting process $N^\a$ is not explosive we have that $T^\a_n \to + \infty$ as $n \to + \infty$ and by Equations \eqref{dec2} and \eqref{mgf} we obtain that the filter is solution to the KS Equation \eqref{KS}.

It remains to prove uniqueness for this equation. As in Theorem 3.3 in Ceci and Colaneri \cite{Ceci_Colaneri} we have that strong uniqueness of the solution to the KS Equation  follows by uniqueness of the Filtered Martingale Problem (FMP($\bar \L, \lambda_0, C_0$)) associated to the generator $\bar \L$ of the pair $\{ (\lambda_t, C_t), \in [0,T] \}$ for any initial condition $(\lambda_0, C_0) \in (0,+\infty) \times [0, + \infty)$. For details on FMP we refer to Kurtz and Ocone \cite{Kurtz_Ocone}.
The operator $\bar \L$ is given by
\begin{eqnarray}\label{generatore1}
{\bar \L} f( \lambda, C) &=&\alpha (\beta - \lambda){\partial f \over \partial \lambda} (\lambda,C) +
\int_0^{+ \infty} [ f(\lambda + \ell(z), C+z) - f(\lambda,C) ] \lambda F^\a (\ud z) \nonumber \\
& & + \int_0^{+ \infty} [ f(\lambda + z, C) - f(\lambda, C) ]  \rho F^\b (\ud z),
\end{eqnarray}
for a suitable class of functions $f(\lambda, C)$.

Next, to prove that  the FMP($\bar \L, \lambda_0, C_0$) has a unique solution we apply Theorem 3.3 in Kurtz and Ocone \cite{Kurtz_Ocone}, after checking that the  required hypotheses are fulfilled.  First, let us observe that the martingale problem for the operator $\bar \L$ is well posed on the space of c\`adl\`ag $(0, + \infty) \times [0, + \infty)$-valued paths. Furthermore, we can choose a domain ${\mathcal D}(\bar \L)$, such that for any $f \in {\mathcal D}(\bar \L)$ then $\L f \in C_b((0, + \infty) \times [0, + \infty))$. Let ${\mathcal D}(\bar \L)$ be the set of functions $f \in C((0, + \infty) \times [0, +\infty))$ having compact support and $C^1$ w.r.t. $\lambda \in (0, +\infty)$. Then for any $f \in {\mathcal D}(\bar \L)$ there exists $R_f>0$ such that
$$|\bar \L f(\lambda, C)| \leq \Big  \{ \alpha (\beta + \lambda) \Big |{\partial f \over \partial \lambda} (\lambda, C) \Big |+ 2 |\!| f |\!|(\lambda + \rho)\Big \}\mathbbm{1}_{\{ |\!| (\lambda, C) |\!| \leq R_f\}} \leq K$$
with $K$ positive constant. Moreover,  it is easy to verify that $\bar \L f(\lambda, C)$ is a continuous function of their arguments.
Finally, ${\mathcal D}(\bar \L)$ is dense in the space of continuous functions which vanish at infinity and so all  hypotheses of Theorem 3.3 in Kurtz and Ocone \cite{Kurtz_Ocone}  are satisfied and this concludes the proof.
\end{proof}
The filtering Equation \eqref{KS} has a natural recursive structure in terms of the sequence $\{T^\a_n\}_{n \geq 1}$. Indeed, between two consecutive jump times, for $t \in [T^\a_n \wedge T, T^\a_{n+1} \wedge T)$ Equation \eqref{KS} reads as
\begin{equation}\label{ks1}
\ud \pi_t (f)= \pi_t (\widetilde \L f) \ud t - [ \pi_{t} (\lambda f) -  \pi_{t} (\lambda) \pi_{t}(f) ] \ud t,
\end{equation}
where
\begin{equation}\label{L tilde} \widetilde \L f(\lambda) = \alpha (\beta - \lambda )f'( \lambda) +
\int_0^{+ \infty} [ f(\lambda + z) - f(\lambda) ]  \rho F^\b (\ud z).\end{equation}

At a jump time $T^\a_n \leq T$, we have that the value of the filter is completely determined by the knowledge
of the filter $\pi_t$, with $t \in (T^\a_{n-1} \wedge T, T^\a_n\wedge T)$ and  the observed data $(T^\a_n, Z^\a_n)$, precisely
\begin{equation}\label{ks1jump}\pi_{T^\a_n}(f) = {\pi_{T^\a_{n^-}}( \lambda f(\lambda + \ell(Z^\a_n))) \over \pi_{T^\a_{n^-}}( \lambda)}.
\end{equation}
Notice that $\widetilde \L $ is the Markov generator of a shot noise Cox process, obtained taking $\ell(z) =0$ in Equation \eqref{intensity}.
\begin{remark}
Let us consider $f_k (\lambda) = \lambda^k$, $k=1,2, \dots$  since
$$\widetilde {\L} f_k (\lambda) = \alpha (\beta - \lambda) k f_{k-1} (\lambda) +
\int_0^{+ \infty} [(\lambda + z)^k   - \lambda^k] \rho F^\b (\ud z)$$
we get by Equations \eqref{ks1} and  \eqref{ks1jump}, that, for  any $k=1,2, \dots$,
between two consecutive jump times
\begin{align}\label{ks2}
\ud \pi_t (f_k)& = \alpha \big (\beta  \pi_t (f_{k-1}) - \pi_t (f_{k}) \big ) k  \ud t  \notag\\
&+ \sum_{i=0}^{k-1} \binom{k}{i} \pi_t (f_i)
 \rho \bE[ (Z^\b)^{k-i} ] \ud t -
  [ \pi_{t} (f_{k+1}) -  \pi_{t} (f_1) \pi_{t}(f_k)) ] \ud t
  \end{align}
  and at a jump time $T^\a_n \leq T$
  \begin{equation} \label{ks2jump}
  \pi_{T^\a_n}(f_k) = {\pi_{T^\a_{n^-}}( \lambda (\lambda + \ell(Z^\a_n))^k) \over \pi_{T^\a_{n^-}}( f_1)}  = { \sum_{i=0}^{k} \binom{k}{i}  \pi_{T^\a_{n^-}}( f_{i+1})  \ell(Z^\a_n)^{k-i}  \over \pi_{T^\a_{n^-}}( f_1)}.
\end{equation}

In particular, for $k=1$ we have that $\pi_{t} (f_1) = \pi_{t}(\lambda)$ provides the $(\P, \bH)$-intensity of $N^\a$, and the KS equation reads as
\begin{eqnarray}
 \pi_t (\lambda)  & = & \lambda_0 + \int_0^t \pi_s (\L f_1) \ud s  \nonumber \\
&&+ \int_0^t  \int_0^{+ \infty} \Big ( {\pi_{s^-}( (\lambda + \ell(z)) \lambda)  \over \pi_{s^-}(\lambda)} - \pi_{s^-}(\lambda) \Big )(m^\a( \ud s, \ud z) - \pi_{s^-} (\lambda) F^\a (\ud z) \ud s) \nonumber \\
& = & \lambda_0 + \int_0^t \Big [\alpha(\beta - \pi_s (\lambda) )+ \rho \bE[Z^\b] - (\pi_s(\lambda^2) - \pi_s(\lambda)^2) \Big ] \ud s  \nonumber \\
&&+ \int_0^t  \int_0^{+ \infty} \Big [ \ell(z) + {\pi_{s^-}(\lambda^2) - \pi_{s^-}(\lambda)^2  \over \pi_{s^-}(\lambda) } \Big ]m^\a( \ud s, \ud z),\nonumber
\end{eqnarray}
that is
\begin{eqnarray}\label{KS_lambda}
 \ud \pi_t (\lambda) &= &\alpha \Big (\beta +\rho  {\bE[Z^\b] \over \alpha} - \pi_t (\lambda) \Big) \ud t - (\pi_t(\lambda^2) - \pi_t(\lambda)^2)  \ud t  \nonumber \\
 & & + \int_0^{+ \infty} \ell(z) m^\a( \ud s, \ud z) + {\pi_{t^-}(\lambda^2) - \pi_{t^-}(\lambda)^2  \over \pi_{t^-}(\lambda) } \ud N^\a_t.
 \end{eqnarray}

Notice that the equations for  $\pi_t (f_k)$ depend on $\pi_t (f_1), \dots, \pi_{t} (f_{k+1})$, for any $k=1,2, \dots$. Thus the $(\P, \bH)$-predictable intensity of $N^\a$, $\pi_{t^-}(\lambda)= \pi_{t^-}(f_1)$, is completely characterized  by  a countable systems of equations given in \eqref{ks2} and \eqref{ks2jump}. Moreover, Equation \eqref{KS_lambda} involves the processes $\pi_t (\lambda)$ and $\pi_{t}(\lambda^2) - \pi_{t}(\lambda)^2= \bE[ (\lambda_t -  \pi_{t}(\lambda))^2  | \H_t) = \text{Var} (\lambda_t | \H_t)$.
 \end{remark}

\begin{remark}\label{ultimo}
By Jensen's inequality, since $\pi_t(\lambda^2)  \geq \pi_t(\lambda)^2$,
we get by Equation \eqref{KS_lambda} and a comparison result that
$$
\pi_t (\lambda)  \leq Y_t, \quad \P-a.s. \ \forall t \in [0,T],
$$
where the process $Y$ has the same jumps of  $\pi(\lambda)$ and between two consecutive jumps solves the SDE: $\ud Y_t = \alpha(\widetilde \beta - Y_t)  \ud t,$
where $\widetilde \beta = \beta + {\rho \bE[Z^\b] \over \alpha}.$ More precisely, for  $t \in [T^\a_{n}\wedge T, T^\a_{n+1}\wedge T), Y_t =   \widetilde \beta  + (\pi_{T^\a_n}(\lambda)  -   \widetilde \beta) e^{-\alpha(t - T^\a_{n})}.$
Hence the filter is dominated by a process with exponential decay behaviour between consecutive jump times.
\end{remark}

Thanks to Theorem \ref{KSthm} we have characterized the filter  in terms of a nonlinear stochastic equation. In our framework it is possible to describe  the filter also in terms of the unnormalized filter as solution of the so-called Zakai equation, which has the advantage of being linear. 

By the Kallianpur-Striebel formula we get that, for any $t \in [0,T]$
\begin{equation}
\pi_t(f) =  {\bE^\Q[ L_t f(\lambda_t) |  \H_t ]  \over  \bE^\Q[ L_t  |  \H_t ]}  = {\sigma_t (f) \over  \sigma_t (1)},
\end{equation}
where $\Q$ is the equivalent probability measure introduced in Section \ref{subsec:model}, $L$ is given in Equation \eqref{eqn:L}.
The process ${\sigma_t (f)} = {\bE^\Q[ L_t f(\lambda_t) |  \H_t ]}$, $t \in [0,T]$,  denotes the unnormalized filter and is a finite measure-valued $\bH$-c\`adl\`ag process.

\begin{proposition}[Zakai equation]\label{Zpr}
For any $f \in {\mathcal D}(\L)$, the unnormalized filter  is the unique strong solution to the Zakai equation, for any $t \in [0,T]$
\begin{multline}\label{Zeq}
 \sigma_t (f) = f(\lambda_0) + \int_0^t \sigma_s (\L f) \ud s \\
+ \int_0^t  \int_0^{+ \infty} \Big ( \sigma_{s^-}( \lambda f(\lambda + \ell(z)) )   - \sigma_{s^-}(f) \Big ) (m^\a( \ud s, \ud z) - F^\a(\ud z) \ud s).
\end{multline}
\end{proposition}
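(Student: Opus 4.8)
The plan is to mirror the proof of Theorem~\ref{KSthm}, but working under the reference measure $\Q$ and replacing the $(\P,\bH)$-optional projection by the $(\Q,\bH)$-optional projection: indeed, by the Kallianpur--Striebel formula, $\sigma_t(f)=\mathbb E^{\Q}[L_tf(\lambda_t)\mid\H_t]$ is exactly the $(\Q,\bH)$-optional projection of the process $t\mapsto L_tf(\lambda_t)$. I would proceed in four steps: (i) compute the $(\Q,\bF)$-semimartingale decomposition of $L_tf(\lambda_t)$; (ii) take its $(\Q,\bH)$-optional projection; (iii) represent the resulting $(\Q,\bH)$-martingale against the $\Q$-compensated jump measure of $m^\a$ and identify its integrand by a testing argument; (iv) settle uniqueness.

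Step (i) is the only genuinely new ingredient with respect to the KS proof. I would use that under $\Q$ the compensator of $m^\a$ is $F^\a(\ud z)\,\ud t$, so $\lambda$ has $\Q$-generator $\L^{\Q}$ obtained from $\L$ by deleting the factor $\lambda$ in front of $F^\a(\ud z)$, and that $\ud L_t=L_{t^-}(\lambda_{t^-}-1)(\ud N^\a_t-\ud t)$. Applying It\^o's formula to $f(\lambda_t)$ under $\Q$, the product rule to $L_tf(\lambda_t)$, the elementary identity $\L^{\Q}f(\lambda)+(\lambda-1)\int_0^{+\infty}[f(\lambda+\ell(z))-f(\lambda)]\,F^\a(\ud z)=\L f(\lambda)$ (which is precisely what makes the drift come out right), and the fact that at a jump of $N^\a$ with mark $z$ one has $L_t=L_{t^-}\lambda_{t^-}$, hence $\Delta(Lf(\lambda))_t=L_{t^-}[\lambda_{t^-}f(\lambda_{t^-}+\ell(z))-f(\lambda_{t^-})]$, I expect to reach
\[ L_tf(\lambda_t)=f(\lambda_0)+\int_0^t L_{s^-}\L f(\lambda_s)\,\ud s+\widetilde{M}^f_t , \]
where $\widetilde{M}^f$ is a $(\Q,\bF)$-martingale made of an $L_{s^-}$-weighted compensated $m^\b$-integral (coming from the $N^\b$-driven jumps of $f(\lambda_t)$) plus $\int_0^t\int_0^{+\infty}L_{s^-}[\lambda_{s^-}f(\lambda_{s^-}+\ell(z))-f(\lambda_{s^-})]\,(m^\a(\ud s,\ud z)-F^\a(\ud z)\,\ud s)$.

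For steps (ii)--(iii) I would take the $(\Q,\bH)$-optional projection of this identity, using the two projection facts recalled at the beginning of the proof of Theorem~\ref{KSthm} (the optional projection of a $(\Q,\bF)$-martingale is a $(\Q,\bH)$-martingale; $\widehat{\int_0^\cdot\Psi_s\,\ud s}-\int_0^\cdot\widehat{\Psi_s}\,\ud s$ is a $(\Q,\bH)$-martingale), obtaining $\sigma_t(f)=f(\lambda_0)+\int_0^t\sigma_s(\L f)\,\ud s+M^f_t$ with $M^f$ a $(\Q,\bH)$-martingale. Since $\bH=\bF^C$ is the natural filtration of the marked point process $\{(T^\a_n,Z^\a_n)\}_{n\ge1}$, whose $(\Q,\bH)$-compensator is $F^\a(\ud z)\,\ud t$, the martingale representation theorem gives an $\bH$-predictable field $h^f$ with $M^f_t=\int_0^t\int_0^{+\infty}h^f_s(z)\,(m^\a(\ud s,\ud z)-F^\a(\ud z)\,\ud s)$. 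To identify $h^f$ I would repeat the testing argument of Theorem~\ref{KSthm} almost verbatim: take $\Gamma_t=\int_0^t\int_0^{+\infty}U_s(z)\,m^\a(\ud s,\ud z)$ with $U$ bounded $\bH$-predictable, exploit $\widehat{\Gamma_tL_tf(\lambda_t)}=\Gamma_t\sigma_t(f)$ (as $\Gamma$ is $\bH$-adapted), expand both sides via the product rule, project, and match the predictable finite-variation parts; specialising to $U_s(z)=U_s\mathbbm{1}_A(z)\mathbbm{1}_{\{s\le T^\a_n\}}$ should yield $h^f_s(z)=\sigma_{s^-}(\lambda f(\lambda+\ell(z)))-\sigma_{s^-}(f)$ for $s\le T^\a_n\wedge T$, and letting $n\to+\infty$ (non-explosion of $N^\a$) gives Equation~\eqref{Zeq}. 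For uniqueness (step (iv)) the cleanest route is to transfer it from Theorem~\ref{KSthm}: if $\sigma,\sigma'$ solve \eqref{Zeq} then $\sigma(1),\sigma'(1)$ solve it with $f\equiv1$, the normalisations $\sigma/\sigma(1),\sigma'/\sigma'(1)$ both solve the KS equation \eqref{KS} (by It\^o's quotient rule) hence coincide with the filter $\pi$, so $\sigma(1)$ and $\sigma'(1)$ both solve the linear Dol\'eans--Dade equation $\ud x_t=x_{t^-}(\pi_{t^-}(\lambda)-1)(\ud N^\a_t-\ud t)$ with $x_0=1$ and are therefore equal, whence $\sigma=\sigma'$; alternatively one argues through uniqueness of the filtered martingale problem exactly as in Theorem~\ref{KSthm}.

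The main obstacle I anticipate is integrability, once again because $\lambda$ is not bounded: one must verify that $L_tf(\lambda_t)$ and all the integrands involved (in particular $L_{s^-}\lambda_{s^-}f(\lambda_{s^-}+\ell(z))$) are genuinely $\Q$-integrable, that the compensated-$m^\a$ stochastic integrals are true $(\Q,\bH)$-martingales rather than merely local ones, and that the left-hand limits $\sigma_{s^-}(\cdot)$ are well defined and behave correctly under the product rule and the optional/predictable projections. This is precisely where Proposition~\ref{mom}, the integrability conditions built into $\mathcal D(\L)$, Remark~\ref{nuovo}, and the martingale property of $L$ (Proposition~\ref{Prop:L_mart}, Assumption~\ref{nuova_bis}) have to be invoked; the rest is the same bookkeeping as in the Kushner--Stratonovich proof.
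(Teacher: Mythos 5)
Your proof is correct, but it takes a genuinely different route from the paper's. The paper's argument is a short bootstrap from the already-proved Kushner--Stratonovich equation: it first notes that $\sigma_t(1)=\bE^\Q[L_t\mid\H_t]=\frac{\ud\P}{\ud\Q}\big\vert_{\H_t}$ is, by Girsanov, the Dol\'eans--Dade exponential of $\int_0^\cdot(\pi_{s^-}(\lambda)-1)(\ud N^\a_s-\ud s)$, hence solves the linear SDE $\ud\sigma_t(1)=\sigma_{t^-}(1)(\pi_{t^-}(\lambda)-1)(\ud N^\a_t-\ud t)$; it then differentiates the Kallianpur--Striebel identity $\sigma_t(f)=\pi_t(f)\,\sigma_t(1)$ by the product rule, feeds in the KS dynamics \eqref{KS} for $\pi(f)$ and the above SDE for $\sigma(1)$, and the cross-variation $\sum_{s\le t}\Delta\pi_s(f)\Delta\sigma_s(1)$ supplies exactly the jump integrand that turns the pair into Equation~\eqref{Zeq}. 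You instead rebuild the Zakai equation from first principles, re-running the innovation/testing machinery of Theorem~\ref{KSthm} under the reference measure $\Q$: your $(\Q,\bF)$-decomposition of $L_tf(\lambda_t)$ is correct --- the cancellation between the $-L_{s^-}(\lambda_{s^-}-1)f(\lambda_{s^-})\,\ud s$ term coming from $L$ and the extra $(\lambda-1)\int[f(\lambda+\ell(z))-f(\lambda)]F^\a(\ud z)$ in the $\Q$-compensated jump does produce drift $L_{s^-}\L f(\lambda_s)$, and the $N^\a$-jump of $L_tf(\lambda_t)$ with mark $z$ is indeed $L_{t^-}[\lambda_{t^-}f(\lambda_{t^-}+\ell(z))-f(\lambda_{t^-})]$, which projects to the desired integrand $\sigma_{s^-}(\lambda f(\lambda+\ell(z)))-\sigma_{s^-}(f)$. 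Your route is the classical reference-probability derivation of Zakai; it is more self-contained (it does not presuppose the KS equation for the existence step) but repeats work the paper has already sunk into Theorem~\ref{KSthm}, so the paper's approach is the economical one given what precedes it. For uniqueness both arguments reduce to uniqueness of the KS equation; the paper simply cites \cite{Ceci_Colaneri1}, while you spell out the mechanism (normalised solutions of \eqref{Zeq} solve \eqref{KS} hence equal $\pi$, then $\sigma(1)$ and $\sigma'(1)$ satisfy the same linear Dol\'eans--Dade equation with $\pi_{t^-}(\lambda)$ as coefficient and $x_0=1$, hence agree), which is the same idea made explicit --- just keep in mind this uses positivity of $\sigma(1)$ to license the division. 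You also correctly flag that the real technical burden is the integrability of $L_tf(\lambda_t)$ and of the $m^\a$-integrands given that $\lambda$ is unbounded; this is where Propositions~\ref{Prop:L_mart} and \ref{mom} and the definition of $\mathcal D(\L)$ do the work, exactly as in the KS proof.
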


\begin{proof}
First let us observe that
$\sigma_t (1) = \bE^\Q[ L_t  |  \H_t ] =  \frac{d \P}{d \Q}{\Big \vert  _{\mathcal H_t}}$, $t \in [0,T]$.
Thus the dynamics of $\sigma(1)$ can be easily obtained by considering the effect of the Girsanov change measure,  that is $\sigma(1)$ is the Dol\'eans-Dade exponential of the $(\Q, \bH)$-martingale $\int_0^t(\pi_{s^-} (\lambda) -1 )  (dN^\a_s  - \ud s)$

$$\sigma_t (1)  = \E \left( \int_0^t (\pi_{s^-} (\lambda) -1 )  (dN^\a_s  - \ud s) \right).$$

Hence  it solves
\begin{equation}\label{Z1}
 \ud \sigma_t (1) =  \sigma_{t^-}( 1) (\pi_{t^-} (\lambda) -1 ) (\ud N^\a_t - \ud t).
\end{equation}

By It\^o's formula we get that
$$ d\sigma_t (f) = \pi_{t^-}(f) \ud \sigma_t(1)  +\sigma_{t^-} (1) \ud  \pi_{t}(f) + d \Big ( \sum_{s \leq t} \Delta \pi_s(f) \Delta \sigma_s(1) \Big ).$$
Taking into account  Equations \eqref{KS} and \eqref{Z1} and that
$$d \Big (\sum_{s \leq t} \Delta \pi_s(f) \Delta \sigma_s(1) \Big ) =\int_0^{+ \infty} \sigma_{t^-} (1) ( \pi_{t^-}( \lambda) - 1)  \Big ( {  \pi_{t^-}( \lambda f(\lambda + \ell(z)) )  \over  \pi_{t^-}( \lambda)}  - \pi_{t^-}(f) \Big ) m^\a( \ud t, \ud z)$$
we get Equation \eqref{Zeq}.
Finally as in Theorem 4.7 in Ceci and Colaneri \cite{Ceci_Colaneri1} we can prove strong uniqueness for the Zakai equation by the strong uniqueness of the KS-equation.
\end{proof}

The Zakai  equation can be written also as
\begin{equation}
\ud \sigma_t (f) = [ \sigma_t (\widetilde \L f) -  \sigma_{t} ((\lambda  -1) f) ]\ud t
+  \int_0^{+ \infty} \Big ( \sigma_{t^-}( \lambda f(\lambda + \ell(z)) )   - \sigma_{t^-}(f) \Big ) m^\a( \ud s, \ud z) ,
\end{equation}
where the operator $\widetilde \L$ is defined in Equation \eqref{L tilde}
and as the  KS-equation it   has a natural recursive structure in terms of the sequence $\{T^\a_n\}_{n \geq 1}$.
Indeed, between two consecutive jump times, for $t \in [T^\a_n \wedge T, T^\a_{n+1} \wedge T)$ it reads as
\begin{equation} \label{Z1}
\ud \sigma_t (f) = [ \sigma_t (\widetilde \L f) -  \sigma_{t} ((\lambda  -1) f) ]\ud t
\end{equation}
and at a jump time $T^\a_n \leq T$
\begin{equation}\label{Zjump}\sigma_{T^\a_n}(f) = \sigma_{T^\a_{n^-}}( \lambda f(\lambda + \ell(Z^\a_n)).
\end{equation}

By the linear structure of the Zakai between consecutive jumps we get a convenient expression of the filter.

\begin{proposition}\label{lin}
The following representation holds, for any $f \in {\mathcal D}(\L)$ and $\forall n=1, 2, \dots$
\begin{equation}\label{rapp}
\pi_t(f) =  {\bE[ f(\widetilde \lambda^n_t) e^{-\int_{s}^t (\widetilde \lambda^n_u  -1) \ud u } ]|_{s = T^\a_{n-1}}  \over \bE[ e^{-\int_{s}^t (\widetilde \lambda^n_u  -1) \ud u } ]|_{s = T^\a_{n-1}}}, \quad t \in (T^\a_{n-1}\wedge T, T^\a_n\wedge T)
\end{equation}
where $\widetilde \lambda^n$ is the shot noise Cox process, solution $\forall t \in (T^\a_{n-1}\wedge T, T^\a_n\wedge T)$, of the SDE
\begin{equation}  \label{SN}
\ud \widetilde \lambda^n_t =  \alpha (\beta -  \widetilde \lambda^n_t ) \ud t  + \int_0^{+ \infty}  z m^\b(\ud t , \ud z),
\end{equation}
with initial law  $\pi_ {T^\a_{n-1}}$.
\end{proposition}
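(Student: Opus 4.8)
The plan is to exploit the linear, recursive structure of the Zakai equation between two consecutive claim arrivals, recorded in \eqref{Z1}--\eqref{Zjump}: on each interval $(T^\a_{n-1}\wedge T,\,T^\a_n\wedge T)$ the unnormalized filter obeys $\ud\sigma_t(f)=\big[\sigma_t(\widetilde\L f)-\sigma_t((\lambda-1)f)\big]\ud t$, which is the forward equation associated with the Feynman--Kac semigroup built from the shot-noise generator $\widetilde\L$ of \eqref{L tilde} and the killing rate $\lambda\mapsto\lambda-1$. Since $\pi_t(f)=\sigma_t(f)/\sigma_t(1)$ and $\pi_{T^\a_{n-1}}=\sigma_{T^\a_{n-1}}/\sigma_{T^\a_{n-1}}(1)$, it suffices to prove the unnormalized identity
\[
\sigma_t(f)=\int_0^{+\infty}\bE\Big[f(\widetilde\lambda^n_t)\,e^{-\int_s^t(\widetilde\lambda^n_u-1)\,\ud u}\,\Big|\,\widetilde\lambda^n_s=\lambda\Big]\,\sigma_{T^\a_{n-1}}(\ud\lambda),\qquad s=T^\a_{n-1},
\]
for $t\in(T^\a_{n-1}\wedge T,\,T^\a_n\wedge T)$; writing the same identity with $f\equiv1$ and dividing, the total mass $\sigma_{T^\a_{n-1}}(1)$ cancels and, since integrating the conditional expectations above against $\sigma_{T^\a_{n-1}}$ amounts (up to that factor) to taking $\widetilde\lambda^n$ with initial law $\pi_{T^\a_{n-1}}$, the right-hand side becomes exactly the ratio in \eqref{rapp}.

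To obtain the displayed identity, I would fix $t$ and set $v(r,\lambda):=\bE[\,f(\widetilde\lambda^n_t)\,e^{-\int_r^t(\widetilde\lambda^n_u-1)\,\ud u}\mid\widetilde\lambda^n_r=\lambda\,]$ for $r\in[s,t]$, so the integrand above is $v(s,\cdot)$. The process $\widetilde\lambda^n$ solving \eqref{SN} is a genuine, non-explosive Markov process with generator $\widetilde\L$ --- between its $m^\b$-jumps it equals $\beta+(\widetilde\lambda^n_{T^\b_j}-\beta)e^{-\alpha(t-T^\b_j)}$ --- and it stays strictly positive, so $0<e^{-\int_r^t(\widetilde\lambda^n_u-1)\,\ud u}\le e^{T}$; together with the moment bounds of Proposition \ref{mom} this makes $v$ well defined and, by the Markov property and It\^o's formula, $v$ solves the backward Kolmogorov equation with potential, $\partial_r v(r,\cdot)+\widetilde\L v(r,\cdot)-(\lambda-1)v(r,\cdot)=0$, $v(t,\cdot)=f$. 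Then the duality argument closes the proof: on $[T^\a_{n-1},t]$ the map $r\mapsto\sigma_r\big(v(r,\cdot)\big)$ has derivative $\big[\sigma_r(\widetilde\L v(r,\cdot))-\sigma_r((\lambda-1)v(r,\cdot))\big]+\sigma_r\big(\partial_r v(r,\cdot)\big)=0$ by the between-jumps Zakai equation \eqref{Z1} and the backward equation, so it is constant; evaluating at $r=t$ gives $\sigma_t(f)$ and at $r=T^\a_{n-1}$ gives $\sigma_{T^\a_{n-1}}(v(s,\cdot))$, which is the right-hand side of the unnormalized identity. (Equivalently, one checks that the right-hand side solves the between-jumps Zakai equation with datum $\sigma_{T^\a_{n-1}}$ and concludes by the uniqueness of Proposition \ref{Zpr}.)

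The hard part will be the technical bookkeeping, since $f\in\mathcal D(\L)$ and the killing rate $\lambda\mapsto\lambda-1$ are unbounded on a non-compact state space: one must check that $v(r,\cdot)\in\mathcal D(\widetilde\L)$ with $r\mapsto v(r,\cdot)$ of class $C^1$ so that It\^o's formula applies, and that the pairings $\sigma_r(\widetilde\L v(r,\cdot))$ and $\sigma_r((\lambda-1)v(r,\cdot))$ are integrable so that differentiating $r\mapsto\sigma_r(v(r,\cdot))$ and the attendant Fubini exchanges are justified. This is controlled by the uniform bound $e^{-\int_r^t(\widetilde\lambda^n_u-1)\ud u}\le e^{T}$ together with the estimates $\bE[\int_0^t\lambda_s^k\,\ud s]<+\infty$ of Proposition \ref{mom} (and their analogues for $\widetilde\lambda^n$, which are elementary given its explicit inter-jump form), first for $f$ polynomial or compactly supported and then for general $f\in\mathcal D(\L)$ by approximation.
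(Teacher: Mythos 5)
Your proof is correct, and your main argument follows a genuinely different route from the paper. The paper fixes the initial condition $(s,x)$, applies It\^o's formula directly to the pathwise product $f(\widetilde\lambda^{s,x}_t)\,\gamma_t$ with $\gamma_t=e^{-\int_s^t(\widetilde\lambda^{s,x}_u-1)\ud u}$, and after taking expectations reads off that $\Psi_t(s,x)(f):=\bE[f(\widetilde\lambda^{s,x}_t)\gamma_t]$ solves the forward between-jumps Zakai equation \eqref{Z1} as a measure-valued process in $t$; normalizing then yields a solution of the KS equation \eqref{ks1}, and the conclusion follows from strong uniqueness of the KS equation (Theorem \ref{KSthm}) after integrating over $x$ against $\pi_{T^\a_{n-1}}$. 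You instead fix $t$, introduce the backward Feynman--Kac function $v(r,\lambda)$, show it solves the backward Kolmogorov equation with potential $\lambda-1$, and then run the duality argument that $r\mapsto\sigma_r(v(r,\cdot))$ is a conserved quantity along the between-jumps Zakai flow, evaluating it at the two endpoints of $[T^\a_{n-1},t]$. Both routes are standard and both deliver the same unnormalized identity $\sigma_t(f)=\sigma_{T^\a_{n-1}}(v(T^\a_{n-1},\cdot))$, from which \eqref{rapp} follows on dividing by the $f\equiv 1$ case. The trade-off is that the paper's forward approach is technically lighter: it never needs $v(r,\cdot)$ to be differentiable in $r$, to belong to $\mathcal D(\widetilde\L)$, or a Leibniz-type rule for $r\mapsto\sigma_r(v(r,\cdot))$ --- the only probabilistic inputs are that the local-martingale term in the product rule is a true martingale and a Fubini exchange, both controlled by the uniform bound $\gamma_t\le e^T$ together with Proposition \ref{mom}. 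Your duality argument is arguably more ``constructive'' (the formula is derived as a conservation law rather than verified against a uniqueness theorem), but it imports precisely the $C^1$-regularity/domain issues you flag as the hard part; these are not merely bookkeeping, and if you wanted to make the duality route airtight you would either have to establish the smoothing properties of the killed shot-noise semigroup or, as you yourself note in your parenthetical, retreat to the paper's strategy of checking that the right-hand side solves the linear equation and invoking the uniqueness of Proposition \ref{Zpr}/Theorem \ref{KSthm}.
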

\begin{proof}
Let $\widetilde \lambda^{s,x}$ denotes the solution to Equation \eqref{SN} with initial condition $(s,x) \in [0,+ T) \times (0,+ \infty)$.
By It\^o's formula  $\forall s <t \leq T$
$$f(\widetilde \lambda^{s,x}_t) = f(x) + \int_s^t \widetilde \L f(\widetilde \lambda^{s,x}_u) \ud u +  M_t -M_s ,$$

with $M$ a $(\P, \bF)$-martingale.  Setting $\gamma_t =  e^{-\int_{s}^t (\widetilde \lambda^{s,x}_u  -1) \ud u }$ by the product rule  we obtain
$$ f(\widetilde \lambda^{s,x}_t) \gamma_t = f(x)  + \int_s^t  \widetilde \L f(\widetilde \lambda^{s,x}_u)  \gamma_u  \ud u -  \int_s^t  f(\widetilde \lambda^{s,x}_u)  (\widetilde \lambda^{s,x}_u -1) \gamma_u \ud u + \int_s^t \gamma_u \ud M_u$$
and, taking the expectation, we obtain
$$\bE[ f(\widetilde \lambda^{s,x}_t) \gamma_t ] = f(x)  + \int_s^t  \bE[\widetilde \L f(\widetilde \lambda^{s,x}_u)  \gamma_u ] \ud u -  \int_s^t  \bE[ f(\widetilde \lambda^{s,x}_u)  (\widetilde \lambda^{s,x}_u -1) \gamma_u ] \ud u.$$

Thus for any $f \in {\mathcal D}(\L)$, $\Psi_t(s,x)(f)  := \bE[ f(\widetilde \lambda^{s,x}_t) \gamma_t ]$  solves Equation  \eqref{Z1} and, as a consequence, ${\Psi_t(s,x)(f) \over \Psi_t(s,x)(1)}$ solves the KS-equation between two consecutive jump times given in Equation \eqref{ks1}.

Finally the statement follows by uniqueness of the KS-equation observing that
$$
\frac{\int_0^{+ \infty} \Psi_t(T^\a_{n-1},x)(f) \pi_{T^\a_{n-1} } (\ud x)}{\int_0^{+ \infty} \Psi_t(T^\a_{n-1},x)(1) \pi_{T^\a_{n-1} } (\ud x)}
$$
coincides with the filter at jump time  $T^\a_{n-1}$.
\end{proof}

\begin{remark}\label{Cshot}[Filtering of a  shot noise Cox process]
Taking $\beta =0$ and $\ell(z)=0$ in Equation \eqref{intensity} the claim arrival process $N^\a$ reduces to the Cox process with shot noise intensity considered in Dassios and Jang \cite{Dassios_Jang1}. Denoting by $\L^{SN}$  the Markov generator given by
$$
\L^{SN}  f(\lambda) =  - \alpha  \lambda f'( \lambda) +
\int_0^{+ \infty} [ f(\lambda + z) - f(\lambda) ]  \rho F^\b (\ud z),
$$
in this special case the KS and the Zakai equations are driven by $N^\a$ and are given by
\begin{equation}\label{KSshot}
 \ud \pi_t (f) = \pi_t (\L^{SN} f) \ud s +  \int_0^{+ \infty} \Big ( {\pi_{t^-}(  \lambda f)  \over
\pi_{t^-}(\lambda)} - \pi_{t^-}(f) \Big ) \left( \ud N^\a_t- \pi_{t^-} (\lambda)  \ud t \right),
\end{equation}
and
\begin{equation}
\ud \sigma_t (f) =  \sigma_t (\L^{SN} f) \ud t + \Big( \sigma_{t^-}( \lambda f )   - \sigma_{t^-}(f) \Big)\left ( \ud N^\a_t - \ud t \right),
\end{equation}
respectively.
In particular, the KS-equation between two consecutive jump times coincides with that in the general case in Equation \eqref{ks1} (with $\widetilde \L$ replaced by $\L^{SN}$) while the update at a jump time $T^\a_n$ (see Equation \eqref{ks1jump}) is given  by
\begin{equation}\label{ks1shot}
\pi_{T^\a_n}(f) = {\pi_{T^\a_{n^-}}( \lambda f) \over \pi_{T^\a_{n^-}}( \lambda)}.
\end{equation}
Analogously, the Zakai-equation between two consecutive jump times coincides with that in the general case in Equation \eqref{Z1} (with $\widetilde \L$ replaced by $\L^{SN}$),  while the update at a jump time $T^\a_n$ (see Equation \eqref{Zjump}) is given  by $\sigma_{T^\a_n}(f) = \sigma_{T^\a_{n^-}}( \lambda f)$.
\end{remark}


\section{The reduced optimal control problem under complete information}\label{Optimal_Control}

By the filtering techniques developed in Section \ref{sec:filtering}, the original problem under partial information is now reduced to a complete observation stochastic control problem, which involves only processes adapted or predictable w.r.t. the filtration $\bH$, under $\P$.
The $(\P,\bH)$-predictable projection measure  of  $m^{(1)}(\ud t, \ud z)$  (see Equation \eqref{eqn:m1m2}) associated with the loss process $C$  can be written in terms of the filter $\pi$:
$\pi_{t^-} (\lambda) F^\a(\ud z) \ud t$.
In the sequel we shall denote by $ \widetilde m^\a(\ud t, \ud z)$ the $(\P,\mathbb{H})$-compensated jump-measure
 \begin{equation} \label{cmp}
 \widetilde m^\a(\ud t, \ud z)= m^\a(\ud t, \ud z) - \pi_{t^-} (\lambda) F^\a(\ud z) \ud t.
 \end{equation}
We are now ready to state the analogous of Remark \ref{projection} in $(\P,\mathbb H)$:
\begin{remark}\label{rem:Hmg}
For any $\bH$-predictable random field $\{H(t,z), t \in [0,T], z \in [0, + \infty)\}$ and for $i=1,2$ the following equation holds:
$$
\bE \left[ \int_0^t \int_0^{+\infty} H(s,z)m^\a(\ud s, \ud z) \right] = \bE \left[ \int_0^t \int_0^{+\infty} H(s,z)\pi_{s^-} (\lambda) F^\a(\ud z) \ud s \right], \ t \in [0,T].
$$
Moreover, under the condition
$\bE \left[ \int_0^T \int_0^{+\infty} |H(s,z)| \pi_{s^-} (\lambda) F^\a(\ud z) \ud s \right] <+ \infty,$
the process
$$ \int_0^t \int_0^{+\infty} H(s,z) \widetilde m^\a(\ud s, \ud z), \quad t \in [0,T]$$
is a $(\P, \bH)$-martingale.
\end{remark}

The primary insurer wishes to subscribe a reinsurance contract to optimally control her wealth. The surplus process without reinsurance evolves according to the following equation:
\begin{equation}
\label{eqn:surplus_u=0}
dR_t = c_t\,dt - \int_0^{+\infty} z \,m^\a(\ud t,\ud z), \qquad R_0= R_0\in\mathbb{R}^+,
\end{equation}
where $\{c_t, t\in[0,T] \}$ denotes the insurance premium, which is assumed to be $\mathbb H$-predictable and such that $\mathbb E \left[ \int_0^T c_t dt \right]< +\infty$ and $R_0$ is the initial capital.
 The primary insurer subscribes a generic reinsurance contract, that is characterized by the retention function $\Phi$, which is an $\bH$-predictable random field, in general. We assume that the insurer can choose any reinsurance arrangement in a given class of admissible contracts, which is a family of functions of $z\in[0,+\infty)$ representing the retained loss. For practical applications, we suppose that the contracts are parametrized by a $n$-uple $u$ (the control) taking values in $U\subseteq\overline{\mathbb{R}}^n$, with $n\in\mathbb{N}$ and $\overline{\mathbb{R}}$ denoting the compactification of $\mathbb{R}$. 
Under an admissible strategy $u\in \mathcal{U}$ (the definition of admissibility set $\mathcal{U}$ will be given in Definition \ref{def:U}),
she retains the amount $\Phi(Z^\a_j,u_{T^\a_j})$ of the $j$-th claim, while the remaining $Z_j^{(1)} - \Phi(Z^\a_j,u_{T^\a_j})$ is paid by the reinsurer.

We suppose that $\Phi(z,u)$ is continuous in $u$ and there exist at least two points $u_N,u_M\in U$ such that
\[
0\le\Phi(z,u_M) \le \Phi(z,u) \le \Phi(z,u_N) = z \qquad \forall (z,u)\in[0,+\infty)\times U,
\]
so that $u=u_N$ corresponds to null reinsurance, while $u=u_M$ represents the maximum reinsurance protection. Notice that $u_M$ corresponds to full reinsurance when applicable.


\begin{example}\label{ex_reinsurance}
We can show how standard reinsurance contracts fit our model formulation.
\begin{enumerate}
\item Under proportional reinsurance, the insurer transfers a percentage $(1-u)$ of any future loss to the reinsurer, so we set
\[
\Phi (z,u) = uz, \qquad u \in [0,1].
\]
Selecting the scalar $u\in[0,1]=:U$ is equivalent to choosing the retention level of the contract. Notice that here $u_N=1$ means no reinsurance and $u_M=0$ is full reinsurance.
\item Under an excess-of-loss reinsurance policy, the reinsurer covers all the losses exceeding a retention level $u$, hence we fix the class of all the functions with this form:
\[
\Phi (z,u) = u \wedge z, \qquad u \in [0, +\infty ].
\]
So, here $U:=[0,+\infty]$, $u_N=+\infty$ and $u_M=0$ is full reinsurance.
\item Under a limited stop-loss reinsurance, for any claim the reinsurer covers the losses exceeding a threshold $u_1$, up to a maximum level $u_2>u_1$, so that the maximum loss is limited to $(u_2-u_1)$ on the reinsurer's side. In this case:
\[
\Phi (z,u) = z-  (z- u_1 )^{+} + (z- u_2 )^{+},
\]
so that $U=\{ (u_1,u_2): u_1\ge0, u_2\in[u_1,+\infty] \}$ and $u=(u_1,u_2)$. Clearly, we have that $u_M = (u_{M,1}, u_{M,2})=(0,+\infty)$ and $u_N$ can be any point on the line $u_1=u_2$.
A particular case is the so-called limited stop-loss with fixed reinsurance coverage, in which $u_2 = u_1 + \beta$, $\beta >0$. Here $U=[0,+\infty]$, $u_N=+ \infty$ and $u_M = 0$ corresponds to the maximum reinsurance coverage $\beta$.
\end{enumerate}
\end{example}


Clearly the insurer will have to pay a reinsurance premium $q^u=\{q^u_t, t\in[0,T] \}$, which depends on the strategy $u$. We assume that the reinsurance premium  admits the following representation:
\begin{equation}\label{eqn:q_of_u}
q^u_t (\omega) = q (t, \omega , u ) \quad \forall (t, \omega , u ) \in [0,T] \times \Omega \times U
\end{equation}
for a given function $q (t, \omega , u )\colon[0,T] \times \Omega \times U \rightarrow [0,+\infty) $ continuous in $u$, $\mathbb H$-predictable and with continuous partial derivatives $\frac{\partial q (t, \omega , u )}{\partial u_i} $, $i=1,\dots,n$.
We assume that, for any $ t\in [0,T] \times \Omega$
\[
q (t, \omega , u_N )= 0, \quad q (t, \omega , u ) \leq q (t, \omega , u_M ), \quad \forall u\in U,
\]
since a null protection is not expensive and the maximum reinsurance is the most expensive. 
In the following $q^u$ will denote the reinsurance premium associated with the dynamic reinsurance strategy $\{u_t, t \in [0,T] \}$. Notice that both insurance and reinsurance premia are assumed to be $\mathbb{H}$-predictable, since insurer and reinsurer share the same information.  Finally, we require the following integrability condition:
$$ \mathbb{E}\Big[ \int_0^T q^{u_M}_t \ud t \Big] <+ \infty,$$
which ensures that for any $u \in\mathcal{U}$,
$
\mathbb{E} \left[ \int_{0}^{T} q^u_s ds \right] <+ \infty.
$

\begin{example}[Expected value principle] \label{ex_EVP}
Under any admissible reinsurance strategy $u\in\mathcal{U}$, the expected cumulative losses covered by the reinsurer in the interval $[0,t]$ are given by
\[
 \mathbb{E}\left[ \int_0^t \int_0^{+\infty} ( z - \Phi(z,u_s)) \, m^\a( \ud s , \ud z) \right] = \mathbb{E}\left[ \int_0^t \int_0^{+\infty} ( z - \Phi(z,u_s)) \, \pi_{s^-} (\lambda) F^\a(\ud z) \ud s \right].
 \]
 According to the expected value principle, the premium $q^u$ applied by the reinsurer  has to satisfy
\[
 \begin{split}
\mathbb{E}\left[ \int_0^t  q_s^u \, \ud s \right] &= (1 + \theta_R) \mathbb{E}\left[ \int_0^t \int_0^{+\infty} ( z - \Phi(z,u_s)) \, \pi_{s^-} (\lambda) F^\a(\ud z) \ud s \right], \quad \forall u\in\mathcal{U}, \forall t \in [0,T],
\end{split}
\]
where $\theta_R>0$ denotes the safety loading applied by reinsurer.
Thus
\begin{equation} \label{ex_EVP1}
q_t^u = (1+ \theta _R ) \pi _{t^- }(\lambda) \int_0^{+\infty}   \left(  z - \Phi(z,u_t) \right) F^\a(\ud z).
\end{equation}

\end{example}

Summarizing, the surplus process with reinsurance evolves according to
\begin{equation}
\label{eqn:surplus}
dR^u_t = \left( c_t-q^u_t \right) dt - \int_0^{+\infty} \Phi(z, u_t) \, m^\a(dt,dz), \qquad R^u_0= R_0\in\mathbb{R}^+.
\end{equation}

Let us observe that
$$\int_0^t \int_0^{+\infty} \Phi(z, u_s) \widetilde m^\a(\ud s,\ud z), \quad t \in [0,T]$$
turns out to be a $(\P, \bH)$-martingale, because
\begin{equation*}
\begin{split}
\mathbb{E}\left[ \int_0^T \int_0^{+\infty} \Phi(z, u_s) \pi_{s^-} (\lambda) F^\a(\ud z) \ud s \right]
 &\le \mathbb{E}\left[ \int_0^T \int_0^{+\infty} z \, \pi_{s^-} (\lambda) F^\a(\ud z) \ud s \right] = \mathbb{E}\left[Z^\a\right] \mathbb{E} \left[ \int_0^T \lambda_s \ud s \right]
\end{split}
\end{equation*}\\
is finite, since Proposition \ref{mom} holds, and Remarks \ref{nuovo}, \ref{rem:Hmg} apply.\\

The insurance company invests its surplus in a risk-free asset with constant interest rate $r>0$, so that for any reinsurance strategy $u\in\mathcal{U}$ the wealth dynamics is
\begin{equation}
\label{eqn:X}
dX^u_t = dR^u_t + rX^u_t\,dt, \qquad X^u_0= R_0\in\mathbb{R}^+ ,
\end{equation}
whose solution is given by
\begin{equation}
\label{eqn:X_explicit}
X^u_t = R_0e^{rt} + \int_0^t e^{r(t-s)} \left( c_s-q^u_s \right) \,ds
-\int_0^t\int_0^{+\infty} e^{r(t-s)} \Phi(z, u_s)  \,m^{(1)}(ds,dz).
\end{equation}

As announced before, the insurer aims at optimally controlling her wealth using reinsurance. More formally, she aims at maximizing the expected exponential utility of terminal wealth, that is:
\[
\sup_{u\in\mathcal{U}}\mathbb{E}\bigl[ 1-e^{-\eta X^u_T} \bigr],
\]
which turns out trivially to be equivalent to the minimization problem:
\begin{equation}
\label{eqn:minpb}
\inf_{u\in\mathcal{U}}\mathbb{E}\bigl[ e^{-\eta X^u_T} \bigr],
\end{equation}
where $\eta>0$ denotes the insurer's risk aversion.

\begin{definition}
\label{def:U}
We define by $\mathcal{U}$ the class of admissible strategies, which are all the $U$-valued and $\mathbb{H}$-predictable processes, $\{u_t, t\in[0,T]\}$,  such that $\mathbb{E}\bigl[ e^{-\eta X^u_T} \bigr] < +\infty$.
Given $t \in [0,T]$, we will denote by $\mathcal{U}_t$ the class $\mathcal{U}$ restricted to the time interval $[t,T]$.
\end{definition}

Clearly, the admissible strategies must be $\mathbb{H}$-predictable, since they are based on the information at disposal.
The next assumptions are required in the sequel.
\begin{assumption}\label{ass_app_premium}
	We assume that for every $a >0$
	\begin{itemize}
	\item[i)]
	$ \mathbb E \left[ e^{a \ell(Z^\a)} \right] <+ \infty, \quad \mathbb E  \left[ e^{a Z^\a} \right] <+ \infty, \quad \mathbb E  \left[ e^{a Z^\b} \right] <+ \infty.$
	\item[ii)]
	$
	\mathbb E\left[ e^{a \int_0^T q^{u_M}_t\,dt} \right] <+ \infty.
	$
	\end{itemize}
\end{assumption}

\begin{lemma}
\label{lemma:expCfinito}
Under Assumption \ref{ass_app_premium} i) for every $a>0$ we have that $\mathbb{E}[e^{a C_T}] < +\infty$.
\end{lemma}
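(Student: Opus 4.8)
The plan is to pull the computation back to the reference measure $\Q$ of Section~\ref{subsec:model}. Under $\P$ the variable $C_T=\sum_{j=1}^{N^\a_T}Z^\a_j$ is not a compound Poisson variable: $N^\a$ carries the stochastic, self-exciting intensity $\lambda$ and, after the change of measure, the marks $\{Z^\a_n\}$ are no longer independent of $N^\a$. Under $\Q$, instead, $N^\a$ is a standard Poisson process, $N^\b$, $\{Z^\a_n\}$, $\{Z^\b_n\}$ are independent of it, and $C_T$ is a genuine compound Poisson random variable. Since $L_T$ is the Radon--Nikodym density $\ud\P/\ud\Q$ on $\F_T$ with $L_T$ given by \eqref{eqn:L}, and $C_T$ is $\F_T$-measurable, I would start from
\[
\mathbb{E}\bigl[e^{aC_T}\bigr]=\mathbb{E}^{\Q}\bigl[L_T\,e^{aC_T}\bigr].
\]

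Next, since $\lambda_s\ge 0$ one has $L_T=e^{\int_0^T(1-\lambda_s)\ud s}\,e^{\int_0^T\log\lambda_{s^-}\,\ud N^\a_s}\le e^{T}\,e^{\int_0^T\log_+(\lambda_{s^-})\,\ud N^\a_s}$, and the exponent of the last factor together with $aC_T$ can be written as a single stochastic integral against $m^\a$, namely $\int_0^T\int_0^{+\infty}\bigl(\log_+(\lambda_{s^-})+az\bigr)\,m^\a(\ud s,\ud z)$. The integrand $\phi(s,z):=\log_+(\lambda_{s^-})+az$ is nonnegative and $\bF$-predictable, and under $\Q$ the compensator of $m^\a$ is $F^\a(\ud z)\,\ud s$; applying Lemma~\ref{lemma:exp_predictable} then bounds the $\Q$-expectation of $e^{\int_0^T\int\phi\,\ud m^\a}$ by $\mathbb{E}^{\Q}[e^{\int_0^T\int(e^{\phi(s,z)}-1)F^\a(\ud z)\,\ud s}]$. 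Since
\[
\int_0^{+\infty}\bigl(e^{\phi(s,z)}-1\bigr)F^\a(\ud z)=\max\{\lambda_{s^-},1\}\,\mathbb{E}^{\Q}[e^{aZ^\a}]-1\le G\,(\lambda_{s^-}+1),
\]
with $G:=\mathbb{E}^{\Q}[e^{aZ^\a}]=\mathbb{E}[e^{aZ^\a}]<+\infty$ by Assumption~\ref{ass_app_premium}~i) (the law of $Z^\a$ being unaffected by the change of measure), this yields $\mathbb{E}[e^{aC_T}]\le e^{T(1+G)}\,\mathbb{E}^{\Q}\bigl[e^{G\int_0^T\lambda_s\,\ud s}\bigr]$.

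It remains to bound this last expectation. The estimate \eqref{eq:lambda_bound}, which is purely algebraic and hence holds under $\Q$ as well, gives $\int_0^T\lambda_s\,\ud s\le T\max\{\lambda_0,\beta\}+T\sum_{j=1}^{N^\a_T}\ell(Z^\a_j)+T\sum_{j=1}^{N^\b_T}Z^\b_j$, so by the mutual independence under $\Q$,
\[
\mathbb{E}^{\Q}\bigl[e^{G\int_0^T\lambda_s\,\ud s}\bigr]\le e^{GT\max\{\lambda_0,\beta\}}\,\mathbb{E}^{\Q}\bigl[e^{GT\sum_{j=1}^{N^\a_T}\ell(Z^\a_j)}\bigr]\,\mathbb{E}^{\Q}\bigl[e^{GT\sum_{j=1}^{N^\b_T}Z^\b_j}\bigr].
\]
By Lemma~\ref{lemma:exp_predictable2} the two factors equal $\exp\bigl(T(\mathbb{E}^{\Q}[e^{GT\ell(Z^\a)}]-1)\bigr)$ and $\exp\bigl(\rho T(\mathbb{E}^{\Q}[e^{GTZ^\b}]-1)\bigr)$, and both are finite by Assumption~\ref{ass_app_premium}~i). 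Chaining the three displays gives $\mathbb{E}[e^{aC_T}]<+\infty$.

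The step I expect to be delicate is the reduction via Lemma~\ref{lemma:exp_predictable}: it is crucial to keep $\log_+(\lambda_{s^-})$ (coming from $L_T$) and $az$ (coming from $C_T$) inside the \emph{same} stochastic integral, so that the lemma produces an exponential functional that is \emph{linear} in $\lambda$. Handling the two contributions separately, e.g.\ by Hölder's inequality, would instead lead to a term of the form $e^{c\int_0^T\lambda_s^2\,\ud s}$, for which \eqref{eq:lambda_bound} only gives $\mathbb{E}^{\Q}[e^{cT\Lambda^2}]$ with $\Lambda$ essentially compound Poisson, and this need not be finite under the sole assumption of finite exponential moments (rather than Gaussian-type moments) of the jump sizes. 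A minor technical point is to check that Lemma~\ref{lemma:exp_predictable} is available for random fields $\phi(s,z)$ and not merely for predictable processes; otherwise a short separate argument is needed to absorb the mark-dependent term $az$.
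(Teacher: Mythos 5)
Your proof is correct and follows essentially the same route as the paper's: pass to $\Q$ via $L_T$, absorb $e^{-\int_0^T(\lambda_s-1)\,ds}\le e^T$ into a constant, bundle $\ln\lambda_{s^-}$ (or your $\log_+\lambda_{s^-}$, which gives a slightly looser but equally valid bound) and $az$ into a \emph{single} integral against $m^\a$, apply the exponential formula for Poisson random measures, and finish with the finiteness of $\mathbb{E}^{\Q}\bigl[e^{c\int_0^T\lambda_s\,ds}\bigr]$. The technical caveat you flag at the end is already resolved in the paper exactly as you anticipate: it invokes the random-field version (Lemma~\ref{lemma:exp_predictable2}) directly, and for the last step cites Equation~\eqref{eq:finite_exp_Q_intL} rather than re-deriving it as you do, though your derivation is the same as the paper's proof of that inequality.
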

\begin{proof}
See Appendix \ref{app:useful_res}.
\end{proof}

\begin{remark}
Usually insurance companies apply a maximum policy $D>0$, i.e., they only repay claims up to the amount $D$ to the policyholders. In this setting, claims' sizes are of the form $min\{Z^\a_n, D\} \leq D$, hence condition $\mathbb E  \left[ e^{a Z^\a} \right] <+ \infty$ in Assumption \ref{ass_app_premium} is trivially satisfied.
\end{remark}

The class of admissible strategies is non empty, as shown by the next result.
\begin{proposition}
\label{prop:admissibleproc}
Under Assumption \ref{ass_app_premium}, every $\mathbb{H}$-predictable process $\{u_t, t\in[0,T]\}$ with values in $U$ is admissible.
\end{proposition}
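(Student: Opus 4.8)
The goal is to show that every $\mathbb H$-predictable, $U$-valued process $u$ belongs to $\mathcal U$, i.e. that $\mathbb E[e^{-\eta X^u_T}]<+\infty$. Recalling the explicit expression \eqref{eqn:X_explicit} for $X^u_T$ and discarding the deterministic factor $e^{-\eta R_0e^{rT}}$, it suffices to control
\[
\mathbb E\!\left[ \exp\!\Big( -\eta\!\int_0^T e^{r(T-s)}\big(c_s-q^u_s\big)\,ds + \eta\!\int_0^T\!\!\int_0^{+\infty} e^{r(T-s)}\Phi(z,u_s)\,m^\a(ds,dz) \Big)\right].
\]
Since $c_s\ge 0$ and $q^u_s\le q^{u_M}_s$, the drift term is bounded above by $\eta\int_0^T e^{r(T-s)}q^{u_M}_s\,ds\le \eta e^{rT}\int_0^T q^{u_M}_s\,ds$, which is harmless because of Assumption \ref{ass_app_premium} ii). Similarly, since $0\le\Phi(z,u_s)\le z$ and $e^{r(T-s)}\le e^{rT}$, the jump term is bounded above by $\eta e^{rT}\int_0^T\!\int_0^{+\infty} z\,m^\a(ds,dz)=\eta e^{rT}C_T$. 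Hence it is enough to bound $\mathbb E[e^{a_1 C_T + a_2\int_0^T q^{u_M}_s ds}]$ for suitable positive constants $a_1,a_2$.

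First I would split this expectation by the Cauchy--Schwarz inequality into $\mathbb E[e^{2a_1 C_T}]^{1/2}\,\mathbb E[e^{2a_2\int_0^T q^{u_M}_s ds}]^{1/2}$. The second factor is finite directly by Assumption \ref{ass_app_premium} ii). For the first factor I would invoke Lemma \ref{lemma:expCfinito}, which gives $\mathbb E[e^{a C_T}]<+\infty$ for every $a>0$ under Assumption \ref{ass_app_premium} i). Assembling these two bounds yields $\mathbb E[e^{-\eta X^u_T}]<+\infty$, which is exactly the admissibility condition in Definition \ref{def:U}, and this holds for \emph{every} $\mathbb H$-predictable $U$-valued $u$, since none of the bounds used depends on the particular choice of $u$.

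The only subtlety — and the place where one must be a little careful rather than the genuinely hard part — is that the premium term $q^u_s$ enters with a minus sign in the wealth but is nonnegative, so it must be bounded \emph{above} by the deterministic-in-law majorant $q^{u_M}_s$ (using the standing assumption $q(t,\omega,u)\le q(t,\omega,u_M)$), whereas the insurance premium $c_s$, also entering with a minus sign after the overall sign flip in $e^{-\eta X^u_T}$, contributes a term $e^{+\eta\int_0^T e^{r(T-s)}c_s ds}\le$ — wait, that has the wrong sign: $-\eta X^u_T$ contains $+\eta\int_0^T e^{r(T-s)}q^u_s ds$ and $-\eta\int_0^T e^{r(T-s)}c_s ds\le 0$, so $c$ only helps. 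Thus the estimate reduces cleanly to the two ingredients above. I expect the main (minor) obstacle to be tracking these signs and the boundedness direction $\Phi(z,u)\le z$ correctly; the heavy analytic work has already been absorbed into Lemma \ref{lemma:expCfinito} and Assumption \ref{ass_app_premium}.
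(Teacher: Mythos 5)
Your proof is correct and takes essentially the same approach as the paper, which simply cites Lemma \ref{lemma:expCfinito} and defers the routine bookkeeping to Brachetta and Ceci \cite[Prop. 2.2]{Brachetta_Ceci_2020}. You have filled in exactly those details: bound $e^{-\eta X^u_T}$ by $e^{-\eta R_0 e^{rT}} e^{\eta e^{rT}\int_0^T q^{u_M}_s\,ds}\, e^{\eta e^{rT}C_T}$ using $c\ge 0$, $q^u\le q^{u_M}$ and $\Phi(z,u)\le z$, then split by Cauchy--Schwarz and invoke Assumption \ref{ass_app_premium} ii) and Lemma \ref{lemma:expCfinito}.
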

\begin{proof}
Thanks to Lemma \ref{lemma:expCfinito}, the proof is basically the same as in Brachetta and Ceci \cite [Prop. 2.2, pag. 4]{Brachetta_Ceci_2020}.
\end{proof}


\section{The value process and its BSDE characterization}\label{Value_Process}

In this section we study the value process associated to the problem in Equation \eqref{eqn:minpb}. Let us introduce the Snell envelope for any $u\in\mathcal{U}$:
\begin{equation}
\label{eqn:W^u}
W^u_t = \essinf_{\bar{u}\in\mathcal{U}(t,u)}
{\mathbb{E}\biggl[e^{-\eta X^{\bar{u}}_T}\mid \mathcal{H}_t\biggr]} , \forall t \in [0,T]
\end{equation}
with $\mathcal{U}(t,u)$ defined, for an arbitrary control  $u\in\mathcal{U}$, as the restricted class of controls almost surely equal to $u$ over $[0,t]$
\begin{equation*}
\mathcal U(t,u):=\Big\{ \bar u \in \mathcal U: \bar{u}_s = u_s \ \text{a.s.} \ \text{for all} \ s\le t \le T  \Big\}.
\end{equation*}

Denoting by $\bar{X}^u_t=e^{-rt}X^u_t$ the discounted wealth:
\begin{equation}
\label{eqn:X_disc}
\bar X^u_t = R_0 + \int_0^t e^{-rs}\left( c_s-q^u_s \right) \,ds
-\int_0^t\int_0^{+\infty} e^{-rs} \Phi(z, u_s) \,m^\a(ds,dz),
\end{equation}
and introducing the value process as follows,
\begin{equation}
\label{eqn:V}
V_t = \essinf_{\bar{u}\in\mathcal{U}_t}{\mathbb{E}\biggl[e^{-\eta e^{rT}(\bar{X}^{\bar{u}}_T-\bar{X}^{\bar{u}}_t)}\mid \mathcal{H}_t\biggr]}, \forall t \in [0,T]
\end{equation}
(where $\mathcal{U}_t$ is introduced in Definition \ref{def:U}) we can show that $\forall u \in \mathcal{U}$
\begin{equation}\label{eqn:W^u_and_V}
W^u_t = e^{-\eta \bar{X}^u_t e^{rT}} V_t ,
\end{equation}
and, in turn, choosing null reinsurance, i.e. $u_t=u_N$, for any $t \in [0,T]$, we get
\begin{equation}
\label{eqn:VisWI}
V_t = e^{\eta \bar{X}^N_t e^{rT}} W^N_t, \forall t \in [0,T],
\end{equation}
where $\bar{X}^N$ and $W^N$ denote the discounted wealth and the Snell envelope in Equations \eqref{eqn:X_disc} and \eqref{eqn:W^u}, respectively, associated to null reinsurance.
Our aim is to develop a BSDE characterization for the process $\{W^N_t, t \in [0,T] \}$ which also provides a complete description of the value process $\{V_t, t \in [0,T]\}$ in Equation \eqref{eqn:V}.

The following definitions will play a key role for our BSDE characterization and its solution.
\begin{definition}
We define three classes of stochastic processes:
\begin{itemize}
\item $\mathcal{S}^2$ denotes the space of c\`adl\`ag $\mathbb{H}$-adapted processes $Y$ such that:
\[
\mathbb{E}[ (\sup_{t\in[0,T]} |Y_t|)^2] < +\infty.
\]
\item $\mathcal{L}^2$ denotes the space of c\`adl\`ag $\mathbb{H}$-adapted processes $Y$ such that:
\[
\mathbb{E} \left[ \int_0^T |Y_t|^2 dt \right] <+\infty.
\]
\item $\widehat{\mathcal{L}}^2$ denotes the space of $[0,+\infty)$-indexed $\mathbb{H}$-predictable random fields $\Theta=\{ \Theta_t(z), t \in [0,T], z \in [0, + \infty)\}$ such that:
\[
\mathbb{E} \left[ \int_0^T\int_0^{+\infty} \Theta_t^2(z) \pi_{t^-}(\lambda) F^\a(dz)  \,dt \right] <+\infty.
\]
\end{itemize}
\end{definition}

\begin{definition}
\label{def:spacegen}
We define
\[
\mathbb{M} = \{ (t,\omega,y,\theta(\cdot)): (t,\omega,y)\in[0,T]\times\Omega\times[0,+\infty) \text{ and } \theta\colon[0,+\infty)\to\mathbb{R} \ \textrm{measurable} \}.
\]
and, similarly, we denote by $\mathbb{M}^u$ the same set augmented with the variable \aC{$u\in U$}, i.e.,
\[
\mathbb{M}^u = \{ (t,\omega,y, \theta(\cdot),u): (t,\omega,y,u)\in[0,T]\times\Omega\times[0,+\infty)\times U \text{ and } \theta\colon[0,+\infty)\to\mathbb{R} \ \textrm{measurable} \}.
\]
\end{definition}

\begin{definition}
Let $\xi$ be an $\H_T$-measurable random variable. A solution to a BSDE  driven by the compensated random measure $\widetilde m^\a(\ud t, \ud z)$ given in Equation \eqref{cmp} and generator $g$ is a pair $(Y,  \Theta^Y) \in \mathcal{L}^2 \times \widehat{\mathcal{L}}^2$ such that
\[
Y_t = \xi + \int_t^T g( s, Y_s, \Theta^Y_s(\cdot)) \ud s  - \int_t^T \int_{0}^{+\infty} \Theta^Y_s (z) \widetilde m^\a(\ud s, \ud z), \quad t \in [0,T], \ \P-a.s.,
\]
where
$g(t, \omega , y, \theta (\cdot ))$ is a real-valued function on $\mathbb{M}$ 
which is $\mathbb{H}$-predictable w.r.t. $(t, \omega) \in [0,T] \times \Omega$.
\end{definition}

We first give some preliminary results.
\begin{proposition}\label{stima}
Under Assumption \ref{ass_app_premium} i), we have that
\begin{equation}\label{nuova}
 0< M^\a_t  \leq W^N_t \leq M^\b_t, \quad t \in [0,T],
	\end{equation}
where  $M^{(i)}$, $i =1,2$,  are the following $(\P,\mathbb{H})$-martingales
\begin{equation}\label{nuovaMG} M^\a_t = e^{-\eta R_0e^{rT}}
	\mathbb{E}\biggl[e^{-\eta \int_0^T e^{r(T-s)}c_s\,ds} \mid \mathcal{H}_t\biggr] ,  \quad M^\b_t = \mathbb{E}\biggl[e^{\eta e^{rT}C_T}\mid \mathcal{H}_t\biggr], \quad t \in [0,T].
\end{equation}
Moreover,
\begin{equation} \label{eqn:supfinito}
\mathbb{E}[ (\sup_{t\in[0,T]} W^N_t)^2] < +\infty.
\end{equation}
\end{proposition}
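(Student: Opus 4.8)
The plan is to establish the three assertions in the order: (i) positivity $0 < M^{(1)}_t$, (ii) the two-sided bound $M^{(1)}_t \le W^N_t \le M^{(2)}_t$, and (iii) the $\mathcal{S}^2$-type estimate \eqref{eqn:supfinito}. First I would observe that $M^{(1)}$ is strictly positive because it is a conditional expectation of a strictly positive random variable (an exponential), and that both $M^{(1)}$ and $M^{(2)}$ are genuine $(\P,\mathbb{H})$-martingales: $M^{(1)}$ because $\mathbb{E}[\int_0^T c_s\,ds] < +\infty$ makes the integrand bounded below by a constant and with finite expectation after the exponential (using Assumption \ref{ass_app_premium} i) is not even needed here, only the integrability of $c$), and $M^{(2)}$ because $\mathbb{E}[e^{\eta e^{rT} C_T}] < +\infty$ by Lemma \ref{lemma:expCfinito}.

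For the lower bound, I would fix an arbitrary $\bar u \in \mathcal{U}_t$ and bound $X^{\bar u}_T$ from above. From the explicit representation \eqref{eqn:X_explicit}, since $\Phi(z,u)\ge 0$ and $q^{\bar u}_s \ge 0$, we get $X^{\bar u}_T \le R_0 e^{rT} + \int_0^T e^{r(T-s)} c_s\,ds$, hence $e^{-\eta X^{\bar u}_T} \ge e^{-\eta R_0 e^{rT}} e^{-\eta \int_0^T e^{r(T-s)} c_s\,ds}$. Taking $\mathbb{E}[\,\cdot \mid \mathcal{H}_t]$ and then the essential infimum over $\bar u$ gives $W^N_t \ge M^{(1)}_t$; the strict positivity $0 < M^{(1)}_t$ was already noted. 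For the upper bound I would use the fact that null reinsurance $u_t \equiv u_N$ is itself an admissible strategy (Proposition \ref{prop:admissibleproc}), so $W^N_t \le \mathbb{E}[e^{-\eta X^N_T}\mid\mathcal{H}_t]$; since $\Phi(z,u_N)=z$ and $q^{u_N}=0$, the wealth is $X^N_T = R_0 e^{rT} + \int_0^T e^{r(T-s)} c_s\,ds - \int_0^T\int_0^{+\infty} e^{r(T-s)} z\, m^{(1)}(ds,dz)$, and dropping the nonnegative premium/drift terms and using $e^{r(T-s)}\le e^{rT}$ in the loss integral yields $e^{-\eta X^N_T} \le e^{\eta e^{rT} \int_0^T\int_0^{+\infty} z\,m^{(1)}(ds,dz)} = e^{\eta e^{rT} C_T}$; taking conditional expectation gives $W^N_t \le M^{(2)}_t$. (One must be slightly careful that $c$ could be negative in principle, but since $c$ is only assumed $\mathbb{H}$-predictable with $\mathbb{E}[\int_0^T c_t\,dt]<+\infty$, the constant term $e^{-\eta\int_0^T e^{r(T-s)}c_s\,ds}$ is exactly what appears in $M^{(1)}$, so no sign issue arises in the lower bound; for the upper bound dropping $\int c$ is only legitimate if $c\ge 0$, which is the standard premium assumption — alternatively one keeps the term and absorbs it, noting $\mathbb{E}[e^{\eta e^{rT} C_T - \eta\int c}\mid\mathcal{H}_t]$ is still finite by Hölder together with Assumption \ref{ass_app_premium}.)

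For \eqref{eqn:supfinito}, the bound \eqref{nuova} gives $\sup_{t\in[0,T]} W^N_t \le \sup_{t\in[0,T]} M^{(2)}_t$, so it suffices to show $\mathbb{E}[(\sup_{t} M^{(2)}_t)^2] < +\infty$. Since $M^{(2)}$ is a nonnegative martingale, Doob's $L^p$ maximal inequality with $p=2$ gives $\mathbb{E}[(\sup_t M^{(2)}_t)^2] \le 4\, \mathbb{E}[(M^{(2)}_T)^2] = 4\,\mathbb{E}[e^{2\eta e^{rT} C_T}]$, which is finite by Lemma \ref{lemma:expCfinito} applied with $a = 2\eta e^{rT}$ (this is where the full strength of Assumption \ref{ass_app_premium} i) enters, via Lemma \ref{lemma:expCfinito}). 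The main obstacle — really the only nontrivial point — is making sure the two-sided inequality \eqref{nuova} holds $\P$-a.s.\ simultaneously for all $t$ rather than for each fixed $t$; I would handle this by choosing càdlàg versions of the martingales $M^{(1)}, M^{(2)}$ and of the Snell envelope $W^N$ (which exists by standard optional-projection/supermartingale regularization arguments), so that the pointwise-in-$t$ inequalities, valid on a common null set, extend to all $t$ by right-continuity.
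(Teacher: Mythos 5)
Your argument is correct and follows essentially the same strategy as the paper: bound the terminal wealth $X^{\bar u}_T$ above uniformly in $\bar u$ for the lower bound, use null reinsurance for the upper bound, then invoke Doob's maximal inequality together with Lemma \ref{lemma:expCfinito}. One point in your favour: the paper's printed proof of the lower bound inserts the intermediate inequality $W^N_t \ge \mathbb{E}[e^{-\eta X^N_T}\mid\mathcal{H}_t]$, which reads the wrong way for an $\essinf$ (that would require the infimum to be attained at $u_N$); your version, which bounds $X^{\bar u}_T \le R_0 e^{rT} + \int_0^T e^{r(T-s)} c_s\,ds$ for \emph{every} admissible $\bar u$ and only then takes the essential infimum, is the correct argument. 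For the upper bound you work directly with $W^N_t$ rather than passing through $V_t$ via \eqref{eqn:VisWI} as the paper does; the two routes are equivalent, and both rely, exactly as you flag, on the premium rate $c$ being nonnegative (so that dropping the $\int c$ term increases $e^{-\eta X^N_T}$), a hypothesis the paper uses implicitly without comment. The càdlàg-version remark at the end is a standard technicality that the paper also leaves implicit; your treatment of it is fine.
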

\begin{proof}
The discounted wealth in Equation \eqref{eqn:X_disc} for $u=u_N$ becomes
\[
\bar X^{N}_t = R_0 + \int_0^t e^{-rs}c_s\,ds
-\int_0^t\int_0^{+\infty} e^{-rs} z \,m^\a(ds,dz),
\]
hence Equation \eqref{eqn:V} implies that
$$
0 \le V_t \le \mathbb{E}\biggl[e^{-\eta e^{rT}(\bar{X}^N_T-\bar{X}^N_t)}\mid \mathcal{H}_t\biggr] \le \mathbb{E}\biggl[e^{\eta e^{rT}(C_T-C_t)}\mid \mathcal{H}_t\biggr] \qquad \mathbb{P}-\text{a.s.} \quad \forall t\in[0,T] .
$$
By Equation \eqref{eqn:VisWI}, we get that for any $t\in[0,T]$
\[
W^N_t  \le e^{-\eta \bar{X}^{N}_t e^{rT}}
	\mathbb{E}\biggl[e^{\eta e^{rT}(C_T-C_t)}\mid \mathcal{H}_t\biggr]
		\le \mathbb{E}\biggl[e^{\eta e^{rT}C_T}\mid \mathcal{H}_t\biggr]
		= M^\b_t, \quad \P-a.s.
\]
where $M^\b$ is a $(\P,\mathbb{H})$-martingale.
Moreover, we have that
\[
\begin{split}
W^N_t &= \essinf_{\bar{u}\in\mathcal{U}(t,u_N)}
{\mathbb{E}\biggl[e^{-\eta X^{\bar{u}}_T}\mid \mathcal{H}_t\biggr]} \geq  \mathbb{E}\biggl[e^{-\eta X^{N}_T}\mid \mathcal{H}_t\biggr]\\
	&\ge e^{-\eta R_0e^{rT}}
	{\mathbb{E}\biggl[e^{-\eta \int_0^T e^{r(T-s)}c_s\,ds} \mid \mathcal{H}_t\biggr]} = M^\a_t >0
	\quad \forall t \in [0,T].
\end{split}
\]
\\
To complete the proof, we observe that Doob's martingale inequality implies that
\[
\mathbb{E}[ (\sup_{t\in[0,T]} W^N_t)^2] \le \mathbb{E}[ (\sup_{t\in[0,T]} M^\b_t)^2]
	\le 4 \mathbb{E}[M_T^2] = 4\mathbb{E}\bigl[e^{2 \eta e^{rT}C_T}\bigr] <+\infty,
\]which is finite according to Lemma \ref{lemma:expCfinito}.
\end{proof}

\begin{proposition}[Bellman's Optimality Principle] \label{Bell}
Under Assumption \ref{ass_app_premium}
\begin{itemize}
	\item[i)] $\{W^u_t, t \in [0,T]\}$ in a $(\P,\bH)$-submartingale $\forall u \in \mathcal{U}$;
	
	\item[ii)] $\{W_t^{u^*}, t \in [0,T]\}$ in a $(\P,\bH)$-martingale if and only if $u^* \in \mathcal{U}$ is an optimal control.	\end{itemize}
	\end{proposition}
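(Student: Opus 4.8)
The plan is to carry out the classical stochastic dynamic programming argument; the unboundedness of the intensity $\lambda$ creates no obstacle here, since admissibility of every $\bH$-predictable, $U$-valued control is guaranteed by Proposition~\ref{prop:admissibleproc} (via the exponential-moment Assumption~\ref{ass_app_premium}) and the needed exponential integrability of $C_T$ by Lemma~\ref{lemma:expCfinito}. \textbf{Step 1 (lattice property of the admissible family).} Fix $u\in\mathcal{U}$ and $t\in[0,T]$; the first step is to show that $\{\bE[e^{-\eta X^{\bar u}_T}\mid\H_t]:\bar u\in\mathcal{U}(t,u)\}$ is directed downwards. Given $\bar u^{1},\bar u^{2}\in\mathcal{U}(t,u)$, put $A:=\{\bE[e^{-\eta X^{\bar u^{1}}_T}\mid\H_t]\le\bE[e^{-\eta X^{\bar u^{2}}_T}\mid\H_t]\}\in\H_t$ and define $\bar u^{3}_s:=u_s$ for $s\le t$ and $\bar u^{3}_s:=\mathbbm{1}_{A}\bar u^{1}_s+\mathbbm{1}_{A^{c}}\bar u^{2}_s$ for $s>t$. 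Then $\bar u^{3}$ is $U$-valued and $\bH$-predictable (since $A\in\H_t\subseteq\H_s$ for $s>t$), hence $\bar u^{3}\in\mathcal{U}(t,u)$ by Proposition~\ref{prop:admissibleproc}. Because $\bar u^{1},\bar u^{2},\bar u^{3}$ agree with $u$ on $[0,t]$ and $A\in\H_t$, pulling $\mathbbm{1}_{A}$ out of the integrals over $(t,T]$ in \eqref{eqn:X_disc} gives $e^{-\eta X^{\bar u^{3}}_T}=\mathbbm{1}_{A}\,e^{-\eta X^{\bar u^{1}}_T}+\mathbbm{1}_{A^{c}}\,e^{-\eta X^{\bar u^{2}}_T}$, whence $\bE[e^{-\eta X^{\bar u^{3}}_T}\mid\H_t]=\bE[e^{-\eta X^{\bar u^{1}}_T}\mid\H_t]\wedge\bE[e^{-\eta X^{\bar u^{2}}_T}\mid\H_t]$. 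By the usual essential-infimum argument there is then a sequence $(\bar u^{n})_{n}\subset\mathcal{U}(t,u)$ with $\bE[e^{-\eta X^{\bar u^{n}}_T}\mid\H_t]\downarrow W^u_t$ $\P$-a.s., and $0<W^u_t\le\bE[e^{-\eta X^{u}_T}\mid\H_t]\in L^{1}$, so $W^u$ is an integrable process.

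\textbf{Step 2 (submartingale property, part (i)).} For $s\le t$ one has $\mathcal{U}(t,u)\subseteq\mathcal{U}(s,u)$, hence each $\bar u^{n}$ of Step~1 lies in $\mathcal{U}(s,u)$. Combining the monotone sequence of Step~1, the tower property, and conditional dominated convergence (dominating variable $\bE[e^{-\eta X^{\bar u^{1}}_T}\mid\H_t]$),
\[
\bE\big[W^u_t\mid\H_s\big]=\lim_{n}\bE\big[\bE[e^{-\eta X^{\bar u^{n}}_T}\mid\H_t]\,\big|\,\H_s\big]=\lim_{n}\bE\big[e^{-\eta X^{\bar u^{n}}_T}\mid\H_s\big]\ge W^u_s,
\]
which is the submartingale inequality; I would then pass to a c\`adl\`ag modification in the standard way. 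This gives (i).

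\textbf{Step 3 (optimality characterization, part (ii)).} Two facts set the stage: $\mathcal{U}(T,u^{*})=\{u^{*}\}$, so $W^{u^{*}}_T=e^{-\eta X^{u^{*}}_T}$; and, by \eqref{eqn:W^u_and_V} with $\bar X^{u}_0=R_0$ together with $\H_0$ being $\P$-trivial, $W^u_0$ is a deterministic constant not depending on $u$, equal to $J^{*}:=\inf_{\bar u\in\mathcal{U}}\bE[e^{-\eta X^{\bar u}_T}]$, the value of \eqref{eqn:minpb}. By (i), $t\mapsto\bE[W^{u^{*}}_t]$ is non-decreasing, so $J^{*}=\bE[W^{u^{*}}_0]\le\bE[W^{u^{*}}_T]=\bE[e^{-\eta X^{u^{*}}_T}]$. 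If $u^{*}$ is optimal, the two ends coincide, $t\mapsto\bE[W^{u^{*}}_t]$ is constant, and then for $s\le t$ the nonnegative variable $\bE[W^{u^{*}}_t\mid\H_s]-W^{u^{*}}_s$ has zero mean and hence vanishes a.s.: $W^{u^{*}}$ is a $(\P,\bH)$-martingale. Conversely, if $W^{u^{*}}$ is a martingale, $J^{*}=\bE[W^{u^{*}}_0]=\bE[W^{u^{*}}_T]=\bE[e^{-\eta X^{u^{*}}_T}]$, so $u^{*}$ attains the infimum and is optimal. This proves (ii).

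\textbf{Main obstacle.} The genuinely delicate point is Step~1: one must verify that the concatenated control $\bar u^{3}$ remains admissible (Proposition~\ref{prop:admissibleproc}, hence Assumption~\ref{ass_app_premium}, is exactly what makes this immediate) and that splicing at $t$ produces \emph{exactly} the pointwise minimum of the two conditional costs, which hinges on $A\in\H_t$ and on the pre-$t$ parts of the three controls coinciding. Once the lattice property is available, the remainder is routine submartingale bookkeeping.
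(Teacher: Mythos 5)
Your proof is correct and is the standard dynamic-programming/martingale-optimality argument (directed-downward lattice property of the spliced controls, monotone approximation of the essential infimum, tower property for the submartingale inequality, and the mean-value argument to pass from submartingale to martingale at an optimizer). The paper itself delegates this result to Brachetta and Ceci \cite[Proposition 3.2]{Brachetta_Ceci_2020}, whose proof proceeds along exactly these lines, so your argument matches the paper's intended route, with the admissibility of the pasted control indeed supplied by Proposition~\ref{prop:admissibleproc} under Assumption~\ref{ass_app_premium} as you note.
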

\begin{proof}
The proof follows the same lines of Brachetta and Ceci \cite[Proposition 3.2]{Brachetta_Ceci_2020}.
\end{proof}

\begin{remark}
By Proposition \ref{Bell} since $u = u_N \in \mathcal{U}$, $\{W^N_t, t \in [0,T]\}$ is a $(\P,\bH)$-submartingale and  $W^N \in \mathcal{S}^2 \subseteq \mathcal{L}^2$ (this follows from Proposition \ref{stima}). As a consequence, by Doob-Meyer decomposition and $(\P,\bH)$-martingale representation theorems, it admits the expression
$$W^N_t = \int_0^t \int_0^{+\infty} \Theta^{W^N}_s(z)  \ \widetilde m^\a (ds,dz) + A_t ,$$
where  $\Theta^{W^N} \in \widehat{\mathcal{L}}^2$  by \eqref{eqn:supfinito} and $\{A_t, t \in [0,T]\}$ in an increasing $(\P,\bH)$-predictable process such that $\mathbb{E} \left[ \int_0^T A^2_t  \ud t\right ] <+ \infty$.
Moreover, $W^N_T= e^{- \eta X^N_T}:= \xi$, and since the wealth associated to null reinsurance, $u=u_N$, is given by
$$X^N_T = R_0 e^{rT} + \int_0^T e^{r(T-t)} c_t \ud t -  \int_0^T \int_0^{+\infty} e^{r(T-t)} z m^\a(\ud t, \ud z) ,$$
we get the inequality $\xi  \leq e^{\eta e^{rT} C_T}$.
Thus Lemma \ref{lemma:expCfinito} guarantees that $\xi$  is a random variable with finite moments of any order.
Summarizing, we obtain that
\[
W^N_t = \xi - \int_t^T \int_0^{+ \infty} \Theta^{W^N}_s (z)  \ \widetilde m^\a (ds,dz) + \int_t^T \ud A_s.
\]
Next step provides an explicit expression for the process $A$ and characterizes $W^N$ and the optimal control via a BSDE approach.
\end{remark}

We now give the main result of this section.

\begin{theorem}\label{T1}
Under Assumption \ref{ass_app_premium},
$(W^N,\Theta^{W^N}) \in \mathcal{L}^2\times \widehat{\mathcal{L}}^2$ is the unique solution the following BSDE
\begin{equation}\label{bsde}
W^N_t = \xi - \int_t^T \int_0^{+ \infty} \Theta^{W^N}_s (z) \ \widetilde m^\a (ds,dz)
	- \int_t^T \esssup_{u \in \mathcal{U}}\widetilde{f} ( s, W^{N}_s , \Theta^{W^N}_s(\cdot ) , u_s )\,ds ,
\end{equation}
with terminal condition $\xi = e^{- \eta X^N_T}$, where
\begin{multline}\label{ftilde}
\widetilde{f} ( t, W^{N}_t , \Theta^{W^N}_t(\cdot ) , u_t ) = -W^N_{t-}\eta e^{r(T-t)} q^u_t \\
	- \int_0^{+\infty} [W^N_{t-}+\Theta^{W^N}_t (z) ] \big[ e^{-\eta e^{r(T-t)}(z-\Phi(z,u_t))} -1 \big]
	\pi _{t_{-} } (\lambda ) F^{(1)} (dz).
\end{multline}
Moreover, the process $u^* \in \mathcal{U}$ which satisfies
\begin{equation}\label{eqn:u*esssup}
\widetilde{f} ( t, W^{N}_t , \Theta^{W^N}_t(\cdot ) , u^*_t )
	= \esssup_{u \in \mathcal{U}}  \widetilde{f} ( t, W^{N}_t , \Theta^{W^N}_t(\cdot ) , u_t )
	\qquad \forall t\in[0,T]
\end{equation}
is an optimal control.
\end{theorem}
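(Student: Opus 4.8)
The plan is to split the proof into three parts: (A) well-posedness of the BSDE \eqref{bsde}, i.e.\ existence and uniqueness of a solution $(\bar Y,\bar\Theta)\in\mathcal L^2\times\widehat{\mathcal L}^2$; (B) the identification $\bar Y=W^N$, which then forces $\bar\Theta=\Theta^{W^N}$ and shows that $(W^N,\Theta^{W^N})$ is the solution; (C) the optimality of any $u^*$ satisfying \eqref{eqn:u*esssup}. The algebraic backbone is the identity $W^u_t=e^{G^u_t}W^N_t$, with $G^u_t:=\eta e^{rT}(\bar X^N_t-\bar X^u_t)$ (a consequence of \eqref{eqn:W^u_and_V}--\eqref{eqn:VisWI}), together with $e^{G^u_T}\xi=e^{-\eta X^u_T}$ and $G^u_t=0$ for $u\in\mathcal U(t,u_N)$. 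Note also that for fixed $(t,\omega,y,\theta(\cdot))$ one has $\esssup_{u\in\mathcal U}\widetilde f(t,y,\theta(\cdot),u_t)=\sup_{v\in U}\widetilde f(t,y,\theta(\cdot),v)=:g(t,\omega,y,\theta(\cdot))$ a.s.: the inequality $\le$ is trivial, while $\ge$ holds because $U$-valued $\mathbb H$-predictable processes lie in $\mathcal U$ (Proposition \ref{prop:admissibleproc}) and, $q$ and $\Phi$ being continuous in $u$ on the compact set $U$, the supremum over $U$ is attained along an $\mathbb H$-predictable selector; hence \eqref{bsde} is a BSDE with the $\mathbb H$-predictable generator $g$.

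\textbf{(A): well-posedness --- the main obstacle.} Writing $\delta^v_t:=\eta e^{r(T-t)}q^v_t\ge0$ and $\gamma^v_t(z):=e^{-\eta e^{r(T-t)}(z-\Phi(z,v))}-1\in[-1,0]$,
\[
g(t,\omega,y,\theta(\cdot))=\sup_{v\in U}\Big\{-y\,\delta^v_t-\pi_{t^-}(\lambda)\int_0^{+\infty}\big(y+\theta(z)\big)\gamma^v_t(z)\,F^\a(\ud z)\Big\},
\]
which is convex in $(y,\theta(\cdot))$ and Lipschitz with coefficient $K_t:=\eta e^{rT}q^{u_M}_t+\pi_{t^-}(\lambda)$ --- but $K$ is \emph{not bounded}, the contagion intensity being a priori unbounded. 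This is where the usual contraction argument for BSDEs driven by a compensated random measure breaks down, and I expect this to be the technically heaviest point. The plan is to exploit the model-specific integrability: $\mathbb E[\int_0^T\lambda_s^k\,\ud s]<\infty$ for every $k$ (Proposition \ref{mom}), $\mathbb E[e^{aC_T}]<\infty$ and $\mathbb E[e^{a\int_0^Tq^{u_M}_s\,\ud s}]<\infty$ for every $a>0$ (Lemma \ref{lemma:expCfinito} and Assumption \ref{ass_app_premium}). With these one can, e.g., truncate $\pi_{t^-}(\lambda)$ at level $n$, solve the resulting (globally Lipschitz) BSDEs, derive uniform $\mathcal S^2\times\widehat{\mathcal L}^2$ estimates and pass to the limit --- or, alternatively, run a contraction on short time intervals with a weight matched to $\int_\cdot^T K_s\,\ud s$ and then patch. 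This produces a unique $(\bar Y,\bar\Theta)$, moreover with $\bar Y\in\mathcal S^2$; the terminal datum is under control since $\xi=e^{-\eta X^N_T}\le e^{-\eta R_0e^{rT}}e^{\eta e^{rT}C_T}\in\bigcap_{p\ge1}L^p$.

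\textbf{(B): $\bar Y=W^N$.} Fix $t\in[0,T]$. For $u\in\mathcal U(t,u_N)$, applying the product rule to $e^{G^u_s}\bar Y_s$ on $[t,T]$ with the dynamics \eqref{bsde} and $\ud G^u_s=\delta^u_s\,\ud s-\int_0^{+\infty}\eta e^{r(T-s)}(z-\Phi(z,u_s))\,m^\a(\ud s,\ud z)$, and then rewriting $m^\a=\widetilde m^\a+\pi_{s^-}(\lambda)F^\a(\ud z)\ud s$, the finite-variation part of $e^{G^u_s}\bar Y_s$ turns out to be $e^{G^u_{s^-}}\big(g(s,\bar Y_s,\bar\Theta_s(\cdot))-\widetilde f(s,\bar Y_s,\bar\Theta_s(\cdot),u_s)\big)\ud s\ge0$. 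Since $G^u$ has non-positive jumps and $q^u\le q^{u_M}$, one has $e^{G^u_s}\le e^{\eta e^{rT}\int_0^Tq^{u_M}_r\,\ud r}\in\bigcap_pL^p$, so with $\bar Y\in\mathcal S^2$ we get $\mathbb E[\sup_{s\in[0,T]}e^{G^u_s}|\bar Y_s|]<\infty$ and $e^{G^u}\bar Y$ is a genuine $(\P,\mathbb H)$-submartingale on $[t,T]$; using $G^u_t=0$ and $e^{G^u_T}\bar Y_T=e^{-\eta X^u_T}$, $\bar Y_t\le\mathbb E[e^{-\eta X^u_T}\mid\mathcal H_t]$, whence $\bar Y_t\le\essinf_{u\in\mathcal U(t,u_N)}\mathbb E[e^{-\eta X^u_T}\mid\mathcal H_t]=W^N_t$. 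For the reverse inequality, pick an $\mathbb H$-predictable $u^*$ attaining $g(s,\bar Y_s,\bar\Theta_s(\cdot))$ for all $s$ (measurable selection: $v\mapsto\widetilde f(s,\bar Y_s,\bar\Theta_s(\cdot),v)$ is continuous on $U$ by dominated convergence, since $\bar\Theta\in\widehat{\mathcal L}^2$); it is admissible (Proposition \ref{prop:admissibleproc}), and along $\bar u:=u_N\mathbbm 1_{[0,t]}+u^*\mathbbm 1_{(t,T]}\in\mathcal U(t,u_N)$ the drift above vanishes, so $e^{G^{\bar u}}\bar Y$ is a true martingale on $[t,T]$ and $\bar Y_t=\mathbb E[e^{-\eta X^{\bar u}_T}\mid\mathcal H_t]\ge W^N_t$. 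Hence $\bar Y=W^N$; subtracting the semimartingale decomposition of $W^N$ recalled after Proposition \ref{Bell}, the process $\int_0^\cdot\!\int_0^{+\infty}(\bar\Theta_s(z)-\Theta^{W^N}_s(z))\,\widetilde m^\a(\ud s,\ud z)$ is simultaneously a local martingale and predictable of finite variation, hence null, so $\bar\Theta=\Theta^{W^N}$ $\ud\P\otimes\pi_{s^-}(\lambda)F^\a(\ud z)\ud s$-a.e. Therefore $(W^N,\Theta^{W^N})\in\mathcal L^2\times\widehat{\mathcal L}^2$ is the unique solution of \eqref{bsde}.

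\textbf{(C): optimality.} Let $u^*\in\mathcal U$ satisfy \eqref{eqn:u*esssup}. By the drift computation of (B) taken with $u=u^*$, the process $W^{u^*}=e^{G^{u^*}}W^N$ has zero drift, hence is a local martingale; it is non-negative and dominated by the uniformly integrable martingale $\{\mathbb E[e^{-\eta X^{u^*}_T}\mid\mathcal H_\cdot]\}$ (its terminal value lies in $\bigcap_pL^p$ because $e^{-\eta X^{u^*}_T}\le e^{-\eta R_0e^{rT}}e^{\eta e^{rT}\int_0^Tq^{u_M}_s\,\ud s}e^{\eta e^{rT}C_T}$, cf.\ Proposition \ref{stima}), so it is a true $(\P,\mathbb H)$-martingale and, by Proposition \ref{Bell}(ii), $u^*$ is optimal. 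Equivalently, evaluating (B) at $t=0$ (where $\mathcal H_0$ is trivial and $\mathcal U(0,u_N)=\mathcal U$) gives $\mathbb E[e^{-\eta X^{u^*}_T}]=W^N_0=\essinf_{u\in\mathcal U}\mathbb E[e^{-\eta X^u_T}]$, i.e.\ $u^*$ realizes the infimum in \eqref{eqn:minpb}.
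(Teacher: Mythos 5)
Your overall architecture coincides with the paper's: the proof of Theorem \ref{T1} in the paper reduces immediately to two separate results, an existence/uniqueness theorem for the BSDE (Theorem \ref{ExUn}) and a verification theorem (Theorem \ref{VT}, resting on Proposition \ref{VT1}). Your parts (B) and (C) are essentially the paper's verification argument: the process you call $e^{G^u}\bar Y$ is exactly the paper's $D\,e^{-\eta\bar X^u e^{rT}}$, the drift computation leading to $g-\widetilde f\ge 0$ is the same, and the integrability bound via $e^{G^u_s}\le e^{\eta e^{rT}\int_0^T q^{u_M}_r\,\ud r}\in\bigcap_p L^p$ together with $\bar Y\in\mathcal S^2$ is the same justification the paper uses for the true (sub)martingale property. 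Your observation that $\esssup_{u\in\mathcal U}=\sup_{v\in U}$ via an $\mathbb H$-predictable selector matches Remark \ref{RC}(ii). So these parts are correct and not genuinely different.

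The difference is in part (A), and here your proposal does not complete the argument. You correctly identify the obstruction --- the stochastic Lipschitz coefficient $K_t=\eta e^{rT}q^{u_M}_t+\pi_{t^-}(\lambda)$ is unbounded --- and correctly list the available integrability (Proposition \ref{mom}, Lemma \ref{lemma:expCfinito}, Assumption \ref{ass_app_premium}, exponential moments of $\int_0^T\pi_s(\lambda)\,\ud s$). But you then propose ``truncate, derive uniform estimates, pass to the limit --- or patch short-time contractions'' without carrying out either program; the uniform a~priori bounds and the passage to the limit in the generator (convergence of $g_n(t,\bar Y^n_t,\bar\Theta^n_t)$) are precisely the delicate points, and they are left unproved. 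The paper avoids this labour by casting the BSDE into the framework of Papapantoleon, Possamai and Saplaouras \cite{Papa_Possa_Sapla2018} for generators satisfying a \emph{stochastic} Lipschitz condition, and then verifying their conditions $\mathbf{(F1)}$--$\mathbf{(F5)}$ using the very same exponential moment bounds you list (see the Appendix \ref{AppendixC} proof of Theorem \ref{ExUn}, where $\gamma_t,\bar\gamma_t$ are exactly your $K$-type coefficients squared). So your plan is reasonable and would plausibly work, but as written (A) is a gap; if you either carry out the truncation scheme in detail or instead invoke the stochastic-Lipschitz BSDE theorem as the paper does, the whole proof goes through.
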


\begin{proof}
Theorem \ref{T1} follows directly by an existence result of a solution to the BSDE \eqref{bsde} (see Theorem \ref{ExUn} below) and a verification result (see Theorem \ref{VT1} below), which imply that any solution to the BSDE \eqref{bsde} coincides with the process $(W^N,\Theta^{W^N})$.
\end{proof}

\begin{remark}\label{RC}
Let us notice that
\begin{itemize}
	\item[i)] the driver of the BSDE \eqref{bsde} is always nonnegative, since via Equation \eqref{ftilde}, we get
\[
\esssup_{u \in \mathcal{U}} \widetilde{f} ( t, W^{N}_t , \Theta^{W^N}_t(\cdot ) , u_t ) \ge
	\widetilde{f} ( t, W^{N}_t , \Theta^{W^N}_t(\cdot ) , u_N ) = 0;
\]
	
	\item[ii)] there exists $u^* \in \mathcal{U}$ which satisfies Equation \eqref{eqn:u*esssup}: by hypothesis $q^u_t$ and
	$\Phi(z,u)$ are continuous on $u \in U$ and $U$ is compact, hence measurability selection results ensure that the maximizer is a $(\P,\bH)$-predictable process and Proposition \ref{prop:admissibleproc} holds.
	\end{itemize}
\end{remark}

\begin{theorem}\label{ExUn}
Under Assumption \ref{ass_app_premium}, there exists a unique solution $(Y,\Theta^{Y}) \in \mathcal{L}^2 \times \widehat{\mathcal{L}}^2$ to the BSDE \eqref{bsde}, i.e.,
\[
Y_t = \xi - \int_t^T \int_0^{+ \infty} \Theta^{Y}_s (z) \widetilde m^\a(\ud s, \ud z)
	+ \int_t^T f ( s, Y_s , \Theta^{Y}_s(\cdot) )\,ds ,
\]
with generator $f\colon\mathbb{M}\to[0,+\infty)$
\begin{eqnarray}
\label{eq:BSDE_gen}
f ( s, y, \theta( \cdot ) ) &=& -\esssup_{u \in \mathcal{U}}\widetilde{f} ( s, y , \theta( \cdot ), u_s ) = -\esssup_{u \in \mathcal{U}} \{ -y \eta e^{r(T-s)} q^u_s \nonumber \\
& & - \int_0^{+\infty} (y +\theta( z )) \big[ e^{-\eta e^{r(T-s)}(z-\Phi(z,u_s))} -1 \big]
	\pi _{s_{-} } (\lambda ) F^{(1)} (dz) \},
\end{eqnarray}
with $\mathbb{M}$ given in Definition \ref{def:spacegen}, and terminal condition $\xi = e^{- \eta X^N_T}$.
\end{theorem}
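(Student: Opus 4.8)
The plan is to prove existence and uniqueness of the solution to the BSDE \eqref{bsde} by a fixed-point argument on an equivalent integral form, after taming the unbounded coefficient $\pi_{s^-}(\lambda)$ through the a priori moment bounds available from Propositions \ref{mom} and \ref{stima} together with Lemma \ref{lemma:expCfinito}. First I would rewrite \eqref{bsde} in the standard form $Y_t = \xi + \int_t^T f(s,Y_s,\Theta^Y_s(\cdot))\,ds - \int_t^T\int_0^{+\infty}\Theta^Y_s(z)\widetilde m^\a(\ud s,\ud z)$ with $f = -\esssup_u \widetilde f$ as in \eqref{eq:BSDE_gen}, and record that $f \ge 0$ (Remark \ref{RC}(i)) and that $f$ is $\mathbb H$-predictable and measurable with respect to $(y,\theta(\cdot))$, the $\esssup$ over $u$ being attained at a predictable selector by the measurable-selection argument in Remark \ref{RC}(ii). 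The terminal datum $\xi = e^{-\eta X^N_T}$ satisfies $0 < M^\a_T \le \xi \le e^{\eta e^{rT}C_T}$, so $\xi \in L^p$ for every $p$ by Lemma \ref{lemma:expCfinito}; in particular $\xi \in L^2(\H_T)$.

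The key analytic step is a Lipschitz-type estimate for $f$ in $(y,\theta(\cdot))$ with a \emph{stochastic} Lipschitz coefficient controlled by $\pi_{s^-}(\lambda)$ and by exponential moments of the claim sizes. Using the elementary bound $0 \le z - \Phi(z,u) \le z$ and $|e^{-x}-1| \le x e^{x}$ for $x \ge 0$, one gets for the integral term a bound of the form $\int_0^{+\infty}(|y-y'| + |\theta(z)-\theta'(z)|)\,\eta e^{r(T-s)}z\, e^{\eta e^{rT} z}\,\pi_{s^-}(\lambda)F^\a(\ud z)$, plus a similar term from $q^u_t \le q^{u_M}_t$ for the $y$-dependence of the premium part. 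Thus $|f(s,y,\theta)-f(s,y',\theta')| \le K_s\big(|y-y'| + \big(\int_0^\infty |\theta(z)-\theta'(z)|^2\pi_{s^-}(\lambda)F^\a(\ud z)\big)^{1/2}\big)$ with $K_s$ an $\mathbb H$-adapted process whose relevant exponential moments are finite by Assumption \ref{ass_app_premium} and Proposition \ref{mom}. I would then run a Picard iteration in the Banach space $\mathcal L^2 \times \widehat{\mathcal L}^2$ equipped with a norm weighted by $e^{\gamma \int_0^t K_s\,ds}$ (or a randomly-weighted $\beta$-norm à la Briand–Delyon–Hu–Pardoux–Stoica adapted to jump BSDEs with stochastic Lipschitz data, e.g. as in the references cited in the paper); the weight is finite because $K$ has exponential moments, and choosing $\gamma$ large makes the Picard map a contraction. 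This yields a unique solution $(Y,\Theta^Y) \in \mathcal L^2 \times \widehat{\mathcal L}^2$. One must also check each Picard iterate stays in the space: starting from $Y^{(0)} = \mathbb E[\xi\mid\H_\cdot]$ and using that the driver is nonnegative and dominated, one propagates the $\mathcal L^2$-bound, invoking \eqref{eqn:supfinito}-type estimates and the martingale representation in $\widehat{\mathcal L}^2$ at each step.

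The main obstacle is precisely that $\pi_{s^-}(\lambda)$ — and hence the Lipschitz coefficient $K_s$ and the terminal condition via $C_T$ — is not bounded, so the classical Lipschitz BSDE theory does not apply off the shelf; everything hinges on replacing boundedness by the exponential integrability provided by Assumptions \ref{nuova1} and \ref{ass_app_premium} (this is the technical point the authors flag in the introduction as the novelty relative to Brachetta–Ceci \cite{Brachetta_Ceci_2020}). A secondary subtlety is verifying that $\Theta^Y$ genuinely lies in $\widehat{\mathcal L}^2$, i.e. that the martingale part has the required integrability against $\pi_{s^-}(\lambda)F^\a(\ud z)\ud s$; this follows from the a priori estimate obtained by applying Itô's formula to $|Y_t|^2$ (or $e^{\gamma\int_0^t K_s\,ds}|Y_t|^2$), using $f \ge 0$ and $f$ dominated by $K_s(|Y_s| + \|\Theta^Y_s\|)$ plus a term in $\pi_{s^-}(\lambda)$, and then Gronwall together with the exponential-moment bounds. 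Uniqueness follows from the same a priori estimate applied to the difference of two solutions. Finally, identifying this unique solution with $(W^N,\Theta^{W^N})$ — and thereby completing Theorem \ref{T1} — is deferred to the verification Theorem \ref{VT1}, so here it suffices to establish existence and uniqueness in $\mathcal L^2 \times \widehat{\mathcal L}^2$.
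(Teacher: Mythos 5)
Your plan---existence and uniqueness for a BSDE whose driver is stochastically Lipschitz, with unboundedness of $\pi_{s^-}(\lambda)$ tamed by exponential moment bounds---is the right idea and matches the paper's strategy at a conceptual level. The paper, however, does not re-run the Picard iteration from scratch: it verifies the hypotheses $\mathbf{(F1)}$--$\mathbf{(F5)}$ of Papapantoleon, Possama\"i and Saplaouras \cite[Theorem~3.5]{Papa_Possa_Sapla2018} and invokes that theorem directly, whereas you propose to reprove the contraction in a randomly-weighted $\beta$-norm. Both routes are built on the same machinery; citing the general theorem is cleaner and avoids having to rederive the precise smallness condition on the weight.

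Two points in your sketch deserve attention. First, your Lipschitz estimate is unnecessarily weak: you bound $\bigl|e^{-\eta e^{r(T-s)}(z-\Phi(z,u))}-1\bigr|$ via $|e^{-x}-1|\le x e^{x}$, introducing a spurious factor $z\,e^{\eta e^{rT}z}$ into the kernel. Since $z-\Phi(z,u)\ge 0$, the much sharper and simpler bound $\bigl|e^{-\eta e^{r(T-s)}(z-\Phi(z,u))}-1\bigr|\le 1$ applies, and this is exactly what the paper uses: it leads to the stochastic Lipschitz coefficients $\gamma_t = 3\eta^2 e^{2r(T-t)}(q^{u_M}_t)^2 + 3\pi_{t^-}^2(\lambda)$ and $\bar\gamma_t = 3\pi_{t^-}(\lambda)$, which are considerably simpler than the ones your bound would produce and are needed to make the verification of integrability conditions tractable. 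Your looser bound is not wrong---the required integrals remain finite under Assumption~\ref{ass_app_premium}\,$i)$---but it obscures the structure and complicates $\mathbf{(F4)}$--$\mathbf{(F5)}$.

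Second, the assertion that ``choosing $\gamma$ large makes the Picard map a contraction'' is not automatic in the stochastic Lipschitz setting and is the crux of the matter. With a random weight $e^{\widehat\beta A_t}$, $A_t=\int_0^t\alpha_s^2\,ds$, one cannot simply send $\widehat\beta\to\infty$ to get a contraction; the framework of \cite{Papa_Possa_Sapla2018} requires both an exponential integrability condition tying $\widehat\beta$ to the terminal datum and driver (the analogue of $\mathbf{(F2)}$, $\mathbf{(F4)}$, $\mathbf{(F5)}$), and a genuine smallness condition $M^{\Phi}(\widehat\beta)<\tfrac12$ that depends on the jump structure of $A$. The paper takes care of this last point by observing that $A$ has no jumps, so $\Delta A_t\le\Phi$ for any $\Phi>0$, and that $M^{\Phi}(\widehat\beta)\to 9e\Phi$ as $\widehat\beta\to\infty$, hence $\Phi<1/(18e)$ suffices. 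Your sketch would need to supply this step (or an equivalent) explicitly; as written, it is the one genuine gap.
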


\begin{proof} The proof is postponed to Appendix \ref{AppendixC}.
\end{proof}

We now wish to provide a verification result.  To this end we recall the following result in Brachetta and Ceci \cite[Proposition 3.4]{Brachetta_Ceci_2020}.

\begin{proposition} \label{VT1}
Suppose there exists an $\mathbb{H}$-adapted process $D$ such that:
\begin{itemize}
\item $D=\{ D_t e^{ - \eta \bar{X}^u_t e^{rT}}, t \in [0, T] \}$ is an $(\P,\bH)$-sub-martingale for any $u \in \mathcal{U}$ and an $(\P,\bH)$-martingale for some $u^* \in \mathcal{U}$;
\item $D_T = 1$.
\end{itemize}
Then $D_t = V_t$ and $u^* $ is an optimal control.
\end{proposition}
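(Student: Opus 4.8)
\emph{Proof strategy.} The plan is to run the standard optional-sampling verification argument, treating the process $\{D_s\,e^{-\eta\bar{X}^{\bar u}_s e^{rT}}\}_{s\in[0,T]}$ as a candidate for (a conditional version of) the value process and reducing everything to the identity \eqref{eqn:W^u_and_V}. First I would record the terminal normalisation: since $D_T=1$ and, by the definition of the discounted wealth preceding \eqref{eqn:X_disc}, $X^u_T=e^{rT}\bar{X}^u_T$, one has
\[
D_T\,e^{-\eta\bar{X}^u_T e^{rT}}=e^{-\eta X^u_T}\qquad\forall\,u\in\mathcal{U},
\]
while $e^{-\eta\bar{X}^u_t e^{rT}}$ is $\mathcal{H}_t$-measurable and strictly positive for every $t\in[0,T]$.

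\emph{Upper bound $D_t\le V_t$.} Fix $t\in[0,T]$ and an arbitrary $\bar u\in\mathcal{U}$. By hypothesis $\{D_s\,e^{-\eta\bar{X}^{\bar u}_s e^{rT}}\}_{s\in[t,T]}$ is a $(\P,\bH)$-submartingale, hence
\[
D_t\,e^{-\eta\bar{X}^{\bar u}_t e^{rT}}\le\mathbb{E}\bigl[D_T\,e^{-\eta\bar{X}^{\bar u}_T e^{rT}}\mid\mathcal{H}_t\bigr]=\mathbb{E}\bigl[e^{-\eta X^{\bar u}_T}\mid\mathcal{H}_t\bigr].
\]
Multiplying by the $\mathcal{H}_t$-measurable factor $e^{\eta\bar{X}^{\bar u}_t e^{rT}}$ and using again $X^{\bar u}_T=e^{rT}\bar{X}^{\bar u}_T$ turns this into $D_t\le\mathbb{E}\bigl[e^{-\eta e^{rT}(\bar{X}^{\bar u}_T-\bar{X}^{\bar u}_t)}\mid\mathcal{H}_t\bigr]$. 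The right-hand side depends on $\bar u$ only through its restriction to $[t,T]$ (by \eqref{eqn:X_disc} and \eqref{eqn:q_of_u}), whereas $D_t$ is independent of $\bar u$; taking the essential infimum over $\bar u\in\mathcal{U}$, which coincides with the one over $\mathcal{U}_t$, and recalling \eqref{eqn:V} yields $D_t\le V_t$ $\P$-a.s.

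\emph{Lower bound and optimality.} Let $u^*\in\mathcal{U}$ be the control for which $\{D_s\,e^{-\eta\bar{X}^{u^*}_s e^{rT}}\}_{s\in[t,T]}$ is a $(\P,\bH)$-martingale. Repeating the previous computation with equality in place of inequality gives $D_t=\mathbb{E}\bigl[e^{-\eta e^{rT}(\bar{X}^{u^*}_T-\bar{X}^{u^*}_t)}\mid\mathcal{H}_t\bigr]$, and since $u^*\in\mathcal{U}_t$ this quantity is one of the terms in the essential infimum defining $V_t$, so $D_t\ge V_t$. Combining the two bounds, $D_t=V_t$ for every $t\in[0,T]$, $\P$-a.s. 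Finally, by \eqref{eqn:W^u_and_V} together with $D=V$,
\[
W^{u^*}_t=e^{-\eta\bar{X}^{u^*}_t e^{rT}}V_t=D_t\,e^{-\eta\bar{X}^{u^*}_t e^{rT}},
\]
which is a $(\P,\bH)$-martingale by the choice of $u^*$; Proposition \ref{Bell} (ii) then identifies $u^*$ as an optimal control.

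\emph{Main difficulty.} There is essentially no obstacle here: this is the textbook verification scheme, and the only points requiring care are the integrability needed to apply optional sampling and to make the conditional expectations meaningful (guaranteed by admissibility together with Lemma \ref{lemma:expCfinito} and Proposition \ref{stima}), and the $\mathcal{H}_t$-measurability used to pull the factor $e^{\pm\eta\bar{X}^{\bar u}_t e^{rT}}$ in and out of $\mathbb{E}[\,\cdot\mid\mathcal{H}_t]$.
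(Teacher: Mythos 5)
Your argument is correct and is the standard verification scheme: the submartingale property forces $D_t\le\mathbb{E}\bigl[e^{-\eta e^{rT}(\bar X^{\bar u}_T-\bar X^{\bar u}_t)}\mid\mathcal{H}_t\bigr]$ for every $\bar u$, the infimum passes because the right side depends only on $\bar u|_{(t,T]}$, the martingale for $u^*$ upgrades the inequality to equality, and Proposition~\ref{Bell} then delivers optimality. The paper itself does not spell out a proof (it cites Brachetta and Ceci, Proposition~3.4), but your reconstruction follows the same route as that reference; the only cosmetic point is that you invoke optional sampling when, for the fixed deterministic times $t<T$ used here, the bare definition of (sub)martingale already suffices, and you write ``$u^*\in\mathcal{U}_t$'' where strictly it is the restriction of $u^*$ to $[t,T]$ that belongs to $\mathcal{U}_t$.
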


\begin{theorem} \label{VT} (Verification Theorem)
Under Assumption \ref{ass_app_premium}, let $(Y,\Theta^{Y}) \in \mathcal{L}^2 \times \widehat{\mathcal{L}}^2$ be a solution to the BSDE \eqref{bsde} and  let $u^*\in {\mathcal U}$ be a process satisfying Equation \eqref{eqn:u*esssup}. Then $Y$ coincides with $W^N$,
\[
V_t = e^{\eta \bar{X}^N_t e^{rT}} Y_t \qquad \forall t \in [0,T],
\]
and  $u^*$ is an optimal control.
\end{theorem}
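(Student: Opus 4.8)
The plan is to apply the verification criterion of Proposition \ref{VT1} to the candidate process
\[
D_t := e^{\eta \bar X^N_t e^{rT}}\, Y_t, \qquad t\in[0,T].
\]
First I would check the terminal condition: since $X^N_T = e^{rT}\bar X^N_T$ and $Y_T = \xi = e^{-\eta X^N_T}$, one gets $D_T = 1$. The substance of the argument is then to show that, for every $u\in\mathcal U$, the process
\[
D_t\, e^{-\eta \bar X^u_t e^{rT}} = Y_t\,\Gamma^u_t, \qquad \Gamma^u_t := \exp\!\bigl(\eta e^{rT}(\bar X^N_t - \bar X^u_t)\bigr),
\]
is a $(\P,\bH)$-submartingale, and a $(\P,\bH)$-martingale for $u=u^*$. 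Once this is done, Proposition \ref{VT1} gives $D_t = V_t$ and the optimality of $u^*$, i.e. $V_t = e^{\eta \bar X^N_t e^{rT}} Y_t$; comparing with \eqref{eqn:VisWI} and dividing by the strictly positive factor $e^{\eta \bar X^N_t e^{rT}}$ then yields $Y_t = W^N_t$ for all $t\in[0,T]$.

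The heart of the proof is an application of the product rule to $Y_t\Gamma^u_t$. From \eqref{eqn:X_disc} one has $\eta e^{rT}(\bar X^N_t-\bar X^u_t) = \int_0^t \eta e^{r(T-s)}q^u_s\,ds - \int_0^t\int_0^{+\infty}\eta e^{r(T-s)}(z-\Phi(z,u_s))\,m^\a(ds,dz)$, so the It\^o formula for the exponential gives $d\Gamma^u_t = \Gamma^u_{t^-}\eta e^{r(T-t)}q^u_t\,dt + \int_0^{+\infty}\Gamma^u_{t^-}\bigl(e^{-\eta e^{r(T-t)}(z-\Phi(z,u_t))}-1\bigr)\,m^\a(dt,dz)$. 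Both $Y$ and $\Gamma^u$ jump only at the jump times of $N^\a$, so combining this with the BSDE for $(Y,\Theta^Y)$, compensating the resulting $m^\a(dt,dz)$-integrals by $\pi_{t^-}(\lambda)F^\a(dz)\,dt$, and noting that the $\Theta^Y$-terms outside the compensated martingale cancel, I would obtain that the absolutely continuous part of $d(Y_t\Gamma^u_t)$ equals $\Gamma^u_{t^-}\bigl(\esssup_{v\in\mathcal U}\widetilde f(t,Y_t,\Theta^Y_t(\cdot),v_t) - \widetilde f(t,Y_t,\Theta^Y_t(\cdot),u_t)\bigr)\,dt$, with $\widetilde f$ as in \eqref{ftilde} and using the expression \eqref{eq:BSDE_gen} of the generator. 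This drift is $\ge 0$ by the very definition of the $\esssup$, and it vanishes identically for $u=u^*$ by \eqref{eqn:u*esssup}; the remaining part of $d(Y_t\Gamma^u_t)$ is a stochastic integral against $\widetilde m^\a$. Hence $Y\Gamma^u$ is a local submartingale, and a local martingale for $u=u^*$.

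The delicate step --- and the one where the unboundedness of $\lambda$ really bites, in contrast with the bounded-intensity setting of Brachetta and Ceci \cite{Brachetta_Ceci_2020} --- is to upgrade these \emph{local} statements to genuine sub/martingale ones. I would localize along a sequence $\tau_k\uparrow T$ and pass to the limit using: the uniform bound $\Gamma^u_t \le \exp\!\bigl(\eta e^{rT}\int_0^T q^{u_M}_s\,ds\bigr)$, which holds because $\bar X^N_t-\bar X^u_t \le \int_0^t e^{-rs}q^u_s\,ds\le \int_0^T q^{u_M}_s\,ds$ and which has finite moments of all orders by Assumption \ref{ass_app_premium}(ii); the fact that $e^{-\eta e^{r(T-t)}(z-\Phi(z,u_t))}-1$ takes values in $(-1,0]$; the integrability $(Y,\Theta^Y)\in\mathcal L^2\times\widehat{\mathcal L}^2$ together with the $\mathcal S^2$-type estimate for $Y$ coming from the existence proof; and the moment bounds of Lemma \ref{lemma:expCfinito} and Proposition \ref{mom}, which control $\int_0^{+\infty}(z-\Phi(z,u_s))\pi_{s^-}(\lambda)F^\a(dz)$ and the compensator of the jump term. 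These provide an integrable dominating random variable, so that by dominated convergence $Y\Gamma^u$ is a true $(\P,\bH)$-submartingale for every $u\in\mathcal U$ and $Y\Gamma^{u^*}$ a true $(\P,\bH)$-martingale. Proposition \ref{VT1} then applies and concludes the proof as indicated above.
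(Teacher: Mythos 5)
Your proposal follows the same architecture as the paper's proof: define $D_t = e^{\eta\bar X^N_t e^{rT}}Y_t$, check $D_T=1$, apply the product rule to $Y_t\Gamma^u_t$, identify the drift as $\Gamma^u_{t^-}\bigl(\esssup_v\widetilde f - \widetilde f(\cdot,u_t)\bigr)\ge0$ (vanishing for $u^*$), and invoke Proposition~\ref{VT1}; that algebraic backbone is identical to the one in the text. Where you diverge is in how you upgrade the local (sub)martingale statement to a genuine one. The paper does this \emph{directly}, by writing the remainder as a compensated jump integral $M^u_t=\int_0^t\int_0^{+\infty}(\cdots)\,\widetilde m^{(1)}(ds,dz)$ and verifying the two $L^1$-compensator conditions of Remark~\ref{rem:Hmg}: it bounds
\[
\mathbb E\!\left[\int_0^T\!\!\int_0^{+\infty}\Gamma^u_t\,|\Theta^Y_t(z)|\,e^{-\eta e^{r(T-t)}(z-\Phi(z,u_t))}\,\pi_t(\lambda)F^{(1)}(dz)\,dt\right]
\quad\text{and}\quad
\mathbb E\!\left[\int_0^T\!\!\int_0^{+\infty}\Gamma^u_t\,|Y_t|\,\bigl|e^{-\eta e^{r(T-t)}(z-\Phi(z,u_t))}-1\bigr|\,\pi_t(\lambda)F^{(1)}(dz)\,dt\right]
\]
using the elementary inequality $2ab\le a^2+b^2$, Jensen on the filter, Proposition~\ref{mom} and Assumption~\ref{ass_app_premium}(ii). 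This needs precisely $(Y,\Theta^Y)\in\mathcal L^2\times\widehat{\mathcal L}^2$, i.e. exactly the theorem's hypothesis, and no localization. You instead propose to localize along $\tau_k\uparrow T$ and pass to the limit by dominated convergence, dominating $Y_{t\wedge\tau_k}\Gamma^u_{t\wedge\tau_k}$ by $\sup_t|Y_t|\cdot\sup_t\Gamma^u_t$. That route works, but it forces you to invoke an $\mathcal S^2$-bound for $Y$ (as you acknowledge, importing it ``from the existence proof''), which is strictly stronger than the $\mathcal L^2$ stated in Theorem~\ref{VT} and must therefore be justified separately --- whereas the paper's direct check avoids this dependence entirely. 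In short: same proof up to the last step, and there your DCT argument is sound but slightly less self-contained than the paper's compensator-integrability check.
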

\begin{proof}
Let $(Y,\Theta^{Y}) \in \mathcal{L}^2 \times \widehat{\mathcal{L}}^2$ be a solution to the BSDE \eqref{bsde} and  $u^*\in {\mathcal U}$ be the process satisfying Equation \eqref{eqn:u*esssup} (see $ii)$ in Remark \ref{RC}). Define $D_t :=  e^{\eta \bar{X}^N_t e^{rT}} \ Y_t$, $t \in [0,T]$,
and observe that $D_T = e^{\eta X^N_T} \xi =1.$ We now prove that  $D=\{ D_t e^{ - \eta \bar{X}^u_t e^{rT}} , t \in [0, T] \}$ is a $(\P,\bH)$-sub-martingale for any $u \in \mathcal{U}$ and a $(\P,\bH)$-martingale for $u^*$. Then the statement will follow by Proposition \ref{VT1}.

By the product rule, for any $u \in \mathcal{U}$
\begin{align*}
\ud (D_t \ e^{ - \eta \bar{X}^u_t e^{rT}}) & =  \ud  ( e^{  \eta (\bar{X}^N_t - \bar{X}^u_t)e^{rT}} \ Y_t) \\
& =  e^{\eta (\bar{X}^N_{t^-}-\bar{X}^u_{t^-}) e^{rT}}\ \ud Y_t + Y_{t-} \ \ud ( e^{  \eta (\bar{X}^N_t - \bar{X}^u_t)e^{rT}} ) + \ud \Big ( \sum_{s \leq t} \Delta Y_s \ \Delta \big(e^{  \eta (\bar{X}^N_s - \bar{X}^u_s)e^{rT} }\big) \Big ).
\end{align*}
Recalling Equation \eqref{eqn:X_disc}, we notice that
\begin{equation}\label{C1}
\bar{X}^N_{t}-\bar{X}^u_{t} = \int_0^t e^{-rs}q^u_s\,ds
-\int_0^t\int_0^{+\infty} e^{-rs} (z-\Phi(z, u_s)) \,m^\a(ds,dz),
\end{equation}
and applying It\^o formula we obtain
\begin{align*} \ud ( e^{  \eta (\bar{X}^N_t - \bar{X}^u_t)e^{rT}} ) =  &
\ \eta e^{r T}  e^{  \eta (\bar{X}^N_t - \bar{X}^u_t)e^{rT}} e^{-rt} \ q^u_t \ \ud t \\
&+ e^{  \eta (\bar{X}^N_{t-} - \bar{X}^u_{t-})e^{rT}}  \int_0^{+\infty} \big(e^{-\eta e^{r(T-t)}(z-\Phi(z,u_t))}-1\big) m^\a(\ud t, \ud z).
\end{align*}
Finally, after some calculations we get, for any $u \in \mathcal{U}$
\begin{multline*}
\ud (D_t e^{ - \eta \bar{X}^u_t e^{rT}}) = \ud M^u_t +  e^{ \eta (\bar{X}^N_{t} - \bar{X}^u_{t})e^{rT}}
\biggl( \esssup_{w \in \mathcal{U}}  \widetilde{f} ( t, W^{Y}_t , \Theta^{W^Y}_t(\cdot ) , w_t ) -  \widetilde{f} ( t, W^{Y}_t ,\Theta^{W^Y}_t(\cdot ) , u_t ) \biggr),
\end{multline*}
where
\begin{multline*}
M^u_t = \int_0^t \int_0^{+\infty} e^{\eta (\bar{X}^N_{s^-}-\bar{X}^u_{s^-}) e^{rT}} \ \Theta^{W^Y}_s (z) \ e^{-\eta e^{r(T-s)}(z-\Phi(z,u_s))} \ \widetilde m^\a(\ud s, \ud z) \\
+\int_0^t \int_0^{+\infty} Y_{s-} \ e^{\eta (\bar{X}^N_{s^-}-\bar{X}^u_{s^-}) e^{rT}} \biggl(e^{-\eta e^{r(T-s)}(z-\Phi(z,u_s))}-1\biggr) \ \widetilde m^\a(\ud s, \ud z) , \quad t\in[0,T].
\end{multline*}
It remains to verify that, for any $u\in\mathcal{U}$, the process $\{M^u_t, t\in[0,T]\}$,
is a $(\P,\mathbb{H})$-martingale. To this end, it is sufficient to prove that the following two conditions hold
\begin{gather*}
\mathbb{E} \left[ \int_0^T \int_0^{+\infty}  e^{\eta (\bar{X}^N_{t}-\bar{X}^u_{t}) e^{rT}} \ \big|\Theta^{Y}_t (z) \big| e^{-\eta e^{r(T-t)}(z-\Phi(z,u_t))} \pi_{t} (\lambda) F^\a(\ud z) \ud t \right] < +\infty, \\
\mathbb{E} \left[ \int_0^T \int_0^{+\infty}  \ e^{\eta (\bar{X}^N_{t}-\bar{X}^u_{t}) e^{rT}} \ |Y_t| \big| e^{-\eta e^{r(T-t)}(z-\Phi(z,u_t))}-1 \big| \pi_{t} (\lambda) F^\a(\ud z) \ud t \right] < +\infty.
\end{gather*}

Using Equation \eqref{C1}, $\Phi(z,u_t) \leq z$, the well known inequality $2ab \leq a^2 + b^2$ $\forall a,b \in \R$ and Jensen's inequality, the first expectation above is dominated by
\[
\begin{split}
&\mathbb{E} \left[ e^{  \eta e^{rT} \int_0^T e^{-rt} q^{u_M}_t\,dt  } \int_0^T  \int_0^{+\infty} \bigl| \Theta^{Y}_t (z) \bigr| \pi_{t^-} (\lambda) F^\a(\ud z) \ud t \right]  \\
&\leq {1\over 2} \left \{ \mathbb{E} \left[ e^{  2\eta e^{rT} \int_0^T e^{-rt}q^{u_M}_t\,dt  } \int_0^T \pi_{t} (\lambda) \ud t \right] + \mathbb{E} \left[ \int_0^T \int_0^{+\infty} \bigl| \Theta^{Y}_t (z) \bigr|^2  \pi_{t} (\lambda) F^\a(\ud z) \ud t \right] \right \} \\
&\leq {1\over 4} \mathbb{E} \left[ e^{  4\eta e^{rT} \int_0^T e^{-rt}q^{u_M}_t\,dt  } \right] T
	+ {1\over 4} \mathbb{E} \left[ \int_0^T \pi^2_{t} (\lambda) \ud t\right]
	+ {1\over 2} \mathbb{E} \left[ \int_0^T \int_0^{+\infty} \bigl| \Theta^{Y}_t (z) \bigr|^2  \pi_{t} (\lambda) F^\a(\ud z) \ud t \right] \\
&\leq {1\over 4} \mathbb{E} \left[ e^{  4\eta e^{rT} \int_0^T e^{-rt}q^{u_M}_t\,dt  } \right] T
	+ {1\over 4} \mathbb{E} \left[ \int_0^T \pi_{t} (\lambda^2) \ud t\right]
	+ {1\over 2} \mathbb{E} \left[ \int_0^T \int_0^{+\infty} \bigl| \Theta^{Y}_t (z) \bigr|^2  \pi_{t} (\lambda) F^\a(\ud z) \ud t \right] \\
	&<+\infty,
\end{split}
\]
which is finite because of Assumption \ref{ass_app_premium} ii), Remark \ref{nuovo}, Proposition \ref{mom} and recalling that $\Theta^{Y} \in \widehat{\mathcal{L}}^2$. The second expectation is lower than
\[
\begin{split}
	&\mathbb{E} \left[ e^{  \eta e^{rT} \int_0^T e^{-rt}q^{u_M}_t\,dt  }
	\int_0^T |Y_{t}| \ \pi_{t} (\lambda) \ud t \right] \\
	&\le {1\over 2} \mathbb{E} \left[ \int_0^T \abs{Y_{t}}^2  \ud t \right]
	+ {1\over 4} \mathbb{E} \left[ e^{  4\eta e^{rT} \int_0^T e^{-rt}q^{u_M}_t\,dt  } \right] T
	+ {1\over 4} \mathbb{E} \left[ \int_0^T  \pi_{t}^4 (\lambda) \ud t \right] <+\infty ,
\end{split}
\]
where the first term is finite because $Y \in \mathcal{L}^2$, the second is finite by Assumption  \ref{ass_app_premium} $ii)$ and the third follows by Remark \ref{nuovo} and Proposition \ref{mom}.
\end{proof}


\section{The optimal reinsurance strategy}\label{Optimal_Reinsurance}

The aim of this section is to provide more insight into the structure of the optimal reinsurance strategy and investigate some special cases.\\
By Theorem \ref {T1}, $(W^N,\Theta^{W^N}) \in \mathcal{L}^2 \times \widehat{\mathcal{L}}^2$ is the unique solution to the BSDE \eqref{bsde} and any maximizer in Equation \eqref{eqn:u*esssup} provides an optimal control. Hence, exploiting the expression in Equation \eqref{eqn:q_of_u}, we look over $u \in \mathcal U$ for the maximizer of the function $\widetilde{f}\colon\mathbb{M}^u\to\mathbb{R}$ given by
\begin{multline}\label{eqn:ftilde_extended}
\widetilde{f} ( t, \omega, w, \theta  ( \cdot ), u ) =
	- w\eta e^{r(T-t)} q(t,\omega,u) \\
	- \int_0^{+\infty} (w+\theta(z)) (e^{-\eta e^{r(T-t)}(z-\Phi(z,u))} -1)
	\pi _{t_{-} } (\lambda )(\omega) F^{(1)} (dz) .
\end{multline}

The following general result provides a characterization of the optimal reinsurance strategy in the one-dimensional case, where $\Phi(z,u)$ is increasing in $u$, $u \in [u_M, u_N] \subset \overline{ \mathbb R}$.
In order to obtain some definite results we need to introduce a concavity hypothesis for the function $\widetilde{f}$ w.r.t. to the variable $u \in [u_M,u_N]$.

\begin{proposition}\label{prop:optreins_general}
Under Assumption \ref{ass_app_premium}, 
suppose that $\Phi (z, u)$ is differentiable in $u\in [u_M,u_N]$ for almost every $z \in (0, + \infty)$ and $\widetilde{f}$ given in Equation \eqref{eqn:ftilde_extended} is strictly concave in $u\in [u_M,u_N]$. Then the optimal reinsurance strategy is $u^*_t=\{ \hat{u}(t,W^N_{t^-},\Theta^{W^N}_t(\cdot)), t\in[0,T] \}$, where $\hat{u}$ is:
\begin{equation}\label{u*_general}
\hat{u}(t,\omega,w,\theta(\cdot)) =
\begin{cases}
u_M 	& (t,\omega,w,\theta(\cdot))\in R_0 \\
\bar{u}(t,\omega,w,\theta(\cdot))	& \mathbb{M}\backslash (R_0 \cup R_1) \\
u_N				& (t,\omega,w,\theta(\cdot))\in R_1,
\end{cases}
\end{equation}
and we define the two regions
\begin{align*}
R_0 &= \left\{ (t,\omega,w,\theta(\cdot))\in\mathbb{M} : \frac{\partial \widetilde{f} ( t, \omega, w, \theta  ( \cdot ), u_M )}{\partial u} <0 \right\} \\
 R_1 &=  \left\{ (t,\omega,w,\theta(\cdot))\in\mathbb{M} : \frac{\partial \widetilde{f} ( t, \omega, w, \theta  ( \cdot ), u_N )}{\partial u} >0 \right\},
\end{align*}
and $\bar{u}(t,\omega,w,\theta(\cdot)) \in (u_M,u_N)$ solves the following equation:
\begin{equation}\label{1st_order_gen}
- w \frac{\partial q(t,\omega,u)}{\partial u} = \int_{0}^{\infty} [w + \theta(z) ] z e^{-\eta e^{r (T-t)(z- \Phi (z, u))} } \frac{\partial \Phi(z,u)}{\partial u}  \pi _{t_{-} } (\lambda )(\omega) F^{(1)} (dz).
\end{equation}
\end{proposition}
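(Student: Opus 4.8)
The plan is to reduce the identification of the optimal strategy to a pointwise (in $(t,\omega)$), one-dimensional concave maximisation over $u\in[u_M,u_N]$, to solve that maximisation by elementary calculus, and to check that the resulting selection is an admissible $\bH$-predictable process. First I would invoke Theorem~\ref{T1}: any $u^*\in\mathcal U$ satisfying \eqref{eqn:u*esssup} is optimal, and by item~ii) of Remark~\ref{RC} such an admissible maximiser exists. Since, for fixed $(t,\omega)$, the expression $\widetilde f\big(t,\omega,W^N_{t^-}(\omega),\Theta^{W^N}_t(\cdot)(\omega),u\big)$ depends on the control only through its value $u=u_t(\omega)$, which ranges over all of $U=[u_M,u_N]$ as $u$ ranges over $\mathcal U$, it suffices to: (i) identify, for each fixed $(t,\omega,w,\theta(\cdot))\in\mathbb M$, the maximiser $\hat u(t,\omega,w,\theta(\cdot))$ of the scalar map $g(u):=\widetilde f(t,\omega,w,\theta(\cdot),u)$ over $[u_M,u_N]$; and (ii) show that $u^*_t:=\hat u(t,W^N_{t^-},\Theta^{W^N}_t(\cdot))$ defines an $\bH$-predictable process. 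Indeed, such a $u^*$ then attains the essential supremum in \eqref{eqn:u*esssup} pointwise and, being $\bH$-predictable, is admissible by Proposition~\ref{prop:admissibleproc}, hence optimal by Theorem~\ref{T1}.

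Next I would study the regularity of $g$. All the integrals in \eqref{eqn:ftilde_extended} are finite (this was already used in Theorem~\ref{T1}), so $g$ is real-valued on $[u_M,u_N]$. Using that $q(t,\omega,\cdot)$ and $\Phi(z,\cdot)$ are differentiable on $[u_M,u_N]$ (the latter for a.e.\ $z$), the bound $0\le\Phi(z,u)\le z$, and the exponential-moment conditions of Assumption~\ref{ass_app_premium}~i) --- which, combined with $\bE\big[\int_0^T\lambda_s\,\ud s\big]<+\infty$ (Proposition~\ref{mom}, Remark~\ref{nuovo}), furnish an integrable dominating function for the $u$-difference quotients of the integrand, uniformly on the interval --- I would differentiate under the integral sign to obtain that $g\in C^1([u_M,u_N])$, together with an explicit formula for $g'$. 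Dividing the stationarity relation $g'(u)=0$ by the positive factor $\eta e^{r(T-t)}$ then produces precisely the first-order condition \eqref{1st_order_gen}. By hypothesis $g$ is strictly concave on $[u_M,u_N]$, so $g'$ is continuous and strictly decreasing there. I expect this differentiation-under-the-integral step, and securing the uniform integrable bound behind it, to be the only genuinely technical part of the argument; everything else is elementary.

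I would then solve the scalar problem using the monotonicity of $g'$. If $g'(u_M)<0$, then $g'<0$ on all of $[u_M,u_N]$, so $g$ is strictly decreasing and its unique maximiser is $u_M$; this is exactly the region $R_0$. If $g'(u_N)>0$, then $g'>0$ throughout, $g$ is strictly increasing and its unique maximiser is $u_N$; this is $R_1$. These two regions are disjoint, since $g'(u_M)<0$ forces $g'(u_N)\le g'(u_M)<0$. On the complement $\mathbb M\setminus(R_0\cup R_1)$ one has $g'(u_M)\ge 0\ge g'(u_N)$, so by the intermediate value theorem together with the strict monotonicity of $g'$ there is a unique $\bar u=\bar u(t,\omega,w,\theta(\cdot))$ with $g'(\bar u)=0$ --- that is, solving \eqref{1st_order_gen} --- and this $\bar u$ is the unique maximiser of $g$, lying in the open interval $(u_M,u_N)$ whenever both endpoint inequalities are strict. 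Collecting the three cases, for every $(t,\omega,w,\theta(\cdot))\in\mathbb M$ the point $\hat u(t,\omega,w,\theta(\cdot))$ defined in \eqref{u*_general} is the unique maximiser of $g$ on $[u_M,u_N]$.

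Finally I would address measurability and conclude. The sets $R_0$ and $R_1$ are measurable, and on $\mathbb M\setminus(R_0\cup R_1)$ the selection $\bar u$ is the unique zero of the map $u\mapsto g'(u)$, which is continuous and strictly decreasing in $u$ and measurable in the parameters; hence $\hat u$ is jointly measurable by a standard measurable-selection (implicit-function) argument. Substituting the $\bH$-predictable processes $w=W^N_{t^-}$ and $\theta(\cdot)=\Theta^{W^N}_t(\cdot)$ therefore yields an $\bH$-predictable process $u^*_t=\hat u(t,W^N_{t^-},\Theta^{W^N}_t(\cdot))$, which is admissible by Proposition~\ref{prop:admissibleproc}. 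By the reduction in the first paragraph this $u^*$ attains the essential supremum in \eqref{eqn:u*esssup}, and so by Theorem~\ref{T1} it is an optimal control, which is the claim.
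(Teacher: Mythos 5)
Your proposal is correct and follows essentially the same route as the paper: strict concavity in $u$ reduces the problem to a pointwise scalar maximisation solved via the first-order condition, and a measurable-selection argument yields the $\bH$-predictable optimal control, admissible by Proposition~\ref{prop:admissibleproc} and optimal by Theorem~\ref{T1}. The only cosmetic difference is that the paper extends $\widetilde f$ to all of $\mathbb R$ and projects the unconstrained stationary point $\bar u$ onto $[u_M,u_N]$ via $\max\{u_M,\min\{\bar u,u_N\}\}$, whereas you analyse the sign of $\partial\widetilde f/\partial u$ at the two endpoints directly, which is equivalent but avoids tacitly assuming $q(t,\omega,\cdot)$ and $\Phi(z,\cdot)$ extend beyond $U$.
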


\begin{proof}
We observe that $\widetilde{f}$ given in Equation \eqref{eqn:ftilde_extended}  is continuous and strictly concave in $u\in[u_M,u_N]$ by hypothesis. Hence the first order condition, which reads as Equation \eqref{1st_order_gen}, admits a unique solution $\bar{u}(t,\omega,w,\theta(\cdot))$ measurable  function on $\mathbb{M}$.
If we extend the function $\widetilde{f}$ to the whole real line, i.e. $u \in \R$, it is decreasing for $u<\bar{u}$ and increasing for $u>\bar{u}$, hence the maximizer on $[u_M,u_N]$ must be given by
\[
\hat{u}(t,\omega,w,\theta(\cdot)) = \max\{ u_M, \min\{\bar{u}(t,\omega,w,\theta(\cdot)), u_N \} \},
\]
which is equivalent to the Equation \eqref{u*_general}.
\end{proof}

\begin{remark}
If $q(t,\omega,u)$ and $\Phi (z, u)$ are linear or convex on $u \in  [u_M,u_N]$ then $\widetilde{f}$ is strictly concave in $u\in[u_M,u_N]$
and Proposition \ref{prop:optreins_general} applies.
\end{remark}

We now consider a few examples under the expected value principle for the reinsurance premium (see Remark \ref{ex_EVP}).

\subsection{Proportional Reinsurance}

In this subsection $\Phi(z,u) =z u$, $u \in [0,1]$.
According to Equation \eqref{ex_EVP1}, the reinsurance premium reads as:
\begin{equation}\label{eqn:evp}
q_t^u = (1+ \theta _R ) \mathbb{E} [Z^{(1)}] \pi _{t^- } (\lambda) (1-u_t), \quad \forall u \in \mathcal U.
\end{equation}

Notice that Assumption \ref{ass_app_premium} $ii)$ is automatically satisfied, since for every $a >0$ (see Appendix \ref{app:useful_res})
\[
	\mathbb E \left[ e^{a \int_0^T \pi_t(\lambda) \,dt} \right] < +\infty.
\]

\begin{proposition}
Under Assumption \ref{ass_app_premium} $i)$, there exist two stochastic thresholds $\theta^F_t < \theta^N_t$ such that 
\begin{equation}
\label{ustar}
u^*_t (\omega) =
\begin{cases}
	0  & \text{if } \theta_R < \theta^F_t(\omega)
	\\
	1 & \text{if } \theta_R > \theta^N_t(\omega)
	\\
	\bar{u}(t,\omega,W^N_{t^-}(\omega) ,\Theta^{W^N}_t(\cdot)(\omega))	& \text{otherwise,}
\end{cases}
\end{equation}
where
\begin{align*}
\theta^F_t &= {1 \over \mathbb{E} [Z^{(1)}] }
	\int_0^{\infty} \frac{W^N_{t^-}+ \Theta^{W^N}_t(z)}{W^N_{t^-}} z e^{-\eta e^{r (T-t) z} } F^{(1)} (dz) -1, \\
\theta^N_t &= {1 \over \mathbb{E} [Z^{(1)}] }
	\int_0^{\infty} \frac{W^N_{t^-} + \Theta^{W^N}_t(z)}{W^N_{t^-}} z  F^{(1)} (dz) -1
\end{align*}
and where $\bar{u}(t, w, \theta(\cdot)) \in (0,1)$ solves the following equation:
\begin{equation}
\label{eqn:evp_prop_stationary}
(1+ \theta _R ) \mathbb{E} [Z^{(1)}] = \int_0^{+\infty}  \frac{w+\theta(z)}{w} z e^{-\eta e^{r (T-t) z (1-u)} } F^{(1)} (dz).
\end{equation}
\end{proposition}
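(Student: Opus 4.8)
The plan is to obtain this proposition as a direct specialisation of Proposition~\ref{prop:optreins_general} to the proportional retention $\Phi(z,u)=zu$ with $[u_M,u_N]=[0,1]$ and to the expected-value premium \eqref{eqn:evp}. First I would substitute these into \eqref{eqn:ftilde_extended}, obtaining
\[
\widetilde f(t,\omega,w,\theta(\cdot),u) = -\,w\,\eta e^{r(T-t)}(1+\theta_R)\,\mathbb E[Z^{(1)}]\,\pi_{t^-}(\lambda)(1-u) - \int_0^{+\infty}(w+\theta(z))\big(e^{-\eta e^{r(T-t)}z(1-u)}-1\big)\pi_{t^-}(\lambda)F^{(1)}(\ud z),
\]
and then verify the two hypotheses of Proposition~\ref{prop:optreins_general}. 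Differentiability of $\Phi$ in $u$ is immediate, with $\partial_u\Phi(z,u)=z$. For strict concavity of $\widetilde f$ in $u\in[0,1]$ I would observe that $q^u_t$ is affine in $u$ and that, for each fixed $z$, the map $u\mapsto e^{-\eta e^{r(T-t)}z(1-u)}$ is convex, so that once the coefficients $w+\theta(z)=W^N_{t^-}+\Theta^{W^N}_t(z)$ are known to be nonnegative the function $\widetilde f$ is concave, the strictness coming from
\[
\partial^2_u\widetilde f(t,\omega,w,\theta(\cdot),u) = -\,\eta^2 e^{2r(T-t)}\pi_{t^-}(\lambda)\int_0^{+\infty}(w+\theta(z))\,z^2\,e^{-\eta e^{r(T-t)}z(1-u)}F^{(1)}(\ud z)<0
\]
(using $\pi_{t^-}(\lambda)>0$ and that $F^{(1)}$ is carried by $(0,+\infty)$); this is precisely the regime of the Remark following Proposition~\ref{prop:optreins_general}. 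I would also record at the start that Assumption~\ref{ass_app_premium}~$ii)$ is automatic here because $\mathbb E[e^{a\int_0^T\pi_t(\lambda)\,\ud t}]<+\infty$ for every $a>0$, so that $(W^N,\Theta^{W^N})$ is indeed the solution of \eqref{bsde} by Theorem~\ref{T1}.

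The main obstacle, and the only genuinely non-routine point, is to justify that $W^N_{t^-}+\Theta^{W^N}_t(z)>0$ for $\pi_{t^-}(\lambda)F^{(1)}(\ud z)\otimes\ud t\otimes\ud\mathbb P$-almost every $(t,z,\omega)$, which underlies both the concavity argument and the division by $W^N_{t^-}$. I would argue as follows: from the BSDE \eqref{bsde} the drift of $W^N$ is absolutely continuous, hence $W^N$ jumps only at the times $T^\a_n$, with $\Delta W^N_{T^\a_n}=\Theta^{W^N}_{T^\a_n}(Z^\a_n)$; since $W^N_{T^\a_n}\ge M^\a_{T^\a_n}>0$ by Proposition~\ref{stima}, we have $W^N_{T^\a_n-}+\Theta^{W^N}_{T^\a_n}(Z^\a_n)>0$ along every realised mark, and therefore
\[
0 = \mathbb E\Big[\sum_{n:\,T^\a_n\le T}\mathbbm{1}_{\{W^N_{T^\a_n-}+\Theta^{W^N}_{T^\a_n}(Z^\a_n)\le0\}}\Big] = \mathbb E\Big[\int_0^T\!\!\int_0^{+\infty}\mathbbm{1}_{\{W^N_{s^-}+\Theta^{W^N}_s(z)\le0\}}\pi_{s^-}(\lambda)F^{(1)}(\ud z)\,\ud s\Big]
\]
by Remark~\ref{rem:Hmg}; since $\pi_{s^-}(\lambda)>0$ this forces the claimed a.e.\ positivity. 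The pointwise positivity $W^N_{t^-}>0$ needed for the normalisation is immediate from $W^N\ge M^\a>0$ (Proposition~\ref{stima}).

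Once the hypotheses are in place, Proposition~\ref{prop:optreins_general} gives $u^*_t=\hat u(t,W^N_{t^-},\Theta^{W^N}_t(\cdot))$ with the three-regime structure, and it only remains to identify the regions and the stationarity equation. Differentiating $\widetilde f$ in $u$ and factoring out $\eta e^{r(T-t)}\pi_{t^-}(\lambda)>0$ yields
\[
\partial_u\widetilde f(t,\omega,w,\theta(\cdot),u) = \eta e^{r(T-t)}\pi_{t^-}(\lambda)\Big[w(1+\theta_R)\mathbb E[Z^{(1)}] - \int_0^{+\infty}(w+\theta(z))\,z\,e^{-\eta e^{r(T-t)}z(1-u)}F^{(1)}(\ud z)\Big].
\]
Evaluating at $u=u_M=0$ (where the exponential equals $e^{-\eta e^{r(T-t)}z}$) and at $u=u_N=1$ (where it becomes $1$), dividing by $w=W^N_{t^-}>0$ and rearranging, the sign conditions defining $R_0$ and $R_1$ in Proposition~\ref{prop:optreins_general} turn into $\theta_R<\theta^F_t$ and $\theta_R>\theta^N_t$ with exactly the $\theta^F_t,\theta^N_t$ in the statement, while the first-order condition \eqref{1st_order_gen} on the intermediate region becomes \eqref{eqn:evp_prop_stationary}. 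Finally I would check $\theta^F_t<\theta^N_t$: since $e^{-\eta e^{r(T-t)}z}<1$ for every $z>0$ and $\tfrac{W^N_{t^-}+\Theta^{W^N}_t(z)}{W^N_{t^-}}\,z\ge0$ with strict positivity on a set of positive $F^{(1)}$-measure (as $\mathbb E[Z^{(1)}]>0$ and $F^{(1)}$ is carried by $(0,+\infty)$), the integral defining $\theta^F_t$ is strictly smaller than that defining $\theta^N_t$, which completes the argument.
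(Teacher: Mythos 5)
Your proof is correct and follows the same route as the paper, which disposes of this proposition with the single line that it "follows immediately from Proposition \ref{prop:optreins_general}"; your substitution into $\widetilde f$, differentiation, and identification of the thresholds $\theta^F_t,\theta^N_t$ and the stationarity equation all match what that reduction entails. You do go beyond the paper in one genuinely useful respect: the remark after Proposition \ref{prop:optreins_general} (and hence the paper's one-line proof) tacitly assumes that the coefficients $W^N_{t^-}+\Theta^{W^N}_t(z)$ are nonnegative when asserting concavity of $\widetilde f$ from convexity of $q$ and $\Phi$, and your argument — reading off $\Delta W^N_{T^{(1)}_n}=\Theta^{W^N}_{T^{(1)}_n}(Z^{(1)}_n)$ from the BSDE, using $W^N\ge M^{(1)}>0$ from Proposition \ref{stima}, and transferring the realised-jump positivity to $\pi_{t^-}(\lambda)F^{(1)}(dz)\otimes dt\otimes d\mathbb P$-a.e.\ positivity via Remark \ref{rem:Hmg} — is a correct and clean way to close that gap.
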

\begin{proof}
This follows immediately from Proposition \ref{prop:optreins_general}.
\end{proof}

Let us briefly comment the previous result. We can distinguish three cases, depending on the stochastic conditions (in particular, depending on the solution of the BSDE \eqref{bsde}):
\begin{itemize}
\item  if the reinsurer's safety loading $\theta_R$ is smaller than $\theta^F_t$, then  full reinsurance is optimal;
\item if  $\theta_R$ is larger than $\theta^N_t$, then null reinsurance is optimal and the contract is not subscribed;
\item lastly, if $\theta^F_t<\theta_R<\theta^N_t$, then the optimal retention level  takes values in $(0,1)$, that is, the ceding company transfers to the reinsurance a non null percentage of risk (not the full risk).
\end{itemize}

In other words, if the reinsurance contract is inexpensive, the full reinsurance is purchased. On the contrary, when the reinsurance cost is excessive, the primary insurer will retain all the risk. In the intermediate case $\theta^F_t<\theta_R<\theta^N_t$ the retention level takes values in the interval $(0,1)$. In any case, the concepts of inexpensive and expensive must be related to the underlying risk through the stochastic processes $W^N$ and $\Theta^{W^N}$, hence the thresholds are stochastic.

\subsection{Limited Stop-Loss Reinsurance}

The reinsurer's loss function is (see Example \ref{ex_reinsurance}$(3)$):
\begin{equation}
z - \Phi (z,u) =  z- \Phi(z,(u_1,u_2)) = (z- u_1 )^{+} - (z- u_2 )^{+} =
\left\{
\begin{array}{lcl}
0 & \textrm{if} & z \le u_1 \\
z - u_1 & \textrm{if} & z \in (u_1,u_2) \\
u_2- u_1 & \textrm{if} & z \ge u_2 , \\
\end{array}
\right.
\end{equation}
 with $u_1 < u_2$, so that the retention function is $\Phi (z,u) = z-  (z- u_1 )^{+} + (z- u_2 )^{+}$.

To obtain explicit results we will reduce our analysis to the case where the control is $u=u_1$, while $u_2=u_1+\beta$ is unequivocally determined, $\beta>0$ being the fixed maximum reinsurance coverage.

According to Equation \ref{ex_EVP1}, the expected value principle becomes
\begin{equation}\label{eq:q_stop_loss}
q_t^u =  (1+\theta_R) \pi_{t^-} (\lambda)  \int_{u_t}^{u_t+\beta} S_Z(z) dz , \quad \forall u \in \mathcal U,
\end{equation}
where $S_Z$ is the survival function $S_Z(z) = 1- F^{(1)} (z)$.

Let us observe that Assumption \ref{ass_app_premium} $ii)$ is automatically satisfied, in virtue of Lemma \ref{lem:finite}.

\begin{proposition}
\label{prop:u*LSL}
Under Assumption \ref{ass_app_premium} $i)$, there exists a stochastic threshold $\theta^L_t$ such that
\begin{equation}
\label{ustar_LSL}
u^*_t (\omega) =
\begin{cases}
	0  & \text{if } \theta_R < \theta^L_t(\omega)
	\\
	\bar{u}(t,\omega,W^N_{t^-}(\omega) ,\Theta^{W^N}_t(\cdot)(\omega))	& \text{otherwise,}
\end{cases}
\end{equation}
where
\[
\theta^L_t = {1 \over F^\a(\beta) }
	\int_0^{\beta} \frac{W^N_{t^-}+ \Theta^{W^N}_t(z)}{W^N_{t^-}} e^{-\eta e^{r (T-t) z} } F^{(1)} (dz) -1.
\]
and $\bar{u}(t, w, \theta(\cdot)) \in (0, +\infty)$ solves the following equation:
\begin{equation}
\label{eqn:evp_prop_stationaryLSL}
(1+ \theta _R ) \bigl(F^\a(u+\beta)-F^\a(u)\bigr) =
	\int_u^{u+\beta}  \frac{w+\theta(z)}{w} e^{-\eta e^{r (T-t) (z-u)} } F^{(1)} (dz).
\end{equation}
\end{proposition}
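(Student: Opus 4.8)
The plan is to derive Proposition \ref{prop:u*LSL} by specializing the general characterization, Proposition \ref{prop:optreins_general}, to the limited stop-loss contract with fixed coverage $\beta$. First I would record the data of the problem: the control is the scalar $u=u_1$ ranging over $[u_M,u_N]=[0,+\infty]$; the retention function $\Phi(z,u)=z-(z-u)^+ +(z-u-\beta)^+$ is non-decreasing in $u$, with $\Phi(z,0)=(z-\beta)^+$ (maximal coverage) and $\Phi(z,+\infty)=z$ (null reinsurance), so that $z-\Phi(z,u)=(z-u)^+-(z-u-\beta)^+$ equals $z-u$ on $\{u<z\le u+\beta\}$, $\beta$ on $\{z>u+\beta\}$ and $0$ on $\{z\le u\}$, and $\partial_u\Phi(z,u)=\mathbbm{1}_{\{u<z\le u+\beta\}}$ for a.e.\ $z$; the premium is \eqref{eq:q_stop_loss}, whence $\partial_u q^u_t=-(1+\theta_R)\pi_{t^-}(\lambda)\bigl(F^\a(u+\beta)-F^\a(u)\bigr)$. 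Since Assumption \ref{ass_app_premium} ii) holds here (Lemma \ref{lem:finite}), the full Assumption \ref{ass_app_premium} is in force; hence Theorem \ref{T1} applies, $(W^N,\Theta^{W^N})$ solves the BSDE \eqref{bsde}, and any $\mathbb H$-predictable maximizer of $u\mapsto\widetilde f(t,\omega,W^N_{t^-},\Theta^{W^N}_t(\cdot),u)$ over $[0,+\infty]$ is an optimal control. It thus remains to check the hypotheses of Proposition \ref{prop:optreins_general} and to identify the quantities $\bar u$, $R_0$, $R_1$ it produces.

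Plugging these expressions into the first-order condition \eqref{1st_order_gen} is the next step. On $\{u<z\le u+\beta\}$ one has $\partial_u\Phi(z,u)=1$ and $z-\Phi(z,u)=z-u$, so the right-hand side of \eqref{1st_order_gen} collapses to an integral over $(u,u+\beta]$; dividing through by $w\,\pi_{t^-}(\lambda)$, which is strictly positive since $W^N_{t^-}>0$ by Proposition \ref{stima}, turns \eqref{1st_order_gen} into precisely Equation \eqref{eqn:evp_prop_stationaryLSL}. Under the strict concavity hypothesis of Proposition \ref{prop:optreins_general} this equation has a unique solution $\bar u(t,\omega,w,\theta(\cdot))\in(0,+\infty)$, measurable on $\mathbb M$.

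I would then work out the two boundary regions. Evaluating $\partial_u\widetilde f$ at $u=u_M=0$, where $\partial_u\Phi(z,0)=\mathbbm{1}_{\{0<z\le\beta\}}$ and $z-\Phi(z,0)=z$ on $(0,\beta]$, and dividing by the positive factor $\eta e^{r(T-t)}\,w\,\pi_{t^-}(\lambda)$, one checks that the defining inequality of $R_0$, namely $\partial_u\widetilde f(t,\omega,w,\theta(\cdot),0)<0$, is equivalent to $\theta_R<\frac{1}{F^\a(\beta)}\int_0^\beta\frac{w+\theta(z)}{w}e^{-\eta e^{r(T-t)}z}F^\a(\ud z)-1$, i.e.\ to $\theta_R<\theta^L_t$ with $w=W^N_{t^-}$, $\theta=\Theta^{W^N}_t$. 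At $u=u_N=+\infty$ instead, both $F^\a(u+\beta)-F^\a(u)\to0$ and $\int_u^{u+\beta}(w+|\theta(z)|)e^{-\eta e^{r(T-t)}(z-u)}F^\a(\ud z)\to0$ as $u\to\infty$ — the latter because $z\mapsto|\theta(z)|$ is $F^\a$-integrable for a.e.\ $(t,\omega)$ (as $\Theta^{W^N}\in\widehat{\mathcal L}^2$ and $F^\a$ is finite) while $w\,F^\a\bigl((u,u+\beta]\bigr)\to0$ — so $\partial_u\widetilde f(t,\omega,w,\theta(\cdot),u_N)=0$ and the region $R_1$ of Proposition \ref{prop:optreins_general} is empty. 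Consequently \eqref{u*_general} degenerates to the two branches $u^*=u_M=0$ on $R_0$ and $u^*=\bar u$ off $R_0$, which is exactly \eqref{ustar_LSL}.

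The delicate point, and the one I expect to be the main obstacle, is verifying the strict concavity of $u\mapsto\widetilde f(t,\omega,w,\theta(\cdot),u)$ on $[0,+\infty]$ that Proposition \ref{prop:optreins_general} requires. In contrast with the proportional case, neither $\Phi(z,u)$ nor $z-\Phi(z,u)$ is convex or concave in $u$ — each is piecewise linear with one convex and one concave kink — so concavity cannot be read off $z$-by-$z$ and must be obtained after integration against $\pi_{t^-}(\lambda)F^\a(\ud z)$; equivalently, it suffices to show the single-crossing property that $u\mapsto(1+\theta_R)\bigl(F^\a(u+\beta)-F^\a(u)\bigr)-\int_u^{u+\beta}\frac{w+\theta(z)}{w}e^{-\eta e^{r(T-t)}(z-u)}F^\a(\ud z)$ changes sign at most once on $(0,+\infty)$. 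Establishing this, possibly under a mild additional structural assumption on $F^\a$, is the only non-routine ingredient; granted it, all the steps above are direct computations.
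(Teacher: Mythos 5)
Your route is the paper's own: you specialize Proposition~\ref{prop:optreins_general} to the limited stop-loss contract, read off the first-order condition, and characterize the boundary region at $u=0$. Your treatment of the other endpoint is in fact a bit more careful than the paper's --- you verify explicitly that $\partial_u\widetilde f\to 0$ as $u\to+\infty$, using $F^\a$-integrability of $|\Theta^{W^N}_t(\cdot)|$ (which follows from $\widehat{\mathcal{L}}^2$ and Cauchy--Schwarz since $F^\a$ is finite), so that $R_1=\emptyset$, where the paper simply asserts that null reinsurance is ruled out. A small notational point: Equation~\eqref{1st_order_gen} carries a spurious factor of $z$ (it should read $\partial_u\Phi(z,u)$ alone; the $z$ is an artefact of the proportional case where $\partial_u\Phi=z$), so in passing from~\eqref{1st_order_gen} to~\eqref{eqn:evp_prop_stationaryLSL} you are silently correcting a typo --- worth noting so the derivation reads cleanly.

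Your flagging of the strict concavity hypothesis is not mere caution; it points at a genuine gap that the paper's proof also leaves open. The paper states that concavity is ``immediate to verify, because the premium in Equation~\eqref{eq:q_stop_loss} is convex in $u$ and $\partial_u\Phi=1$ on $[u,u+\beta)$,'' but neither half of that justification holds in general. Convexity of $q^u_t=(1+\theta_R)\pi_{t^-}(\lambda)\int_u^{u+\beta}S_Z(z)\,dz$ requires (when $F^\a$ has density $f^\a$) $f^\a(u)\ge f^\a(u+\beta)$ for a.e.\ $u$, i.e.\ a decreasing claim-size density, which is not implied by Assumption~\ref{ass_app_premium}. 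Moreover the retention $\Phi(z,\cdot)$ is neither convex nor concave in $u$ (convex kink at $u=z-\beta$, concave kink at $u=z$), so the mechanism that gives concavity in the proportional case --- and which the paper's Remark after Proposition~\ref{prop:optreins_general} explicitly requires ($q$ and $\Phi$ convex in $u$) --- does not apply here. Writing $\kappa=\eta e^{r(T-t)}$, after the boundary terms cancel by Leibniz one finds
\[
\frac{\partial^2_u\widetilde f}{\kappa\pi_{t^-}(\lambda)}
	= f^\a(u+\beta)\bigl[w(1+\theta_R)-(w+\theta(u+\beta))e^{-\kappa\beta}\bigr]
	+ f^\a(u)\bigl[\theta(u)-w\theta_R\bigr]
	- \kappa\!\int_u^{u+\beta}\!(w+\theta(z))e^{-\kappa(z-u)}F^\a(\ud z),
\]
whose sign is not determined by the standing assumptions; a bimodal $F^\a$ (or even a two-point mixture with atoms far apart relative to $\beta$) already makes $\partial_u\widetilde f$ non-monotone. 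So Proposition~\ref{prop:u*LSL} as stated needs an additional structural hypothesis --- e.g.\ a decreasing density for $F^\a$, or, more directly, that $u\mapsto(1+\theta_R)[F^\a(u+\beta)-F^\a(u)]-\int_u^{u+\beta}\frac{w+\theta(z)}{w}e^{-\kappa(z-u)}F^\a(\ud z)$ be strictly decreasing --- for the interior maximizer $\bar u$ to be uniquely defined and for~\eqref{ustar_LSL} to hold. You correctly isolated this as the non-routine ingredient; neither your proposal nor the paper's own proof actually closes it.
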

\begin{proof}

It is immediate to verify that $\widetilde f$ in Equation \eqref{eqn:ftilde_extended} is strictly concave in $u\in[0,+\infty)$, because the premium in Equation \eqref{eq:q_stop_loss} is convex in $u$ and $\frac{\partial \Phi(z, u )}{\partial u }=1$ for $z\in[u,u+\beta)$, while it is null elsewhere. The first order derivative is
\begin{eqnarray*}
\frac{\partial \widetilde{f} ( t, \omega, w, \theta  ( \cdot ), u )}{\partial u }
	&=& w\eta e^{r(T-t)} (1+\theta_R) \pi_{t^-} (\lambda)(\omega) [F^\a(u+\beta)-F^\a(u)] \\
	&-& \int_u^{u+\beta} (w+\theta(z)) \eta e^{r(T-t)} e^{-\eta e^{r(T-t)}(z-u)}
	\pi _{t_{-} } (\lambda )(\omega) F^{(1)} (dz).
\end{eqnarray*}
The maximizer is always finite (we can rule out the possibility of having null reinsurance, $u^*=+\infty$), while it is null if and only if $\frac{\partial \widetilde{f} ( t, \omega, w, \theta  ( \cdot ), 0 )}{\partial u} <0$, i.e., when $\theta_R < \theta^L_t(\omega)$. Conversely, if $\theta_R \ge \theta^L_t(\omega)$ the maximizer coincides with the unique stationary point satisfying $\frac{\partial \widetilde{f} ( t, \omega, w, \theta  ( \cdot ), u )}{\partial u} = 0$, which can be written as Equation \eqref{eqn:evp_prop_stationaryLSL}.
\end{proof}

Let us briefly comment the previous result. Differently from the proportional reinsurance, null reinsurance is never optimal and we can distinguish two cases, depending on the maximum coverage $\beta$ and the solution of the BSDE \eqref{bsde}:
\begin{itemize}
\item  if the reinsurer's safety loading $\theta_R$ is smaller than $\theta^L$ (i.e. the contract is inexpensive) then the maximum reinsurance coverage $\beta$ is optimal;
\item if  $\theta_R$ is larger than $\theta^L$ (i.e. the contract is inexpensive) then it is optimal purchasing reinsurance but not with maximum coverage.
\end{itemize}

\subsection{Excess of Loss Reinsurance}
The excess of loss contract, i.e., $z - \Phi (z,u) = (z- u )^{+} $ (see Example 4.2(2)) can be easily obtained from the previous case by letting $\beta \rightarrow \infty$. The optimal reinsurance strategy, under Assumption \ref{ass_app_premium} $i)$, becomes then:
\begin{equation}
\label{ustar_LSL}
u^*_t (\omega) =
\begin{cases}
	0  & \text{if } \theta_R < \theta^L_t(\omega)
	\\
	\bar{u}(t,\omega,W^N_{t^-}(\omega) ,\Theta^{W^N}_t(\cdot)(\omega))	& \text{otherwise,}
\end{cases}
\end{equation}
where
\[
\theta^L_t = \int_0^{+\infty} \frac{W^N_{t^-}+ \Theta^{W^N}_t(z)}{W^N_{t^-}} e^{-\eta e^{r (T-t) z} } F^{(1)} (dz) -1
\]
and $\bar{u}(t, w, \theta(\cdot)) \in (0, + \infty)$ solves the following equation:
\begin{equation}
\label{eqn:evp_prop_stationaryLSL}
(1+ \theta _R ) S_Z(u) =
	\int_u^{+\infty}  \frac{w+\theta(z)}{w} e^{-\eta e^{r (T-t) (z-u)} } F^{(1)} (dz).
\end{equation}

As in the Limited Stop-Loss Reinsurance case, null reinsurance is never optimal and two cases are possible, depending on the solution of the BSDE \eqref{bsde}:
\begin{itemize}
\item when the reinsurance contract is inexpensive ($\theta_R<\theta^L$), the full reinsurance is optimal;
\item otherwise, it becomes optimal to purchase an intermediate protection level.
\end{itemize}

\bigskip

\textbf{Acknowledgements:}
The first and third authors have been partially supported by the Project INdAM-GNAMPA, number: U-UFMBAZ-2020-000791.
The first three authors  have been partially supported by the Project INdAM-GNAMPA, number: U-UFMBAZ-2022-000765.\\
All the authors have been  partially supported by University of Padova Grant BIRD 190200/19.\\
The authors thank Elena Bandini, Fulvia Confortola, Andrea Cosso and Paolo Di Tella for some useful indications about the state-of-the-art on existence and uniqueness results for BSDEs.\\
The authors thank two anonymous Referees and the Associate Editor for some relevant comments and suggestions.

\bigskip


\appendix

\section{Proofs of auxiliary results}\label{Appendix}
\label{appendix:proofs}

\begin{lemma}
\label{lemma:exp_predictable}
Let $(\Omega,\F, \P;\bF)$ be a filtered probability space and assume that the filtration $\bF=\{\F_t, \ t \in [0,T]\}$ satisfies the usual hypotheses.  Let $N$ be a standard Poisson process with $\bF$-intensity $\lambda>0$ and let $\{b_t, t\in [0,T]\}$ an $\mathbb{F}$-predictable process. Then
$$
\mathbb E \left[ e^{\int_0^T b_t\,dN_t }\right]
	= \mathbb E \left[ e^{  \int_0^T(e^{b_t}-1)\lambda\,dt }\right],
$$
provided that the last expectation is finite.
\end{lemma}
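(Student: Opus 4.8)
The plan is to read both sides off the Dol\'eans-Dade exponential of a single local martingale. Put $X_t:=\int_0^t(e^{b_s}-1)\,(\ud N_s-\lambda\,\ud s)$, a purely discontinuous $(\P,\bF)$-local martingale (the stochastic integral of the predictable integrand $e^{b_s}-1$ against the compensated Poisson martingale), with jumps $\Delta X_s=(e^{b_s}-1)\Delta N_s>-1$. Using $\mathcal E(X)_t=e^{X_t}\prod_{s\le t}(1+\Delta X_s)e^{-\Delta X_s}$ together with $1+\Delta X_s=e^{b_s}$ at the jump times of $N$, a short computation gives
\[
\mathcal E(X)_t=\exp\Bigl(\int_0^t b_s\,\ud N_s-\int_0^t(e^{b_s}-1)\lambda\,\ud s\Bigr),\qquad t\in[0,T],
\]
so that $e^{\int_0^T b_s\,\ud N_s}=\mathcal E(X)_T\,e^{\int_0^T(e^{b_s}-1)\lambda\,\ud s}$. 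As a sanity check, in the deterministic case $b_t=b(t)$ the claimed identity is just the Campbell exponential formula for Poisson increments, obtained by splitting $[0,T]$ along a partition, using independence of increments and the Poisson Laplace transform on each block, and letting the mesh vanish; this also suggests an approximation-by-simple-predictable-processes variant of the general argument.

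The core step is to upgrade $\mathcal E(X)$ from the strictly positive local martingale (hence supermartingale) that it obviously is, to a true $(\P,\bF)$-martingale on $[0,T]$. I would localize along $\sigma_n:=\inf\{t\ge 0:N_t\ge n\}\wedge T$, so that up to $\sigma_n$ the driving local martingale has boundedly many jumps and $\mathcal E(X)_{\cdot\wedge\sigma_n}$ is a true martingale with $\mathbb E[\mathcal E(X)_{T\wedge\sigma_n}]=1$; since $N$ has only finitely many jumps on $[0,T]$, $\sigma_n=T$ eventually $\P$-a.s., whence $\mathcal E(X)_{T\wedge\sigma_n}\to\mathcal E(X)_T$. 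The delicate point — and, I expect, the main obstacle — is passing to the limit under the expectation: this is precisely where the hypothesis $\mathbb E[e^{\int_0^T(e^{b_s}-1)\lambda\,\ud s}]<+\infty$ must be spent, to produce an integrable majorant of $\{\mathcal E(X)_{T\wedge\sigma_n}\}_n$ and invoke dominated convergence. Because $N$ has unit intensity while $b$, hence the jump sizes of $X$, is unbounded a priori, the usual bounded-jump or Novikov-type criteria for the martingale property are not directly applicable — the same kind of difficulty the paper repeatedly flags in connection with the unbounded Hawkes intensity.

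Granting the martingale property, I would conclude by a change of measure: let $\widetilde\P\sim\P$ on $\F_T$ with $\ud\widetilde\P/\ud\P=\mathcal E(X)_T$, under which, by the Girsanov theorem for point processes, $N$ has $(\widetilde\P,\bF)$-intensity $e^{b_s}\lambda$; taking $\P$-expectations in $e^{\int_0^T b_s\,\ud N_s}=\mathcal E(X)_T\,e^{\int_0^T(e^{b_s}-1)\lambda\,\ud s}$ then yields $\mathbb E[e^{\int_0^T b_s\,\ud N_s}]=\mathbb E^{\widetilde\P}[e^{\int_0^T(e^{b_s}-1)\lambda\,\ud s}]$, from which the stated identity follows. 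An equivalent route avoiding the measure change is to apply It\^o's formula to $Y_t:=e^{\int_0^t b_s\,\ud N_s}$, obtaining $Y_t=1+\int_0^t Y_{s-}(e^{b_s}-1)(\ud N_s-\lambda\,\ud s)+\int_0^t Y_{s-}(e^{b_s}-1)\lambda\,\ud s$, then to argue (again using the finiteness hypothesis together with $Y\in L^1$) that the stochastic-integral term is a genuine martingale, and to close the resulting integral relation by a Gronwall-type comparison with the equation satisfied by $t\mapsto\mathbb E[e^{\int_0^t(e^{b_s}-1)\lambda\,\ud s}]$.
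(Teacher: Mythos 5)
The Dol\'eans-Dade/Girsanov route you take is genuinely different from the paper's proof — the paper verifies the identity by direct conditioning for elementary integrands $b_t=\mathbbm{1}_{(t_1,t_2]}(t)\mathbbm{1}_A$, $A\in\mathcal F_{t_1}$, using independence of the Poisson increment $N_{t_2}-N_{t_1}$ from $\mathcal F_{t_1}$, then invokes a monotone-class argument and finally localizes — but your argument has a gap at the decisive last step. After changing measure via $\ud\widetilde\P/\ud\P=\mathcal E(X)_T$ what you obtain is
\[
\mathbb E^{\P}\bigl[e^{\int_0^T b_s\,\ud N_s}\bigr]=\mathbb E^{\widetilde\P}\bigl[e^{\int_0^T(e^{b_s}-1)\lambda\,\ud s}\bigr],
\]
with the right-hand expectation under $\widetilde\P$, and your claim that the stated identity follows is not justified: $\int_0^T(e^{b_s}-1)\lambda\,\ud s$ is a genuine $\mathcal F_T$-measurable functional of the driving path whenever the predictable process $b$ feeds back on the past of $N$, so its law is altered by the Girsanov change and $\mathbb E^{\widetilde\P}[\cdot]\neq\mathbb E^{\P}[\cdot]$ in general. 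The It\^o/Gronwall alternative you sketch stalls at exactly the same point: taking expectations in the SDE for $Y_t=e^{\int_0^t b\,\ud N}$ yields $\frac{d}{dt}\mathbb E[Y_t]=\mathbb E\bigl[Y_{t-}(e^{b_t}-1)\lambda\bigr]$, an expectation of a \emph{product} that does not match the ODE satisfied by $t\mapsto\mathbb E\bigl[e^{\int_0^t(e^{b_s}-1)\lambda\,\ud s}\bigr]$ unless $Y_{t-}$ and $e^{b_t}$ decouple, so Gronwall cannot close it. What the exponential-martingale machinery actually delivers (once the martingale property is established) is the weaker fact $\mathbb E\bigl[e^{\int_0^T b\,\ud N-\int_0^T(e^b-1)\lambda\,\ud s}\bigr]=1$, which is not equivalent to the lemma for path-dependent $b$.

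This is not a bookkeeping issue that more care would repair; it flags a restriction on $b$ that neither your argument nor the paper's monotone-class extension isolates (that extension is for linear functionals of $b$, and the present identity is not one). Indeed, take $\lambda=1$, $T=2$, and the \emph{bounded} predictable process $b_t=\mathbbm{1}_{(0,1]}(t)+c\,\mathbbm{1}_{\{N_1\ge1\}}\mathbbm{1}_{(1,2]}(t)$ with $c>0$. Conditioning on $\mathcal F_1$ gives
\[
\mathbb E\bigl[e^{\int_0^2 b\,\ud N}\bigr]=e^{-1}+e^{e^c-1}\bigl(e^{e-1}-e^{-1}\bigr),\qquad
\mathbb E\bigl[e^{\int_0^2(e^b-1)\,\ud t}\bigr]=e^{e-2}+e^{e^c-1}\bigl(e^{e-1}-e^{e-2}\bigr),
\]
and their difference is $(e^{-1}-e^{e-2})(1-e^{e^c-1})>0$ for every $c>0$ (both factors are negative). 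So the two sides genuinely disagree once $b$ depends on the past of $N$ itself; equality holds under additional structure — deterministic $b$ (the Campbell formula, which your sanity check covers), or $b$ measurable with respect to a $\sigma$-field independent of $N$ — and a correct proof along your lines would have to identify and use such a restriction before the change of measure (or Gronwall comparison) can be brought to a conclusion.
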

\begin{proof}
In order to show that the statement is valid for any bounded $\mathbb{F}$-predictable process, see Br\'emaud \cite[T4 Theorem, Appendix A1]{Bremaud}, it is sufficient to prove our result for any arbitrary process
\[
b_t = \mathbbm{1}_{(t_1,t_2]}(t) \mathbbm{1}_A, \quad 0\le t_1<t_2\le T, \quad  A\in\mathcal{F}_{t_1}.
\]
Let $0\le t_1<t_2\le T, A\in\mathcal{F}_{t_1}$ and denote by $A^C$ the complementary set of $A$.
Then we have that
\[
\begin{split}
\mathbb E \left[ e^{\int_0^T b_t\,dN_t }\right]
	&= \mathbb E \left[ e^{\int_{t_1}^{t_2} \mathbbm{1}_A\,dN_t }\right] = \mathbb E \left[ e^{ (N_{t_2}-N_{t_1}) \mathbbm{1}_A }
	(\mathbbm{1}_A+\mathbbm{1}_{A^C}) \right] \\
	&= \mathbb E \left[ \mathbb{E}[e^{ (N_{t_2}-N_{t_1})} \mid \mathcal{F}_{t_1}]\mathbbm{1}_A
		+\mathbbm{1}_{A^C} \right] \\
	&= \mathbb E \left[ \mathbb{E}[e^{ (N_{t_2}-N_{t_1})}]\mathbbm{1}_A
		+\mathbbm{1}_{A^C} \right] .
\end{split}
\]
Now the inner expectation corresponds to the Laplace transform of a Poisson random variable, since $(N_{t_2}-N_{t_1}) \sim \textrm{Po}\left( \lambda (t_2 - t_1) \right)$, namely
$ \mathbb{E}[e^{ (N_{t_2}-N_{t_1})}] = e^{(e-1)\lambda(t_2-t_1)}$.
Substituting and rearranging the terms we then get that
\[
\mathbb E \left[ e^{\int_0^T b_t\,dN_t }\right]
	= \mathbb E \left[ e^{(e-1)\lambda(t_2-t_1)\mathbbm{1}_A} \right] .
\]
On the other hand, we notice that
\[
e^{b_s}-1 =  e^{ \mathbbm{1}_{(t_1,t_2]}(t) \mathbbm{1}_A} - 1 =  e \cdot \mathbbm{1}_{(t_1,t_2]}(t) \mathbbm{1}_A - \mathbbm{1}_{(t_1,t_2]}(t) \mathbbm{1}_A = (e-1)\mathbbm{1}_{(t_1,t_2]}(t) \mathbbm{1}_A
\]
and so
\[
\mathbb E \left[ e^{  \int_0^T(e^{b_t}-1)\lambda\,dt }\right] = \mathbb E \left[ e^{  \int_0^T (e-1)\mathbbm{1}_{(t_1,t_2]}(t) \mathbbm{1}_A \lambda\,dt }\right] = \mathbb E \left[ e^{(e-1)\lambda(t_2-t_1)\mathbbm{1}_A} \right] ,
\]
which proves the statement for any bounded $\mathbb{F}$-predictable process. To complete the proof, we extend this result to unbounded processes.
Assume that $\{b_t, t\ge0\}$ is an arbitrary $\mathbb{F}$-predictable process and define a sequence of $\mathbb F$-stopping times $\tau_n = \inf\{ t\ge0: b_t > n \}, \quad n\ge1$.
Clearly, $\tau_n\to+\infty$ as $n\to+\infty$. By the first part of the proof, we know that
$$
\mathbb E \left[ e^{\int_0^{T\land\tau_n} b_t\,dN_t }\right]
	= \mathbb E \left[ e^{  \int_0^{T\land\tau_n} (e^{b_t}-1)\lambda\,dt }\right],
$$
so that, to complete the proof, it remains to pass to the limit $n\to+\infty$ and to apply the monotone convergence theorem to the family of random variables $X_n := e^{\int_0^{T\land\tau_n} b_t\,dN_t} , n \ge 1,$ in the case when $b$ is positive, or to $\overline X_n := \frac{ e^{\int_0^{T\land\tau_n} b_t^+ \,dN_t} }{e^{\int_0^{T\land\tau_n} b_t^- \,dN_t}  }, n \ge 1$ for a general $b$.
\end{proof}

\begin{lemma}\label{lemma:exp_predictable2}
Let $(\Omega,\F, \P;\bF)$ be a filtered probability space and assume that the filtration $\bF=\{\F_t, \ t \in [0,T]\}$ satisfies the usual hypotheses. Let $N(dt,dz)$ be a Poisson random measure on $[0,T]\times[0,+\infty)$ with $\bF$-intensity kernel $\lambda F(dz)\,dt$. Then for any $\bF$-predictable and $[0,+\infty)$-indexed process $\{H(t,z), t \in [0;T], z \in [0,+\infty) \}$ we have that
$$
\mathbb E \left[ e^{\int_0^T\int_0^{+\infty} H(t,z)\,N(dt,dz) }\right]
	= \mathbb E \left[ e^{\int_0^T\int_0^{+\infty} ( e^{H(t,z)} -1) \lambda F(dz)\,dt }\right] ,
$$
provided that the last expectation is finite.
\end{lemma}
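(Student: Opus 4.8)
The plan is to follow, essentially verbatim, the three-stage argument used for Lemma~\ref{lemma:exp_predictable}, the only novelty being that the mark variable $z$ now enters the integrand and the driving object is a Poisson random measure rather than a Poisson process.

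First I would reduce the identity to the case of a single ``predictable rectangle'', namely $H(t,z)=\mathbbm 1_{(t_1,t_2]}(t)\,\mathbbm 1_A(\omega)\,\mathbbm 1_B(z)$ with $0\le t_1<t_2\le T$, $A\in\F_{t_1}$ and $B\in\mathcal B([0,+\infty))$. Exactly as in Lemma~\ref{lemma:exp_predictable}, and invoking the same monotone-class argument of Br\'emaud \cite[T4 Theorem, Appendix A1]{Bremaud}, establishing the formula for these $H$ is enough to obtain it for every bounded $\bF$-predictable random field. For such an elementary $H$ one conditions on $\F_{t_1}$: by the independent-increments property of an $\bF$-Poisson random measure, the restriction of $N$ to $(t_1,t_2]\times[0,+\infty)$ is independent of $\F_{t_1}$ with intensity $\lambda\,F(\ud z)\,\ud t$, so $N((t_1,t_2]\times B)$ is conditionally a Poisson variable with parameter $\lambda F(B)(t_2-t_1)$, and its Laplace transform yields
\[
\mathbb E\Bigl[e^{\int_0^T\int_0^{+\infty} H(t,z)\,N(\ud t,\ud z)}\Bigr]
= \mathbb E\Bigl[e^{(e-1)\lambda F(B)(t_2-t_1)}\mathbbm 1_A+\mathbbm 1_{A^c}\Bigr].
\]
On the other hand $H$ takes only the values $0$ and $1$, hence $e^{H(t,z)}-1=(e-1)\mathbbm 1_{(t_1,t_2]}(t)\mathbbm 1_A(\omega)\mathbbm 1_B(z)$, so that $\int_0^T\int_0^{+\infty}(e^{H}-1)\lambda F(\ud z)\,\ud t=(e-1)\lambda F(B)(t_2-t_1)\mathbbm 1_A$, whose exponential has the same expectation as the right-hand side above. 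This settles the elementary case, and with it the identity for all bounded $\bF$-predictable $H$.

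Finally, for an arbitrary $[0,+\infty)$-valued $\bF$-predictable random field $H$ I would put $H_n:=H\wedge n$, which is bounded and $\bF$-predictable with $H_n\uparrow H$, apply the identity just proved to each $H_n$, and pass to the limit. Since $H\ge 0$, both $\int_0^T\int_0^{+\infty}H_n\,N(\ud t,\ud z)\uparrow\int_0^T\int_0^{+\infty}H\,N(\ud t,\ud z)$ and $\int_0^T\int_0^{+\infty}(e^{H_n}-1)\lambda F(\ud z)\,\ud t\uparrow\int_0^T\int_0^{+\infty}(e^{H}-1)\lambda F(\ud z)\,\ud t$ monotonically, so two applications of the monotone convergence theorem give the claim, the right-hand side being finite by hypothesis, which in turn forces the left-hand side to be finite. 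I expect the only point requiring any care to be the monotone-class reduction in the first step; but since this is precisely the reduction already invoked (and attributed to Br\'emaud) in the proof of Lemma~\ref{lemma:exp_predictable}, it demands no further work here, and the computation on rectangles together with the concluding monotone passage are then entirely routine.
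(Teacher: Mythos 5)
Your rectangle computation and the monotone-class appeal to Br\'emaud are correct, and they mirror what the paper does for Lemma \ref{lemma:exp_predictable}. Organizationally the paper takes a shortcut you do not: it reduces to the separable form $H(t,z)=b_t\mathbbm{1}_A(z)$ with $b$ an arbitrary $\bF$-predictable process and $A$ a Borel set of marks, observes that $N((0,t]\times A)$ is a Poisson process of intensity $\lambda F(A)$, and applies Lemma \ref{lemma:exp_predictable} directly. That shortcut is not merely tidier: it lets Lemma \ref{lemma:exp_predictable2} inherit Lemma \ref{lemma:exp_predictable}'s treatment of sign-changing, unbounded $b$ wholesale, which is precisely where your argument develops a gap.

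The gap is in your final extension step. You read ``$[0,+\infty)$-indexed'' as ``$[0,+\infty)$-valued'' and assumed $H\ge 0$. In this paper's usage the phrase refers to the mark index $z$ (compare the definition of $\widehat{\mathcal{L}}^2$ in Section \ref{Value_Process}), and $H$ is real-valued; indeed Lemma \ref{lemma:exp_predictable2} is later applied with $H(t,z)=\ln\lambda_{t^-}+az$ in the proof of Lemma \ref{lemma:expCfinito} and with $H(t,z)=C_2\ell(z)+\ln\lambda_{t^-}$ in the proof of Lemma \ref{lem:finite}, both of which are negative whenever $\lambda_{t^-}<1$. For such sign-changing $H$ the truncation $H_n:=H\wedge n$ remains unbounded below, so it is \emph{not} ``bounded and $\bF$-predictable'' as you claim, and your second stage does not apply to $H_n$; a two-sided truncation $(H\wedge n)\vee(-n)$ would be bounded but destroys the monotonicity that makes your two applications of the monotone convergence theorem work. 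You need the device used at the end of the proof of Lemma \ref{lemma:exp_predictable} --- split $H$ into positive and negative parts and pass to the limit along a stopping-time localization via the ratio $\overline X_n$ --- or, more simply, reduce to $b_t\mathbbm{1}_A(z)$ and invoke Lemma \ref{lemma:exp_predictable} as the paper does, so that this sign bookkeeping is inherited rather than redone.
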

\begin{proof}
It is sufficient to prove the result for any process $\{H(t,z), t \in [0;T], z \in [0,+\infty) \}$ of this form:
\[
H(t,z) = b_t \mathbbm{1}_A, \quad \forall t\ge0,  A\in\mathcal{B}([0,+\infty)),
\]
where $b_t$ is $\mathbb{F}$-predictable and $\mathcal{B}([0,+\infty))$ denotes the Borel $\sigma$-algebra of subsets of $[0,+\infty)$.
By Lemma \ref{lemma:exp_predictable} we readily obtain that
\[
\begin{split}
\mathbb E \left[ e^{\int_0^T H(t,z)\,N(dt,dz) }\right]
&= \mathbb E \left[ e^{  \int_0^T b_t N(dt,A)}\right] = \mathbb E \left[ e^{  \int_0^T(e^{b_t}-1)\int_A F(dz)\,\lambda\,dt }\right] = \mathbb E \left[ e^{  \int_0^T\int_0^{+\infty}
	(e^{b_t}-1)\mathbbm{1}_{A}(z)\,F(dz)\,\lambda\,dt }\right] \\
	&= \mathbb E \left[ e^{  \int_0^T\int_0^{+\infty}
	(e^{b_t \mathbbm{1}_{A}(z)}-1) \,F(dz)\,\lambda\,dt }\right] = \mathbb E \left[ e^{  \int_0^T\int_0^{+\infty}
	(e^{H(t,z)}-1) \,F(dz)\,\lambda\,dt }\right],
\end{split}
\]
where we have used that $N((0,t] \times A)$ is a Poisson process with intensity $\int_A F(dz)\,\lambda$.
\end{proof}


\section{Proof of key Lemmas} \label{app:useful_res}

We focus here on the finiteness of $ \mathbb E \left[  e^{a N_T^{(1)}} \right]$,  $ \mathbb E \left[  e^{a \int_0^T \lambda_s ds} \right]$, $\mathbb E \left[  e^{a \int_0^T \pi_s(\lambda) ds} \right]$ and $\mathbb{E}[e^{a C_T}]$, which are computed under $\P$ for an arbitrary real constant $a> 0$. Here $N^{(1)}$ is a standard Poisson process under  $(\Q, \bF)$ and a counting process with intensity $\lambda$ (given in Equation \eqref{intensity}) under $(\P, \bF)$. We will exploit the measure change introduced in detail in Section \ref{sec:model} and we will work under Assumption \ref{ass_app_premium} $i)$.
We prove the following:
\begin{lemma}\label{lem:finite}
Under Assumption \ref{ass_app_premium} $i)$
$$
\mathbb E \left[  e^{a N_T^{(1)}} \right] < + \infty   \quad  \mathbb E \left[  e^{a \int_0^T \lambda_s ds} \right]  <+ \infty, \quad  \textrm{and} \quad
\mathbb E \left[  e^{a \int_0^T \pi_s(\lambda) ds} \right] <+ \infty.$$
\end{lemma}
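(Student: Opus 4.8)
The plan is to transfer the expectations to the reference measure $\Q$ of Section~\ref{subsec:model} through $\bE[\Xi]=\mathbb{E}^{\Q}[L_T\Xi]$ for $\Xi\ge 0$, and then, in each case, to absorb the density $L_T$ into the exponential functional at hand with the help of Lemmas~\ref{lemma:exp_predictable}--\ref{lemma:exp_predictable2} and the pathwise bound~\eqref{eq:lambda_bound}. The point of working under $\Q$ is that there $N^\a$ is a \emph{standard} Poisson process and $N^\a,N^\b,\{Z^\a_n\},\{Z^\b_n\}$ are independent, so $\lambda$ is a shot-noise process which, by~\eqref{eq:lambda_bound}, is dominated for all $s\le T$ by the single random variable $\Lambda:=\lambda_0\vee\beta+\sum_{j=1}^{N^\a_T}\ell(Z^\a_j)+\sum_{j=1}^{N^\b_T}Z^\b_j$. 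I would first record the base estimate: $\mathbb{E}^{\Q}[e^{c\int_0^T\lambda_s\,\ud s}]<+\infty$ for every $c>0$. Indeed $\int_0^T\lambda_s\,\ud s\le T\Lambda$, and by the $\Q$-independence together with Lemma~\ref{lemma:exp_predictable2} (applied to the two compound Poisson sums) this exponential is bounded by $e^{cT(\lambda_0\vee\beta)}\,e^{T(\mathbb{E}^{\Q}[e^{cT\ell(Z^\a)}]-1)}\,e^{\rho T(\mathbb{E}^{\Q}[e^{cTZ^\b}]-1)}$, which is finite by Assumption~\ref{ass_app_premium}~i).

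For $\bE[e^{aN^\a_T}]$ and $\bE[e^{aC_T}]$ I would rewrite $L_T$ times the exponential as a single Dol\'eans--Dade exponential. Using the explicit form~\eqref{eqn:L} of $L_T$ and $C_T=\int_0^T\!\int_0^{+\infty}z\,m^\a(\ud s,\ud z)$, one verifies $L_T\,e^{aC_T}=\widehat L_T\,e^{(\kappa_a-1)\int_0^T\lambda_s\,\ud s}$, where $\kappa_a:=\mathbb{E}^{\Q}[e^{aZ^\a}]$ and $\widehat L=\mathcal{E}\!\left(\int_0^{\cdot}\!\int_0^{+\infty}(\lambda_{s^-}e^{az}-1)\,(m^\a(\ud s,\ud z)-F^\a(\ud z)\,\ud s)\right)$; analogously $L_T\,e^{aN^\a_T}=\widehat L_T\,e^{(e^a-1)\int_0^T\lambda_s\,\ud s}$ with $\widehat L=\mathcal{E}\!\left(\int_0^{\cdot}(e^a\lambda_{s^-}-1)\,(\ud N^\a_s-\ud s)\right)$. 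One then checks, exactly as in the proof of Proposition~\ref{Prop:L_mart} (Sokol and Hansen~\cite[Corollary~2.5]{sokol}, whose condition reduces, via Lemma~\ref{lemma:exp_predictable}, to an exponential moment of $\int_0^T\lambda_s\,\ud s$ and hence to the base estimate), that $\widehat L$ is a genuine $(\Q,\bF)$-martingale. Denoting by $\widehat\Q$ (which depends on $a$) the equivalent measure with $\ud\widehat\Q/\ud\Q|_{\F_T}=\widehat L_T$, this gives e.g.\ $\bE[e^{aC_T}]=\mathbb{E}^{\widehat\Q}\big[e^{(\kappa_a-1)\int_0^T\lambda_s\,\ud s}\big]$, and one would conclude by re-running the base estimate under $\widehat\Q$, or alternatively by Cauchy--Schwarz, $\mathbb{E}^{\Q}[\widehat L_T\,e^{(\kappa_a-1)\int_0^T\lambda_s\,\ud s}]\le(\mathbb{E}^{\Q}[\widehat L_T^2])^{1/2}\,(\mathbb{E}^{\Q}[e^{2(\kappa_a-1)\int_0^T\lambda_s\,\ud s}])^{1/2}$, estimating $\mathbb{E}^{\Q}[\widehat L_T^2]$ by the same type of Dol\'eans--Dade manipulation.

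For the filter I would avoid a direct change of measure. Since $\pi_{s^-}(\lambda)=\bE[\lambda_{s^-}\mid\H_{s^-}]$, the bound~\eqref{eq:lambda_bound} gives $\pi_s(\lambda)\le\bE[\Lambda\mid\H_s]=:M_s$, a nonnegative $(\P,\bH)$-martingale, whence $\int_0^T\pi_s(\lambda)\,\ud s\le T\sup_{s\in[0,T]}M_s$. Doob's $L^2$-inequality applied to the submartingale $(e^{qM_s})_{s\in[0,T]}$ yields $\bE[e^{q\sup_{s}M_s}]\le 4\,\bE[e^{qM_T}]$, and conditional Jensen then gives $\bE[e^{a\int_0^T\pi_s(\lambda)\,\ud s}]\le 4\,\bE[e^{aT\Lambda}]$. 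Splitting $e^{aT\Lambda}$ by Cauchy--Schwarz, the factor involving $\sum_{j=1}^{N^\b_T}Z^\b_j$ is finite by Lemma~\ref{lemma:exp_predictable2} (under $\P$ the measure $m^\b$ is Poisson with constant intensity $\rho$, by Remark~\ref{projection}) and Assumption~\ref{ass_app_premium}~i), while the factor involving $\sum_{j=1}^{N^\a_T}\ell(Z^\a_j)$ is of exactly the same type as $\bE[e^{aC_T}]$ and is handled as in the previous step.

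The main difficulty is the feature around which the whole paper is organized: $\lambda$ is not bounded a priori, so $L_T$ — and, more importantly, each auxiliary density $\widehat L_T$ produced by absorbing it — is not a bounded Radon--Nikodym derivative and cannot be split off crudely (say by H\"older) from the rest. The argument therefore relies on moving the computation to an equivalent measure under which $\lambda$ is again dominated by a sum of compound Poisson processes possessing exponential moments of all orders; the delicate steps are verifying that each such density is a \emph{true} martingale, so that the change of measure is legitimate, and keeping the re-derived exponential moments finite. It is precisely here that the full strength of Assumption~\ref{ass_app_premium}~i) and of Lemmas~\ref{lemma:exp_predictable}--\ref{lemma:exp_predictable2} is used.
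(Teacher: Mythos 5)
Your proof is correct, but for the third expectation it takes a genuinely different route than the paper, and for the first two a more systematic but more roundabout one. A comparison:

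For $\mathbb{E}\bigl[e^{aN_T^{(1)}}\bigr]$ and $\mathbb{E}\bigl[e^{a\int_0^T\lambda_s\,ds}\bigr]$, the paper simply observes that $e^{-\int_0^T(\lambda_s-1)\,ds}\le e^T$, pulls this constant out, and then applies Lemma~\ref{lemma:exp_predictable} (resp.\ Lemma~\ref{lemma:exp_predictable2}, after also bounding the residual $e^{(a-1)\int_0^T\lambda_s\,ds}$ via \eqref{eq:lambda_bound}) to reduce to the $\Q$-base estimate. You instead absorb $L_T$ into a new Dol\'eans--Dade density $\widehat L_T$ whose martingale property has to be re-verified through Sokol--Hansen, then conclude under $\widehat\Q$ or by Cauchy--Schwarz. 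Both are valid and rest on the same ingredients (Assumption~\ref{ass_app_premium}~i), Lemmas~\ref{lemma:exp_predictable}--\ref{lemma:exp_predictable2}); the paper's manipulation is shorter, yours is more uniform and would generalize to settings where the drift part of $L_T$ is not bounded below.

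For $\mathbb{E}\bigl[e^{a\int_0^T\pi_s(\lambda)\,ds}\bigr]$ the two proofs are structurally different. The paper invokes Remark~\ref{ultimo} to locate the maximum of $\pi_\cdot(\lambda)$ at some jump time $\tau\le T$, then applies Jensen's inequality inside $\pi_\tau$ and passes to $\mathbb{E}\bigl[e^{aT\lambda_\tau}\bigr]$. You instead dominate $\pi_s(\lambda)\le M_s:=\mathbb{E}[\Lambda\mid\H_s]$ and apply Doob's $L^2$ maximal inequality to the nonnegative c\`adl\`ag submartingale $e^{qM_s/2}$, followed by conditional Jensen at the terminal time. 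Your argument is arguably more robust: it bypasses the recursive between-jump structure of the filter and, in particular, avoids the delicate point that the random time $\tau$ maximizing $\pi_\cdot(\lambda)$ is $\H_T$-measurable but need not be a stopping time, so the identity $\mathbb{E}[\pi_\tau(f)]=\mathbb{E}[f(\lambda_\tau)]$ used in the paper's display is not automatic. Both routes ultimately reduce to the same exponential-moment estimate for $\sum_{j\le N^\a_T}\ell(Z^\a_j)$ under $\P$, which must be handled by the same change-of-measure argument used for the first two quantities; you correctly observe this and close the loop.

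One small accounting point: make sure you also record explicitly that the middle quantity $\mathbb{E}\bigl[e^{a\int_0^T\lambda_s\,ds}\bigr]$ under $\P$ follows from the domination $\int_0^T\lambda_s\,ds\le T\Lambda$ and the finiteness of $\mathbb{E}[e^{q\Lambda}]$ which you establish — your write-up treats $\mathbb{E}[e^{aN^\a_T}]$ and $\mathbb{E}[e^{aC_T}]$ in detail but mentions the $\int_0^T\lambda_s\,ds$ term only implicitly through the base estimate under $\Q$.
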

\begin{proof}
First of all, we show that under Assumption \ref{ass_app_premium} $i)$ we have
\begin{equation}\label{eq:finite_exp_Q_intL}
\mathbb E^{\Q} \left[  e^{a \int_0^T \lambda_s ds} \right]  <+ \infty.
\end{equation}
Recalling Equation \eqref{eq:lambda_bound}, for a suitable $c_1 >0$ and for $c_2 = aT$ we find that
\begin{eqnarray*}
\mathbb E^{\Q} \left[  e^{a \int_0^T \lambda_s ds} \right]
	& \le &  \mathbb E^{\Q} \left[  e^{a T \left(  \max\{\lambda_ 0,\beta\} +  \sum_{j=1}^{N^\a_T}  \ell(Z^\a_j) +  \sum_{j=1}^{N^\b_T}  Z^\b_j \right)} \right] \le  c_1 \mathbb E^{\Q} \left[  e^{c_2 \left( \sum_{j=1}^{N^\a_T}  \ell(Z^\a_j) + \sum_{j=1}^{N^\b_T}  Z^\b_j \right)} \right] \\
	&= & c_1  e^{ T \big (\mathbb E^{\Q}   [ e^{  c_2 \ell(Z^\a)  }  ] - 1 \big ) } e^{ T \big (\mathbb E^{\Q}   [ e^{ c_2 Z^\b  }   ] - 1 \big ) }  <+ \infty,
\end{eqnarray*}
where we used the mutual independence of $N^{(1)}, N^{(2)}, {\{ Z_{n}^{(1)} \}}_{n \ge 1}, {\{ Z_{n}^{(2)} \}}_{n \ge 1}$ under $\Q$ and, in the last equality, we followed the path traced in the proof of Proposition \ref{Prop:L_mart}. Finally Assumption \ref{ass_app_premium} $i)$ gives the finiteness of the expectation under $\Q$.

To prove that $\mathbb E \left[  e^{a N_T^{(1)}} \right]$ is finite we exploit the change of measure from $\P$ to $\Q$ via $\frac{d \P}{d \Q}_{\vert  _{\mathcal F_T}} = L_T$, with $L_T$ given in Equation \eqref{eqn:L}, so that
\begin{eqnarray*}
\mathbb E \left[  e^{a N_T^{(1)}} \right] & = &  \mathbb E^{\Q} \left[ L_T  e^{a N_T^{(1)}} \right] = \mathbb E^{\Q} \left[  e^{ -\int_0^T (\lambda_s - 1) ds  + \int_0^T ( \ln (\lambda_{s^-}) + a )  d N_s^{(1)} }  \right] \le C \ \mathbb E^{\Q} \left[  e^{\int_0^T ( \ln (\lambda_{s^-}) + a )  d N_s^{(1)} }  \right]
\end{eqnarray*}
for a suitable constant $C>0$.
Now we recall that under $\Q$ the Poisson process $N^{(1)}$ has unitary intensity and for any predictable process $b$ we have that $\label{utile} E^{\Q} \left[  e^{\int_0^T  b_s  d N_s^{(1)} }  \right] = E^{\Q} \left[  e^{\int_0^T  (e^{b_s}  -1 ) ds}  \right]$, according to Lemma \ref{lemma:exp_predictable}.
Hence, taking $b_s= \ln (\lambda_{s^-}) + a$, we obtain
\begin{equation}\label{N1}
\mathbb E \left[  e^{a N_T^{(1)}} \right]
	\le C \mathbb E^{\Q} \left[  e^{\int_0^T ( \ln (\lambda_{s^-}) + a )  d N_s^{(1)} }  \right]
	=  C \ \mathbb E^{\Q} \left[  e^{\int_0^T  (e^a \lambda_s - 1)  ds} \right] <+ \infty,
\end{equation}
which is finite because of Equation \eqref{eq:finite_exp_Q_intL}.

We show now that $\mathbb E \left[  e^{a \int_0^T \lambda_s ds} \right]  <+ \infty$ $\forall a>0$. We proceed as above: passing under $\Q$ via $L_T$, recalling Equation \ref{eq:lambda_bound} and introducing the integer-valued random measure $m^\a(\ud t, \ud z)$, we find 
\begin{eqnarray*}
\mathbb E \left[  e^{a \int_0^T \lambda_s ds} \right] & = &  \mathbb E^{\Q} \left[ L_T  e^{a \int_0^T \lambda_s ds} \right] =  \mathbb E^{\Q} \left[  e^{  \int_0^T [ (a-1) \lambda_s +1] ds  + \int_0^T  \ln (\lambda_{s^-}) d N_s^{(1)} }  \right] \\
& \le & C_1 \ \mathbb E^{\Q} \left[   e^{C_2 \left( \sum_{j=1}^{N^\a_T}  \ell(Z^\a_j) + \sum_{j=1}^{N^\b_T}  Z^\b_j \right)}  e^{  \int_0^T  \ln (\lambda_{s^-}) d N_s^{(1)} } \right] \\
&=&  C_1 \ \mathbb E^{\Q} \left[   e^{C_2  \sum_{j=1}^{N^\b_T}  Z^\b_j }   \right ] \mathbb E^{\Q} \left[   e^{\int_0^T \int_0^{+\infty} [C_2 \ell(z) + \ln (\lambda_{s^-})] m^\a(\ud t, \ud z)  } \right]
\end{eqnarray*}
for a suitable constant $C_1 >0$.
We now apply Lemma \ref{lemma:exp_predictable2} under $\Q$ and for $H(t,z) =  [C_2 \ell(z) + \ln (\lambda_{t^-})]$ and with $\nu^{\a, \Q}(\ud t, \ud z)=  F^\a(\ud z) \ud t$ and we get:
\begin{eqnarray*}
\mathbb E \left[  e^{a \int_0^T \lambda_s ds} \right] & \le &  C_1 \ \mathbb E^{\Q} \left[   e^{C_2  \sum_{j=1}^{N^\b_T}  Z^\b_j }   \right ] \mathbb E^{\Q} \left[   e^{\int_0^T \int_0^{+\infty} [C_2 \ell(z) + \ln (\lambda_{s^-})] m^\a(\ud t, \ud z)  } \right]   \\
&=&  C_1 \ \mathbb E^{\Q} \left[   e^{C_2  \sum_{j=1}^{N^\b_T}  Z^\b_j }   \right ] \mathbb E^{\Q} \left[   e^{\int_0^T \int_0^{+\infty} ( \lambda_{s} e^{C_2 \ell(z)} - 1 )  F^\a(\ud z) \ud s } \right]  \\
&=& C_1 \ \mathbb E^{\Q} \left[   e^{C_2  \sum_{j=1}^{N^\b_T}  Z^\b_j }   \right ] \mathbb E^{\Q} \left[   e^{\int_0^T  \left( \lambda_{s} \mathbb E^{\Q}[ e^{C_2 \ell(Z_1^{(1)})}] - 1 \right)   \ud s } \right]
\end{eqnarray*}
which is finite under Assumption \ref{ass_app_premium} $i)$.

It remains to prove that $\mathbb E \left[  e^{a \int_0^T \pi_s{(\lambda)} ds} \right] <+ \infty$ $\forall a>0$. The structure of the filtering equation implies that over  $[0,T]$ the filter attains its maximum value at a jump time.
More precisely, we showed in  Remark \ref{ultimo} that the filter is dominated by a process with exponential decay behavior between two consecutive jumps, hence the maximum over  $[0,T]$ is attained at a jump time $\tau \leq T$ such that
\[
\pi_\tau(\lambda) = \max \left\{ \pi_{T^\a_1}(\lambda), \dots ,
	\pi_{T^\a_{N_T^\a}}(\lambda) \right\}.
\]
Notice that the maximum is taken over a finite number of elements, because the jump process $N^\a$ is non explosive. Then, using Jensen's inequality we have that
\[
\mathbb E \left[  e^{a \int_0^T \pi_t(\lambda)\, dt} \right] \le \mathbb E \left[  e^{a T \pi_\tau(\lambda)} \right] \le \mathbb E \left[ \pi_\tau( e^{a T \lambda} )  \right] = \mathbb E \left[ e^{a T \lambda_\tau}  \right] <+\infty.
\]

The last inequality is implied by the fact that $\tau \leq T$ and so the following inequalities hold
\[
\begin{split}
\mathbb E \left[ e^{a T \lambda_\tau}  \right] & =  \mathbb E^{\Q} \left[ L_T  e^{a T \lambda_\tau}  \right]  \le C_1 \mathbb E^{\Q} \left[   e^{a T \lambda_\tau}  e^{ \int_0^T  \ln (\lambda_{s^-}) d N_s^{(1)} }  \right] \\
& \le C_1 \ \mathbb E^{\Q} \left[   e^{C_2 \left( \sum_{j=1}^{N^\a_T}  \ell(Z^\a_j) + \sum_{j=1}^{N^\b_T}  Z^\b_j \right)}  e^{  \int_0^T  \ln (\lambda_{s^-}) d N_s^{(1)} } \right] \end{split}
\]
for suitable constants $C_i>0$, $i=1,2$, and we can prove the finiteness by doing the same computations to prove that  $\mathbb E \left[  e^{a \int_0^T \lambda_s ds} \right]  < + \infty$.
\end{proof}

Based on the previous Lemma, we conclude this section proving the useful result given in Lemma \ref{lemma:expCfinito}, i.e. for every $a>0$
\[
\mathbb{E}[e^{a C_T}] < +\infty.
\]
\begin{proof}[\textbf{Proof of Lemma \ref{lemma:expCfinito}}]
We have that for a suitable constant $\kappa>0$, passing under $\Q$ via the Radon-Nikodym derivative $L_T$ given in Equation \eqref{eqn:L} and using Lemma \ref{lemma:exp_predictable2},
\begin{align*}
\mathbb{E}[e^{a C_T}]
	&= \mathbb E^{\Q} \left[ e^{-\int_0^T(\lambda_t-1)\,dt + \int_0^T \ln \lambda_{t-}\,dN^\a_t}
		e^{\int_0^T\int_0^{+\infty} az\,m^\a(dt,dz)} \right] \le \kappa \, \mathbb E^{\Q} \left[ e^{ \int_0^T\int_0^{+\infty}(\ln \lambda_{t-}+az)\,m^\a(dt,dz) } \right] \\
	&= \kappa \, \mathbb E^{\Q} \left[ e^{ \int_0^T\int_0^{+\infty}( e^{\ln \lambda_{t-}+az}-1)\,F^\a(dz)\,dt } \right] = \kappa\, \mathbb E^{\Q} \left[ e^{ \int_0^T \lambda_{t-} ( \mathbb{E}[e^{aZ^\a}] -1)\,dt } \right] <+\infty,
\end{align*}
where the finiteness comes from Equation \eqref{eq:finite_exp_Q_intL} and Assumption \ref{ass_app_premium} $i)$.
\end{proof}


\section{Proof of Theorem \ref{ExUn}}\label{AppendixC}
\begin{proof}
In order to apply Papapantoleon, Possamai and Saplaouras \cite[Theorem 3.5]{Papa_Possa_Sapla2018} we start by verifying that the BSDE data are standard under $\widehat \beta$, i.e., that assumptions $\mathbf{(F1)} - \mathbf{(F5)}$ therein are satisfied for a $\widehat \beta >0$. We will show that in our setting any $\widehat \beta >0$ works fine (see $\mathbf{(F4)}$ below).
\begin{itemize}
\item[$\mathbf{(F1)}$] The process $\{\widetilde{C}_t, t\in[0,T]\}$, with
$\widetilde{C}_t = \int_0^t\int_0^{+\infty} z \widetilde m^\a(\ud s, \ud z$
is a $(\P, \bH)$-martingale because of Remark \ref{rem:Hmg}. Notice that $\widetilde{C}$  is a pure-jump martingale, since the Brownian part is absent. Moreover,
\[
\begin{split}
\mathbb{E}[ \widetilde{C}_t^2] = \mathbb{E}\biggl[ \int_0^t\int_0^{+\infty} z^2 \pi _{s_{-} } (\lambda ) F^{(1)} (dz) \biggr] =  \mathbb{E} \left[  (Z^\a)^2 \right]  \mathbb{E} \left [\int_0^t  \pi _{s_{-} } (\lambda ) \ud s \right ],
\end{split}
\]
which is finite for every $t\in[0,T]$ according to Remark \ref{nuovo}. Hence $\sup_{t\in[0,T]} \mathbb{E}[ \widetilde{C}_t^2] <+\infty$ and Papapantoleon, A., Possamai, D. and Saplaouras \cite[Assumption 2.10]{Papa_Possa_Sapla2018} is satisfied.  In particular, the disintegration property is fulfilled with the transition kernel $K^{\omega}$ on $(\Omega\times[0,T], \mathcal{P})$ (here $\mathcal{P}$ denotes the $\mathbb{H}$-predictable sigma-field on $\Omega\times[0,T]$)
\begin{equation}\label{eq:trans_ker}
K_t^{\omega}(dz) = \pi_{t^-} (\lambda) F^\a(\ud z) .
\end{equation}
\item[$\mathbf{(F2)}$] Lemma \ref{lemma:expCfinito} guarantees that the terminal condition $\xi = e^{- \eta X^N_T}$ has finite moments of any order. See also $\mathbf{(F4)}$ below for additional details.
\item[$\mathbf{(F3)}$] 
We need to prove that the generator  $f$  satisfies a stochastic Lipschitz condition, i.e., there exist two positive $\mathbb{H}$-predictable processes $\gamma, \bar{\gamma}$ such that on $\mathbb{M}$
\begin{equation}\label{eqn;stochlip}
\bigg| f(t, \omega, y, \theta  ( \cdot ) ) - f(t, \omega, y', \theta' (\cdot ) ) \bigg| ^2
	\leq \gamma_t (\omega ) |y - y'|^2
	+\bar{\gamma}_t (\omega ) \left( ||| \theta ( \cdot ) - \theta' ( \cdot ) |||_t (\omega ) \right)^2 ,
\end{equation}
where:
\begin{equation}
\left( ||| \theta ( \cdot ) |||_t (\omega ) \right)^2 = \int_0^{+\infty} \theta^2(z) K_t^{\omega}(dz) \ge0 .
\end{equation}
Exploiting the definition of $f$ in Equation \eqref{eq:BSDE_gen}, we need first of all to deal with the $\esssup$:
\[
\bigl| f(t, \omega, y, \theta  ( \cdot ) ) - f(t, \omega, y', \theta' (\cdot )) \bigr| ^2
	\leq \left( \esssup_{u \in U} \bigl| \widetilde{f} ( t, \omega, y, \theta  ( \cdot ), u ) -  \widetilde{f} ( t, \omega, y', \theta'  ( \cdot ), u ) \bigr| \right)^2,
\]
and we preliminarly work on the absolute value difference involving $\widetilde{f}$:
\[
\begin{split}
& \bigl| \widetilde{f} ( t, \omega, y, \theta  ( \cdot ), u ) -  \widetilde{f} ( t, \omega, y', \theta'  ( \cdot ), u ) \bigr| \\
	&= \left| (y- y') \eta e^{r (T-t)} q^u_t(\omega) + \int_{0}^{+\infty} \left( y- y' + \theta(z) - \theta'(z)\right) \left( e^{-\eta e^{r(T-t)}(z-\Phi(z,u))} - 1 \right) K_t^{\omega}(dz) \right| \\
	&\le \bigl| y- y' \bigr| \eta e^{r (T-t)} q^{u_M}_t (\omega)
	+  \int_{0}^{+\infty} \left| y- y' \right| K_t^{\omega} (dz)
	+  \int_{0}^{+\infty} \left| \theta(z) - \theta'(z) \right| K_t^{\omega} (dz) \\
\end{split}
\]
where we have used the boundedness of $| e^{-\eta e^{R(T-t)}(z-\Phi(z,u))} - 1 |$ and that  $q_t^u \leq q^{u_M}_t$ for any $u \in U$. Now, since the inequality above does not depend on $u$ we also have that the $\esssup_{u \in U}$  satisfies it and we can take its square  (we use here the trivial relation $ (a+ b+c)^2 \leq 3 (a^2 + b^2 + c^2)$), finding:
\[
\begin{split}
& \left( \esssup_{u \in U} \bigl| \widetilde{f} ( t, \omega, y, \theta  ( \cdot ), u ) -  \widetilde{f} ( t, \omega, y', \theta'  ( \cdot ), u ) \bigr| \right)^2 \\
	&\le 3 \bigl| y- y' \bigr|^2 \eta^2 e^{2r (T-t)} (q^{u_M}_t (\omega))^2
	+ 3\left( \int_{0}^{+\infty} \left| y- y' \right| K_t^{\omega} (dz) \right)^2
	+ 3 \left( \int_{0}^{+\infty} \left| \theta(z) - \theta'(z) \right| K_t^{\omega} (dz) \right)^2. \\
\end{split}
\]
Recalling now that the transition kernel reads $K_t^\omega(dz) = \pi_{t^-} (\lambda) F^\a(\ud z)$ we use the following, for an integrable function $\vartheta$:
\begin{equation*}
\left( \int_0^{+ \infty} |\vartheta(\omega , z)| \pi_{t^-} (\lambda) F^\a(\ud z) \right)^2 \leq \int_0^{+\infty} |\vartheta(\omega , z)|^2 \pi_{t^-}^2 (\lambda) F^\a(\ud z) \cdot \underbrace{ \int_0^{+ \infty}  F^\a(\ud z)}_{=1} .
\end{equation*}
So, we find:
\[
\begin{split}
& \left( \esssup_{u \in U} \bigl| \widetilde{f} ( t, \omega, y, \theta  ( \cdot ), u ) -  \widetilde{f} ( t, \omega, y', \theta'  ( \cdot ), u ) \bigr| \right)^2 \\
	&\le 3\bigl| y- y' \bigr|^2 \eta^2 e^{2r (T-t)} (q^{u_M}_t (\omega))^2
	+ 3 \left| y- y' \right|^2 \pi_{t^-}^2 (\lambda)
	+ 3 \int_{0}^{+\infty} \left| \theta(z) - \theta'(z) \right|^2 \pi_{t^-} (\lambda) K_t^\omega(dz) \\
	&= 3\bigl| y- y' \bigr|^2 \left( \eta^2 e^{2r (T-t)} \left(q^{u_M}_t (\omega) \right)^2 +  \pi_{t^-}^2 (\lambda) \right)
	+ 3 \int_{0}^{+ \infty} \left| \theta(z) - \theta'(z) \right|^2 \pi_{t^-} (\lambda) K_t^\omega(dz) .
\end{split}
\]
So, the target, being Equation \eqref{eqn;stochlip}, is reached and we have the following values for the stochastic Lipschitz coefficients $\gamma_t$ and $\bar \gamma_t$:
\begin{align*}
\gamma_t &= 3 \eta^2 e^{2r (T-t)} ( q^{u_M}_t )^2 + 3 \pi_{t^-}^2 (\lambda) , \\
\bar{\gamma}_t &= 3 \pi_{t^-} (\lambda),
\end{align*}
which, as expected, are independent of the control $u$.
\item[$\mathbf{(F4)}$] Since by definition $ \alpha _{\cdot} ^2 = \max \{ \sqrt{\gamma_\cdot}, \bar \gamma_{\cdot} \}$, here we find:
$$
\alpha_s^2 =  \max \left\{ \sqrt{3 \eta^2 e^{2r (T-s)} ( q^{u_M}_s )^2 + 3 \pi_{s^-}^2 (\lambda)}, 3\pi_{s^-} (\lambda)\right\}
$$
and also $A_t = \int_{0}^{t} \alpha _{s}^2 \,ds $, so that we can easily verify that the inequality $\Delta A_t \leq \Phi, \P-$a.s. holds true for any $\Phi >0$ since $A$ has no jumps.
Notice that $\mathbf{(F2)}$ requires that the terminal condition $\xi = e^{- \eta X^T_N}$ belongs to the set of  $\mathcal H_T-$measurable random variables such that $\mathbb E \left[ e^{\widehat \beta A_T} e^{- 2 \eta X^N_T} \right] < \infty$, for some $\widehat \beta > 0$.
This is true for any $\widehat \beta >0$, since $\alpha_s^2 \le \sqrt 3 \eta e^{r (T-s)}  q^{u_M}_s  + 3 \pi_{s^-} (\lambda)$ and so
\begin{eqnarray*}
\mathbb E \left[ e^{\widehat \beta A_T} e^{- 2 \eta X^N_T} \right] &\le& \mathbb E \left[ e^{\widehat \beta \sqrt 3 \eta  \int_{0}^{T}  e^{r (T-s)}  q^{u_M}_s ds} e^{ 3 \widehat \beta \int_0^T  \pi_{s^-} (\lambda) ds } e^{- 2 \eta X^N_T} \right],
\end{eqnarray*}
which is finite for any $\widehat \beta >0$ thanks to Assumption \ref{ass_app_premium} $(ii)$ (see also Lemma \ref{lem:finite}).
\item[$\mathbf{(F5)}$] Finally, by using the same $\widehat\beta >0$ and $A$ introduced to prove $\mathbf{(F4)}$, we find:
\begin{equation}
\mathbb{E} \left[  \int_0^T e^{\widehat\beta A_t} \frac{|f (t, 0, 0, 0) |^2}{\alpha_t^2}  dt \right] < \infty,
\end{equation}
since here $f (t,0, 0, 0) = - \esssup_{u \in \mathcal{U}} \widetilde{f} (t, 0, 0, u_t) = 0$.
\end{itemize}
It now remains to prove that the quantity
$$
M^{\Phi}(\widehat \beta) = \frac{9}{\widehat\beta} + \frac{\Phi^2 (2 + 9 \widehat \beta)}{\sqrt{\widehat \beta^2 \Phi^2 +4} -2} \exp{\left( \frac{\widehat \beta \Phi + 2 -\sqrt{\widehat \beta^2 \Phi^2 +4}}{2} \right)}
$$
with $\Phi>0$ introduced in $\mathbf{(F4)}$ and $\widehat \beta >0$, satisfies $M^{\Phi}(\widehat \beta) < \frac12$.
Thanks to Papapantoleon, Possamai and Saplaouras \cite[Lemma 3.4]{Papa_Possa_Sapla2018}, for $\widehat \beta$ sufficiently large, we know that since $\lim_{\widehat \beta \rightarrow \infty} M^\Phi(\widehat \beta) = 9 e \Phi$ then it suffices to take $\Phi < \frac{1}{18 e}$.

It remains to show that $(Y,\Theta^{Y}) \in \mathcal{L}^2 \times \widehat{\mathcal{L}}^2$. According to Papapantoleon, Possamai and Saplaouras \cite[Theorem 3.5]{Papa_Possa_Sapla2018}, we know that
\[
\mathbb{E}\left[ \int_0^T e^{\widehat\beta A_t} \alpha_t^2 \abs{Y_t}^2 \,dt\right] <+\infty
\]
and we also notice that $\alpha^2_t \ge 3 \pi_{t^-} (\lambda) \ge 3 \min\{ \lambda_0, \beta \}$ and this implies $ \mathbb{E}\left[ \int_0^T e^{\widehat\beta A_t} \abs{Y_t}^2 \,dt\right] <+\infty$ and therefore $Y \in \mathcal{L}^2$. The same argument applies to prove that $\Theta^{Y}\in\widehat{\mathcal{L}}^2$.
\end{proof}

 \end{document}